\newtheorem{theorem}{Theorem}[section] 
\newtheorem{lemma}[theorem]{Lemma}
\newtheorem{proposition}[theorem]{Proposition} 
\newtheorem{remark}[theorem]{Remark} 
\numberwithin{equation}{section}
\renewcommand{\ge}{\geqslant}
\renewcommand{\le}{\leqslant} 
\renewcommand{\geq}{\geqslant}
\renewcommand{\leq}{\leqslant}
\newcommand{\vare}{\varepsilon}
\newcommand{\laplace}{\Delta}
\newcommand{\surflaplace}{\Delta_B}
\newcommand{\dive}{\operatorname{div}}  
\newcommand{\tandive}{\dive_{\sigma}}
\newcommand{\curl}{\operatorname{curl} }
\newcommand{\trace}{\operatorname{Tr} }
\newcommand{\connec}{\hat{\nabla} }
\newcommand{\tangrad}{\bar{\nabla} }
\newcommand{\matder}{\mathcal{D}_t }
\newcommand{\energy}{e}
\newcommand{\meancurv}{\mathcal{A}}
\newcommand{\barenergy}{\bar{\energy}}
\newcommand{\barE}{\bar{E}}
\newcommand{\parOmega}{\Gamma}
\newcommand{\tinu}{\tilde{\nu}}
\newcommand{\Rdiv}{\uppercase\expandafter{\romannumeral1}}
\newcommand{\Rbulk}{\uppercase\expandafter{\romannumeral2}}
\newcommand{\radi}{\mathcal{R}}
\newcommand{\N}{\mathbb{N}}
\newcommand{\Nz}{\N_0}
\begin{document}
\title[Incompressible Ideal MHD with Surface Tension]{A priori estimates and a blow-up criterion for the incompressible ideal  MHD equations with surface tension and a closed free surface}
\author{Chengchun Hao \and Siqi Yang}
\address{Chengchun Hao and Siqi Yang\newline
State Key Laboratory of Mathematical Sciences, Academy of Mathematics and Systems Science, Chinese Academy of Sciences, Beijing 100190, China\newline
\and
School of Mathematical Sciences, University of Chinese Academy of Sciences, Beijing 100049, China
}
\email{hcc@amss.ac.cn} 
\email{yangsiqi@amss.ac.cn}
\begin{abstract} 
We establish the a priori estimates and prove a blow-up criterion for the three-dimensional free boundary incompressible ideal magnetohydrodynamics equations. The fluid occupies a bounded region with a free boundary that is a closed surface, without assumptions of simple connectedness or periodicity of the region (thus, Fourier transforms cannot be applied), nor the graph assumption for the free boundary. The fluid is under the influence of surface tension, and flattening the boundaries using local coordinates is insufficient to resolve this problem. This is because local coordinates fail to preserve curvature, as the mean curvature of a flat boundary degenerates to zero. To address these challenges and circumvent the intricate issue of spatial regularity in Lagrangian coordinates, we utilize reference surfaces to represent the free boundary and develop new energy functionals that both preserve the material derivative and incorporate spatial-temporal scaling $\partial_t \sim \nabla^{\frac{3}{2}}$ in Eulerian coordinates. This method enables us to establish both low-order and high-order regularity estimates without any loss of regularity. More importantly, we prove a blow-up criterion and provide a complete classification of blow-ups, including the self-intersection of the free boundary (which the graph assumption cannot handle), the breakdown of the mean curvature, and the blow-up of the normal velocity (which Lagrangian coordinates fail to capture). To the best of our knowledge, this is the first result addressing the a priori estimates and the blow-up criterion for free boundary problems with surface tension in general regions.
\end{abstract}

\keywords{free boundary problem, incompressible magnetohydrodynamics, blow-up, local regularity, surface tension}
\subjclass[2020]{35Q35, 35R35, 35B44, 76B03, 76W05}
\maketitle
\tableofcontents

\section{Introduction}\label{s:int}
We consider the three-dimensional free boundary incompressible ideal magnetohydrodynamics (MHD) equations with surface tension in a bounded domain:
\begin{equation}\label{e:mhd}
\begin{cases}
\matder v+\nabla p=H\cdot \nabla H, & \text{ in } \Omega_t, \\
\matder H=H\cdot \nabla v,  & \text{ in } \Omega_t, \\ 
\dive v=0,\quad \dive H=0, & \text{ in } \Omega_t, \\
H\cdot \nu=0,\quad  
p=\meancurv_{\parOmega_t},\quad v_n=V_{\parOmega_t},  & \text{ on }  \parOmega_t, \\ 
v(0, \cdot)=v_0, \quad H(0, \cdot)=H_0, & \text{ in } \Omega_0,
\end{cases}
\end{equation}
where $t$ represents the time, $v$ the velocity, $\matder\coloneqq \partial_t+v\cdot\nabla$ the material derivative, $H$ the magnetic field, and $p$ the scalar total pressure. The moving domain $\Omega_t\subset\mathbb{R}^3$ is bounded with a closed surface $\parOmega_t\coloneqq \partial\Omega_t$. $\nu$ denotes the unit outer normal, $\meancurv_{\parOmega_t}$ the mean curvature, and $V_{\parOmega_t}$ the normal velocity of $\parOmega_t$, which is equal to the normal component of the velocity $v_n\coloneqq v\cdot\nu$. We specify the initial data $v_0,H_0$ and $\Omega_0$, denoting $\parOmega_0\coloneqq \partial\Omega_0$. Additionally, the coefficient of surface tension is assumed to be $1$ for simplicity.

In this paper, we establish a priori estimates and present a complete classification of the blow-up behavior for system \eqref{e:mhd} in Sobolev spaces. To ensure the generality of our results, we impose no additional assumptions on the fluid region or the free boundary.
\subsection{Energy functionals preserving the material derivative in Eulerian coordinates}
Our analysis relies crucially on the new energy functionals constructed below in the Eulerian coordinates. 
For any integer $l\ge 1$, we define
\begin{align}
\energy_l(t)\coloneqq {}& \frac{1}{2} \left(\|\matder^{l+1}v\|_{L^2(\Omega_t)}^2+\|\matder^{l+1}H\|_{L^2(\Omega_t)}^2+\|\tangrad(\matder^l v\cdot\nu)\|_{L^2(\parOmega_t)}^2\right) \nonumber\\
&+\frac{1}{2}\left(\|\nabla^{\lfloor \frac{3l+1}{2}\rfloor} \curl v\|_{L^2(\Omega_t)}^2+\|\nabla^{\lfloor \frac{3l+1}{2}\rfloor} \curl H\|_{L^2(\Omega_t)}^2\right), \label{e:el(t)}
\end{align}
and we define the lower-order energy as $\barenergy(t)=\energy_1(t)+\energy_2(t)+\energy_3(t)$, while the case $l \geq 4$ corresponds to the higher-order energy. In \eqref{e:el(t)}, $\lfloor\cdot\rfloor$ represents the integer part of a given number, $\tangrad$ denotes the tangential derivative, and $\curl F=\nabla F-(\nabla F)^\top$ applies to a vector field $F$. Additionally, we introduce the following energy functional:
\begin{align}
E_l(t)\coloneqq {}&\sum_{k=0}^{l}\left(\|\matder^{l+1-k}v\|_{H^{\frac 32 k}(\Omega_t)}^2+\|\matder^{l+1-k}H\|_{H^{\frac 32 k}(\Omega_t)}^2\right) \nonumber\\
&+\| v\|_{H^{\lfloor\frac{3l+3}{2}\rfloor}(\Omega_t)}^2+\| H\|_{H^{\lfloor\frac{3l+3}{2}\rfloor}(\Omega_t)}^2+\|\tangrad(\matder^l v\cdot\nu)\|_{L^2(\parOmega_t)}^2+1,\quad l\ge 1,\label{e:El(t)}
\end{align}
where we take into account the spatial-temporal regularity. As before, the lower-order energy 
\begin{equation*}
\barE(t)\coloneqq\sum_{k=0}^{4}\left(\|\matder^{4-k}v\|_{H^{\frac{3}{2}k}(\Omega_t)}^2+\|\matder^{4-k}H\|_{H^{\frac{3}{2}k}(\Omega_t)}^2\right)+\sum_{k=1}^{3}\|\tangrad(\matder^k v\cdot\nu)\|_{L^2(\parOmega_t)}^2+1, 
\end{equation*}
and we observe that $C_1(E_1+E_2+E_3)\le  \barE\le C_2(E_1+E_2+E_3)$ for some constants $C_1,C_2>0$.  
\subsubsection*{The principle of reducing derivatives}
The scaling $3/2$ in \eqref{e:El(t)} is revealed in \cite{Shatah2008a} that a second-order time derivative can be roughly equated to a third-order spatial differentiation, indicating the regularizing effect of the surface tension. From system \eqref{e:mhd}, this scaling suggests that we can reduce ``$1/2$-order'' spatial regularity by substituting $\matder v=-\nabla p+H\cdot\nabla H$ or $\matder H=H\cdot\nabla v$. In this sense, we can also reduce ``$1/2$-order'' spatial regularity when the operators $\matder$ and $\curl$ are combined (cf. Lemma \ref{l:curl Dt}). These observations are crucial in deriving the optimal expressions for $\dive\matder^l v,\curl\matder^l v$, the error terms, etc. (see, e.g., Lemmas \ref{l:nab^2H,H} and \ref{l:formu 5}) which allow us to 
control the higher-order energy (cf. Lemma \ref{p:ene est 2}). 

This principle will be consistently used throughout the paper.  
\subsection{Representation of the free boundary and the a priori assumptions}\label{s:RFB}
Let $(v,H,p,\Omega_t)$ be any solution to system \eqref{e:mhd} on $\left[0,T_0\right)$ for some $T_0>0$. We choose a smooth, compact reference surface $\Gamma$ to  represent the free boundary. Here, $\Gamma=\partial\Omega$, where $\Omega$ is a smooth, compact domain satisfying the uniform interior and exterior ball condition with radius $\radi=\radi(\Omega)>0$. 

The free boundary is represented as:
\begin{equation*}
\parOmega_t=\{x+h(x,t)\nu_{\Gamma}(x):x\in\Gamma\},\quad t\in \left[0,T\right),
\end{equation*} 
where the time $T\le T_0$ and the height function $h:\Gamma\times\left[0,T\right)\to\mathbb{R}$ are characterized as follows:
\begin{equation}\label{e:MT}
\mathcal{M}_T\coloneqq \radi-\sup_{0\le t< T}\|h(\cdot,t)\|_{L^\infty(\Gamma)}>0.
\end{equation}
In other words, $h(\cdot,t)$ is well-defined in $\left[0,T\right)$ as long as $\mathcal{M}_T>0$. The maximal representation interval $\left[0,T_r\right)$ for the reference surface $\parOmega$ is defined as $T_r=\sup\{ T\le T_0: \text{\eqref{e:MT} holds} \}$. It should be noted that one of the following three scenarios will occur as time approaches $T_r$.  
\begin{enumerate}[label={\rm (\arabic*)}]
\item The free boundary $\parOmega_t$ first self-intersects at time $T_r$ ($T_r<T_0$ or $T_r=T_0$), resulting in a splash or splat singularity (see, e.g., \cite{Coutand2014}). That is, $\radi(\Omega_{t})>0$ for $0\le t<T_r$ and $\radi(\Omega_{T_r})=0$.
\item $T_r=T_0$ and $\parOmega_t$ does not self-intersect on $\left[0,T_0\right)$. In this scenario, we complete the representation of the free boundary throughout the existence of the solution.
\item $T_r<T_0$ and $\parOmega_t$ does not self-intersect on $\left[0,T_r\right)$. In this case, our reference surface is insufficient to represent the free boundary at time $T_r$, necessitating a switch to a new reference surface to continue the representation. 
\end{enumerate}  
 
Having defined $\mathcal{M}_T$ to ensure the well-definedness of the height function, we introduce the following quantity to ensure the extension of the solution
\begin{equation}\label{e:NT}
\mathcal{N}_T \coloneqq \sup_{0\le t< T}(\|h(\cdot,t)\|_{H^{3+\delta}(\Gamma)}+\|\nabla v\|_{H^{3}(\Omega_t)}+\|\nabla H\|_{H^{3}(\Omega_t)}+\|v_n\|_{H^{4}(\parOmega_t)}),
\end{equation}  
where $\delta>0$ is a sufficiently small constant and $T\le T_0$. 

We mention that the requirements for the height function and the normal velocity are natural, as we do not fix the boundary using Lagrangian coordinates. 
These two parts precisely control the spatial and temporal regularity of the free boundary:
\begin{enumerate}[label={\rm (\arabic*)}]
\item $\|h\|_{H^{3+\delta}(\Gamma)}$ controls the tangential derivative of the height function. It also ensures that the second fundamental form $B_{\parOmega_t}$ is uniformly bounded, i.e., $\|B_{\parOmega_t}\|_{L^\infty(\parOmega_t)}\le C$. 
\item Note that $\partial_t h=v_n$, and therefore $\|v_n\|_{H^{4}(\parOmega_t)}$ controls the time derivative of the height function. 
\end{enumerate} 
Moreover, $\|v\|_{L^2(\Omega_t)}$ and $\|H\|_{L^2(\Omega_t)}$ are not included, due to the energy conservation of system \eqref{e:mhd}.   
\subsection{Main results} 
We make the following assumptions on the initial data throughout the paper. Let $v_0,H_0\in H^6(\Omega_0;\mathbb{R}^3)$ be the initial divergence-free velocity and magnetic fields, satisfying $H_0\cdot\nu_{\parOmega_0}=0$ on $\parOmega_0$, where $\Omega_0$ is the initial bounded domain, and the initial boundary $\parOmega_0\in H^7$ is a non-self-intersecting closed surface.
As discussed in Section \ref{s:RFB}, we can choose a suitable reference surface $\parOmega=\partial\Omega$ with $\radi=\radi(\Omega)>0$, and represent the free boundary. In particular, $\parOmega_0=\{ x+h_0(x)\nu_\Gamma (x):x\in \Gamma \}$, where $\|h_0\|_{L^\infty(\Gamma)}< \radi$. 

Our main results are stated as follows. 

\begin{theorem}\label{t:main t}
Let $(v,H,\Omega_t)$ be any solution to system \eqref{e:mhd} on $ \left[0, T\right)$ for some $T>0$ with initial data $(v_0,H_0,\Omega_0)$, and satisfies the following the a priori assumptions:
\begin{equation}\label{e:a priori}
\mathcal{N}_T<\infty \text{, and } \mathcal{M}_T>0.
\end{equation}

Then, we have the following results:
\begin{enumerate}[label={\rm (\arabic*)}]
\item Lower-order quantitative regularity estimates: 
\begin{equation}\label{e_t1_1}
\sup_{0\le t< T}\left( \barE(t)+\sum_{k=0}^{3}\|\matder^{3-k}p\|_{H^{\frac{3}{2}k+1}(\Omega_t)}^2+\|B_{\parOmega_t}\|_{H^5(\parOmega_t)}^2 \right) \leq \bar{C},
\end{equation}
where $\bar{C}$ is a constant that depends only on $T,\mathcal{N}_T,  \mathcal{M}_T,\|v_0\|_{H^6 (\Omega_0)},\|H_0\|_{H^6 (\Omega_0)}$ and $\|\meancurv_{\parOmega_0}\|_{H^5(\parOmega_0)}$.  Specifically, the following holds:
\begin{equation}\label{e_t1_2}
\sup_{0\le t< T}\left[ \sum_{k=0}^{4}\left( \|\partial_t^{4-k} v\|_{H^{\frac{3}{2}k}(\Omega_t)}^2+\|\partial_t^{4-k} H\|_{H^{\frac{3}{2}k}(\Omega_t)}^2\right)+\sum_{k=0}^{3}  \|\partial_t^{3-k} p\|_{H^{\frac{3}{2}k+1}(\Omega_t)}^2 \right]  \leq \bar{C},
\end{equation}
where the constant $\bar{C}$ depends on the same quantities as in \eqref{e_t1_1}.
\item Higher-order regularity estimates for $l\ge 4$: 
\begin{equation}\label{e_t1_3}
\sup _{0\le t< T}E_l(t) \le C_l,
\end{equation}
where $C_l$ is a constant that depends on $l, T, \mathcal{N}_T, \mathcal{M}_T$, and $E_l(0)$. 
In particular, we have
\begin{align}
\sup_{0\le t< T}\bigg[&\sum_{k=0}^{l}\left(\|\partial_t^{l+1-k}v\|_{H^{\frac{3}{2}k}(\Omega_t)}^2+\|\partial_t^{l+1-k}H\|_{H^{\frac{3}{2}k}(\Omega_t)}^2+\|\partial_t^{l-k}p\|_{H^{\frac{3}{2}k+1}(\Omega_t)}^2\right)\nonumber \\
&+\| v\|_{H^{\lfloor\frac{3(l+1)}{2}\rfloor}(\Omega_t)}^2+\| H\|_{H^{\lfloor\frac{3(l+1)}{2}\rfloor}(\Omega_t)}^2+\|B_{\parOmega_t}\|_{H^{\frac{3l+1}{2}}(\parOmega_t)}^2\bigg]\le C_l,\label{e_t1_4}
\end{align} 
where the constant $C_l$ depends on the same quantities as in \eqref{e_t1_3}.
\item There exists a time $T_0>0$ depending only on the initial quantities $\mathcal{M}_0, \|v_0\|_{H^6(\Omega_0)},\|H_0\|_{H^6(\Omega_0)}$, and $\|\meancurv_{\parOmega_0}\|_{H^5(\parOmega_0)}$, such that the a priori assumptions \eqref{e:a priori} hold for $T=T_0$.
\end{enumerate}
Notably, if we consider a smooth solution on $\left[0,T\right)$, it will not develop singularities at time $T$ and remains smooth with respect to both time and space, as long as the a priori assumptions \eqref{e:a priori} hold.
\end{theorem}  

Next, we present the classification of blow-up for system \eqref{e:mhd}, which fully captures the scenario of boundary self-intersection.
\begin{theorem}\label{t:main t2}
For any solution $(v,H,\Omega_t)$ to system \eqref{e:mhd} with initial data $(v_0,H_0,\Omega_0)$, define the maximal time interval of existence $\left[0,T_*\right)$, where $T_*$ is the maximal time such that
\begin{equation*}
v,H\in C_t^0H^6(\Omega_t)\ \text{and}\ \parOmega_t\in C_t^0H^7. 
\end{equation*}
If the maximal time $T_*<\infty$, then one of the following scenarios must occur:
\begin{enumerate}[label={\rm (\arabic*)}]
\item The free boundary $\parOmega_t$ self-intersects for the first time at time $T_*$.
\item Either the mean curvature does not belong to the $H^{1+\delta}$-class, or the free boundary $\parOmega_t$ does not belong to the $H^{2+\vare}$-class at time $T_*$, for some sufficiently small positive constants $\delta$ and $\vare$.
\item The normal velocity of the free boundary $V_{\parOmega_t}$ does not belong to the $H^4$-class at time $T_*$.
\item The breakdown of lower-order quantities on $\Omega_t$, i.e., 
\begin{equation*}
\sup_{0\le t< T_*}
(\|\nabla v\|_{H^{3}(\Omega_t)}+\|\nabla H\|_{H^{3}(\Omega_t)})=\infty.
\end{equation*}
\end{enumerate} 
\end{theorem}  
\begin{remark}
We assume that the initial data $v_0,H_0\in H^6$ is due to the consideration of a general bounded domain with a closed free surface. For a periodic flat initial region (e.g., $\mathbb{T}^2\times (0,1)$), we expect that the similar results of Theorems \ref{t:main t} and \ref{t:main t2} hold for initial data in $H^{\frac 92}$, as we can define the fractional derivative using the Fourier transform in this case.
\end{remark}  
\subsection{History and background}
In recent decades, there has been significant interest in studying the free boundary incompressible Euler equations, and substantial advancements have been made. Extensive research has been conducted for the irrotational case, especially the water wave equations. We refer readers to \cite{Germain2012,Ionescu2015,Lannes2005,Wu2011} and the references therein. If the fluid flow exhibits vorticity, one may refer to \cite{Christodoulou2000,Coutand2007,Disconzi2019,Fefferman2016,Ginsberg2021,Lindblad2005,Luo2024,Schweizer2005,Shatah2008a,Wang2021a,Zhang2008} for results on the a priori estimates, the local well-posedness with or without surface tension, the zero surface tension limit, and more.

The investigation of free boundary problems for MHD equations has emerged relatively recently compared to the study of the Euler equations, mainly because of the strong interactions between the magnetic and velocity fields. We focus on the incompressible MHD equations. Hao and Luo \cite{Hao2014} obtained a priori estimates for free boundary problems of the incompressible ideal MHD  without surface tension under the Taylor-type sign condition. They considered the case where the initial domain is homeomorphic to a ball. They also showed the ill-posedness of the problem if the Taylor-type sign condition is violated in the two-dimensional case \cite{Hao2020}. Luo and Zhang \cite{Luo2020} derived a priori estimates for the lower regular initial data in the initial domain of sufficiently small volume. In \cite{Gu2019}, a local existence result was provided, with a detailed proof in an initial flat domain $\mathbb{T}^2\times (0,1)$. The local well-posedness for the incompressible ideal MHD equations with surface tension is established by Gu, Luo, and Zhang in \cite{Gu2023}, in the same initial domain setting, namely, $\mathbb{T}^2\times (0,1)$. The nonlinear stability of the current-vortex sheet in the incompressible MHD equations was solved by Sun, Wang and Zhang \cite{Sun2018} under the Syrovatskij stability condition, assuming that the free boundaries are graphs in $\mathbb{T}^2\times (-1,1)$. Wang and Xin \cite{Wang2021} established the global well-posedness of a free interface problem for the incompressible inviscid resistive MHD under similar assumptions regarding the graph. We also refer to some related works \cite{Fu2023,Hao2017,Hao2024,Lee2017,Sun2019,Trakhinin2009,Trakhinin2021} on the topics of the well-posedness, the current-vortex sheets problem, the breakdown criterion, the viscous splash singularity,  and the compressible MHD. 

It should be noted that the aforementioned well-posedness results for the incompressible MHD equations are primarily derived by applying the Lagrangian coordinates, which transform a moving domain into a fixed one. However, as indicated in \cite{Shatah2008a,Shatah2011}, the Lagrangian map lacks maximal regularity because all the variables are defined on an evolving domain. In fact, the moving surface can also be described using alternative methods, such as the study of the Euler equations with surface tension \cite{Schweizer2005}, the fluid interface problem \cite{Liu2023a,Shatah2011}, the surface diffusion flow with elasticity \cite{Fusco2020}, and the motion of charged liquid drop \cite{Julin2024}, among others. 

Moreover, previous results on the incompressible MHD equations with surface tension predominantly apply to the flat periodic initial region $\mathbb{T}^2\times (a,b)$ and rely on the graph assumption for the free boundary.  However, the periodic assumptions and the graph assumptions have inherent limitations. In fact, it may be possible to reduce the problem of a general free boundary to the case of a graph by selecting local coordinates. However, this reduction is technically complicated and involves significant challenges. In the presence of surface tension, if we only select a portion of the free boundary and flatten it near a point, there is a risk of losing certain geometric characteristics of the free boundary, such as the evolution of its curvature. For the fluid in the flat domain $\mathbb{T}^2\times (a,b)$, its initial mean curvature is evidently zero, as local coordinates fail to preserve the curvature. These facts highlight the necessity of making additional assumptions on the initial velocity on the boundary. For instance, in \cite{Luo2021}, the assumption $v_0\in H^{3.5}(\mathbb{T}^2\times (0,1))\cap H^4(\mathbb{T}^2\times \{1\})$ is made to obtain the a priori estimates; in \cite{Gu2023}, $v_0\in H^{4.5}(\mathbb{T}^2\times (0,1))\cap H^5(\mathbb{T}^2\times \{1\})$ is made to establish local existence. To the best of our knowledge, the local well-posedness for system \eqref{e:mhd} with surface tension remains open when $\Omega_t$ is a general bounded domain with a closed free surface. 

In this paper, by constructing new energy functionals with spatial-temporal scaling $\partial_t\sim\nabla^{\frac 32}$ in Eulerian coordinates, we establish the a priori estimates on the general domain without any loss of regularity. We also eliminate the additional regularity requirement for the velocity on the initial boundary \cite{Gu2023,Luo2021} and our results highlight the effectiveness of employing the height function on the reference surface to analyze the evolution of curvature. 

It is also natural and fundamentally important to consider the breakdown criterion of solutions to system \eqref{e:mhd}, for which we are unaware of any relevant rigorous studies, although a few studies are available if we neglect the surface tension. Fu, along with both authors and Zhang, established a Beale-Kato-Majda continuation criterion for solutions to the free boundary incompressible ideal MHD equations without surface tension \cite{Fu2023}. When the viscosity is taken into account, the authors proved the existence of finite-time splash singularities \cite{Hao2024}, while Hong, Luo, and Zhao also demonstrated the existence of such singularities \cite{Hong2025}. Recently, Ifrim et al. established a low-regularity blow-up criterion for the incompressible ideal MHD equations without surface tension \cite{Ifrim2024a}, inspired by the previous works \cite{Ifrim2023,Ifrim2024}.  

Based on the a priori estimates, we provide a complete classification of blow-up behavior for solutions to system \eqref{e:mhd}. 
In contrast to the graph assumption, which cannot capture non-graphical free boundaries, our method allows the analysis of free boundaries approaching self-intersection. Moreover, our energy functionals are defined in Eulerian coordinates, and the a priori assumptions—apart from the height function used to characterize the regularity of the boundary—are independent of the choice of coordinates. Therefore, our method remains unaffected by different coordinate choices as the free boundary approaches self-intersection.   
\subsection{Novelties and structure of the paper}
The novelties of this study are as follows.

To the best of our knowledge, Theorem \ref{t:main t} is the first result focusing on the regularity estimates of system \eqref{e:mhd} in a general bounded domain with a closed free surface, i.e., without imposing any periodicity or simple connectedness assumptions on the fluid region, or any graph assumptions on the free boundary. 
\begin{enumerate}
\item[a)] Our a priori estimates are derived from an energy inequality of the following form, based on the a priori assumptions, without requiring smallness in time. That is,
\begin{equation*}
\dot\barE\lesssim_{\mathcal{N}_T,\mathcal{M}_T}C(\text{initial data})\barE,\quad \dot E_l\lesssim_{\mathcal{N}_T,\mathcal{M}_T,\text{induction}}CE_l,\quad l\ge 4.
\end{equation*}
This is crucial for establishing a breakdown criterion \cite{Fu2023,Ifrim2023,Ifrim2024a,Ifrim2024,Julin2024,Luo2024,Wang2021a}. If additional smallness in time were required, we could not establish a blow-up criterion, let alone a complete classification of blow-up behavior. The common a priori estimates yield a polynomial of the energy, multiplied by time, such as $\sup_{[0,T]} \barE(t)\le C(\barE(0))+T^{\frac 12} \mathcal{P}(\sup_{[0,T]}\barE(t))$.
However, this inequality necessitates a sufficiently small time $T$ to complete the energy estimates, making the breakdown criterion unattainable.
\item [b)] Our lower-order regularity results \eqref{e_t1_1} extend the a priori estimates with an initial flat domain $\mathbb{T}^2\times (0,1)$ from \cite{Luo2021} to a general domain without any loss of regularity. Moreover, we eliminate the additional regularity requirement for the velocity on the initial boundary (which was assumed in \cite{Luo2021} as $v_0\in H^{3.5}(\mathbb{T}^2\times (0,1))\cap H^4(\mathbb{T}^2\times \{1\})$) since our final estimate does not depend on this initial quantity. We also establish higher-order energy estimates without any loss of regularity.
\item [c)] We establish a distinct energy functional that preserves the material derivative $\matder$ with a different spatial-temporal scaling ($\partial_t\sim\nabla^{\frac 32}$) in Eulerian coordinates, in contrast to the energy functional defined in the flat periodic domain using Lagrangian coordinates \cite{Gu2023,Luo2020,Luo2021}. This strategy avoids destroying the structure of system \eqref{e:mhd} when separating $\partial_t$ from $\matder$, and the energy estimates are driven by the second fundamental form and pressure. We also eliminate the additional regularity requirement for the velocity on the initial boundary as in \cite{Gu2023}, i.e., the assumption $v_0\in H^{4.5}(\mathbb{T}^2\times (0,1))\cap H^5(\mathbb{T}^2\times \{1\})$. 
\end{enumerate}

Theorem \ref{t:main t2} provides the first comprehensive classification of blow-ups for solutions of \eqref{e:mhd}.  
\begin{enumerate}
\item[a)] In our classification, the first three types of singularities arise from the free boundary and are mutually distinct. These singularities can be effectively characterized using the height function: (1), (2), and (3) in Theorem \ref{t:main t2} correspond to the inability to choose a reference surface to define the height function, the blow-up of the tangential derivative of the height function, and the blow-up of the time derivative of the height function, respectively. Therefore, each of these three types of singularities is indispensable.
\item[b)] The case where only the singularity in Theorem \ref{t:main t2} (1) arises, while the others in (2)–(4) do not occur, does exist. The singularity of boundary self-intersection, where the solution and free boundary remain smooth, exists in the presence of viscosity \cite{Hao2024,Hong2025}. For the free boundary incompressible ideal MHD equations with surface tension, it is conjectured in \cite{Coutand2014} that this singularity also exists.
\item[c)] If we consider the fixed boundary problem, our blow-up classification reduces to (4) in Theorem \ref{t:main t2}, analogous to the remarkable Beale-Kato-Majda criterion for the Euler equations \cite{Beale1984}.
\end{enumerate}

Our results hold without assuming that the free boundary is a graph. Analyzing the evolution of a small region by selecting a portion of the closed surface and applying local coordinate flattening is insufficient to solve the problem. Moreover, the strategy for selecting the reference surface provides the following advantages compared to the graph assumption.
\begin{enumerate} 
\item[a)] When the free boundary is represented by a graph function over the initial boundary $\mathbb{T}^2\sim\mathbb{T}^2\times \{1\}$, it corresponds to a specific height function. Choosing $\mathbb{T}^2\times \{1\}$ as the reference surface with $(0,0,1)$ as the unit outer normal, the height function coincides with the graph function.
\item[b)] The height function enables direct computation of curvature evolution via tangential derivatives, whereas flattening the surface with local coordinates fails to preserve the intrinsic geometric properties of the moving surface.
\item[c)] We can continually select appropriate reference surfaces to represent the free boundary, particularly facilitating the characterization of the process by which the boundary develops self-intersection. However, the graph function fails when the moving surface boundary undergoes turning (see, e.g., the breakdown criterion for the free boundary Euler equations with surface tension in \cite{Luo2024} and without surface tension in \cite{Wang2021a}). 
\end{enumerate}

We use reference surfaces to represent the free boundary, which offers advantages over fixing the boundary in Lagrangian coordinates for the following reasons.
\begin{enumerate} 
\item[a)] It is more convenient to control the mean curvature and boundary regularity using the height function, as the regularity improvement of the free boundary is geometric \cite{Shatah2008a}, directly connected to the regularity of the mean curvature (cf. Lemma \ref{l:bou reg}), and not entirely evident in the Lagrangian coordinates.  
\item[b)] We avoid addressing the issue of spatial regularity of the flow map in Lagrangian coordinates.
\item[c)] A more precise estimation of the pressure can be obtained by analyzing the normal velocity 
of the free boundary. In contrast, in Lagrangian coordinates, the normal velocity of the free boundary is implicit because the boundary is fixed.
\end{enumerate} 

The rest of this paper is organized as follows. In Section \ref{s:formu}, we calculate the commutators, the error terms, and additional terms to establish the energy estimates. In Section \ref{s:d/dt}, we compute the time derivative of the energy functional. In Section \ref{s:p est}, we will show that $\|p\|_{H^3(\Omega_t)}$ can be uniformly bounded within the time interval of existence. In Section \ref{s:est error}, we estimate the error terms that appeared in Section \ref{s:d/dt}. In Section \ref{s:energy est}, we close the energy estimates and prove our main theorems. Finally, in Section \ref{s:discuss}, we discuss the connection between the self-intersection and the curvature blow-up on the free boundary established in Theorem \ref{t:main t2}.
\section{Formulas for the energy estimates}\label{s:formu}

Throughout the paper, we will use the Einstein summation convention and the notation $S\star T$ from \cite{Hamilton1982} 
to denote a tensor formed by contracting certain indices of tensors $S$ and $T$ with constant coefficients. In particular, for $k,l\in\N=\{1,2,3,\cdots\}$ (we denote $\Nz =\{0,1,2,3,\cdots\}$), $\nabla^k f\star\nabla^l g$ represents a contraction of certain indices of tensors $\nabla^i f$ and $\nabla^j g$ for $0\le i\le k$ and $0\le j\le l$ with constant coefficients.  
Note that $f$ and $g$ can be vector fields, and we include the lower-order derivatives along with the function (or vector field) itself. However, we exclude the case of a single term $\nabla^i f$. Let $u:\parOmega \to \mathbb{R}$ and $F:\parOmega\to\mathbb{R}^3$ be a sufficiently regular function and vector field, respectively. Since the reference hypersurface $\parOmega$ (embedded in $\mathbb{R}^3$) has a natural metric $g$ induced by the Euclidean metric, $(\parOmega, g)$ is a Riemannian manifold with connection $\connec$. For a function $u\in C^\infty(\parOmega)$ and a vector field $F$, $\connec_F u=Fu$. 

We denote the normal part of $F$ by $F_n\coloneqq F\cdot\nu_\parOmega$, and the tangential part by  $F_\sigma\coloneqq F-F_n\nu_\parOmega$, where ``$\cdot$'' denotes the inner product. If $\parOmega$ is smooth, we can extend both $u$ and $F$ to $\mathbb{R}^3$ and define the tangential differential by $\tangrad u\coloneqq (\nabla u)_\sigma$, the tangential gradient of $F$ by
$\tangrad F\coloneqq \nabla F-(\nabla F\nu)\otimes\nu$, i.e., $ (\tangrad F)_{ij}=\partial_j F^i-\partial_lF^i\nu^l\nu_j$, and the tangential divergence by $\tandive F\coloneqq \trace(\tangrad F)$.     
The tangential gradient and covariant gradient are equivalent: for any vector field $\tilde{F}:\parOmega\to\mathbb{R}^3,\tilde{F}\cdot\nu=0$, we have $\connec_{\tilde{F}} u=\tangrad u\cdot\tilde{F}$.   
Additionally, the second fundamental form $B$ and the mean curvature $\meancurv$ can be written as $B=\tangrad \nu$ and $\meancurv=\tandive\nu$.
The Beltrami-Laplacian is defined by $\surflaplace u\coloneqq \tandive(\tangrad u)$, and it holds 
\begin{equation}\label{e:lap_B}
\surflaplace u=\laplace u-\left( \nabla^2u \nu \cdot \nu\right) -\meancurv \partial_\nu u,
\end{equation}
where $\partial_\nu$ denotes the outer normal derivative. We also recall the divergence theorem $\int_{\parOmega}\tandive FdS=\int_{\parOmega} \meancurv_\parOmega(F \cdot \nu_\parOmega)dS$,
and the differentiation formula (see, e.g., \cite{Shatah2008a}) 
\begin{equation}\label{e:reynold 2}
\frac{d}{dt}\int_{\parOmega_t}fdS=\int_{\parOmega_t}\matder f+f\tandive vdS.
\end{equation} 

We will fix our reference surface $\Gamma$, a boundary of a smooth, compact set $\Omega$ satisfying the uniform interior and exterior ball condition with radius $\radi>0$. We denote its tubular neighborhood $U(\radi,\Gamma)= \{ x \in \mathbb{R}^3: \operatorname{dist}(x,\Gamma) < \radi\}$.  We say that $\parOmega_t=\partial\Omega_t$ (or $\Omega_t$) is $H^s(\Gamma)$-regular, if $\parOmega_t=\{x+h(x,t)\nu_{\Gamma}(x):x\in\Gamma\}$, where $h(\cdot,t):\Gamma\to\mathbb{R}$ is $H^s(\Gamma)$-regular and $\|h(\cdot,t)\|_{L^\infty(\Gamma)}<\radi$. $\parOmega_t$ is called uniformly $H^s(\Gamma)$-regular if $\|h\|_{H^s(\Gamma)} \le C$ and $\|h\|_{L^\infty(\Gamma)} \leq c \radi$ for constants $C$ and $c <1$ (see \cite{Julin2024} for similar definitions).  We can express the unit outer normal and the second fundamental form by the height function (cf. \cite{Mantegazza2011})
\begin{equation}\label{e:nu by h} 
\nu_{\parOmega_t}=a_1\left( h(\cdot,t),\tangrad h(\cdot,t)\right),\quad  B_{\parOmega_t}=a_2\left( h(\cdot,t),\tangrad h(\cdot,t)\right) \tangrad^2 h(\cdot,t),
\end{equation} 
where $a_1,a_2\in C^\infty$.  We extend $\nu$ to $\Omega$ via harmonic extension and denote it as $\tilde{\nu}$. 
We sometimes still denote the extended one by $\nu$. From \eqref{e:a priori} and \eqref{e:nu by h}, $\|\tilde{\nu}\|_{H^{5/2+\delta}(\Omega_t)}\le C$ for $\delta>0$ small.

From the definition $\curl F=\nabla F-(\nabla F)^\top$, a straightforward calculation yields:
\begin{lemma}\label{l:formu 2}
Let $l,k\in\N, F$, and $G$ be smooth vector fields and $f$ be a smooth function. Then, we have: 
\begin{enumerate}[label={\rm (\arabic*)}]
\item $\curl (F\cdot\nabla G)=\nabla G\nabla F-\nabla F^{\top}\nabla G^{\top}+(F\cdot\nabla)\curl G$ and $[\matder, \curl]F=\nabla v^{\top}\nabla F^{\top}-\nabla F\nabla v$.
\item $[\matder^{l+1}, \nabla^k]f=\matder[\matder^{l},\nabla^k]f+[\matder,\nabla^k]\matder^{l}f$ and $[\matder^{l}, \nabla^{k+1}]f=[\matder^l,\nabla]\nabla^{k}f+\nabla[\matder^l,\nabla^{k}]f$. 
\end{enumerate}  
\end{lemma} 

To derive a general formula for the commutators, we apply the following results. It is easy to verify that $\matder a(\nu)=b(\nu)\tangrad v, 
\matder \nabla \matder^k v=\nabla \matder^{k+1}v+\nabla v\star \nabla \matder^k v,\matder \tangrad \matder^k v=\tangrad \matder^{k+1}v+\tangrad v\star \tangrad \matder^k v$ for $k\in \N $, where $a(\nu)$ and $b(\nu)$ denote the finite $\star$ product of $\nu$.  
\begin{lemma}\label{l:formu 3}
Let $l,k\in \N,l\ge 2$ and $k\ge 3$. Then, we have:
\begin{enumerate}[label={\rm (\arabic*)}]
\item $[\matder, \nabla^2] f =\nabla v\star \nabla^2f+\nabla^2 v\star\nabla f$.
\item $[\matder, \nabla^k] f=\sum_{|\alpha| \leq k-1}\nabla^{1+\alpha_1}  v  \star \nabla^{1+\alpha_{2}} f$.
\item $[\matder^l, \nabla] f=\sum_{2\le m\le l+1}\sum_{|\beta| \leq l+1-m} 
\nabla \matder^{\beta_1} v \star \cdots\star \nabla \matder^{\beta_{m-1}} v\star \nabla \matder^{\beta_{m}} f$.
\item $[\matder^l, \nabla^2] f =\sum_{2\le m\le l+1}\sum_{\substack{|\alpha|\le 1,|\beta| \leq l+1-m}}\nabla^{1+\alpha_1} \matder^{\beta_1} v \star \cdots\star \nabla^{1+\alpha_{m-1}} \matder^{\beta_{m-1}}v\star \nabla^{1+\alpha_{m}} \matder^{\beta_{m}} f$.
\end{enumerate} 
Roughly speaking, the leading term is $\nabla^k\matder^{l-1}$ in the commutator $[\matder^l,\nabla^k]$.
\end{lemma}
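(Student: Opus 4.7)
The plan is to establish the four identities by induction, invoking Lemma \ref{l:formu 1} for the base cases and the recursion identities from Lemma \ref{l:formu 2} at each inductive step. A general principle I would use throughout is that $\star$-products of the allowed shape are stable under both $\nabla$ and $\matder$: the Leibniz rule distributes a single $\nabla$ over a $\star$-product, while the elementary identities $\matder\nabla\matder^k v=\nabla\matder^{k+1}v+\nabla v\star\nabla\matder^k v$ and $\matder\nabla\matder^k f=\nabla\matder^{k+1}f+\nabla v\star\nabla\matder^k f$, together with $[\matder,\nabla]=-(\nabla v)^\top\nabla$, govern how $\matder$ interacts with every factor.

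For the first formula I would write $[\matder,\nabla^2]f=[\matder,\nabla]\nabla f+\nabla[\matder,\nabla]f$ via the second bullet of Lemma \ref{l:formu 2}. The base commutator $[\matder,\nabla]=-(\nabla v)^\top\nabla$ from Lemma \ref{l:formu 1} yields $\nabla v\star\nabla^2f$ for the first piece, while applying $\nabla$ to the same base commutator gives $\nabla^2v\star\nabla f+\nabla v\star\nabla^2f$ by Leibniz. Adding these produces the claimed expression. For the second formula I would induct on $k$: the case $k=1$ is the base commutator, and in the inductive step use $[\matder,\nabla^{k+1}]f=[\matder,\nabla]\nabla^k f+\nabla[\matder,\nabla^k]f$. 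The first piece contributes $\nabla v\star\nabla^{k+1}f$, fitting the summand with $\alpha_1=0$, $\alpha_2=k$; the second piece, after distributing $\nabla$ over each inductive summand $\nabla^{1+\alpha_1}v\star\nabla^{1+\alpha_2}f$, produces $\nabla^{2+\alpha_1}v\star\nabla^{1+\alpha_2}f+\nabla^{1+\alpha_1}v\star\nabla^{2+\alpha_2}f$, each of which is a summand allowed by $|\alpha|\le k$.

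For the third and fourth formulas I would induct on $l$, using the first bullet of Lemma \ref{l:formu 2}, namely $[\matder^{l+1},\nabla^k]f=\matder[\matder^l,\nabla^k]f+[\matder,\nabla^k]\matder^l f$. The second piece is controlled by the already proved second formula (or its $k=1$ instance), and contributes two-factor terms ($m=2$) with $\beta_1=0$, $\beta_2=l$. For the first piece I would apply $\matder$ to each summand of the inductive hypothesis and exploit the two elementary operations above: either $\matder$ lands on an existing factor $\nabla\matder^{\beta_j}v$ or $\nabla\matder^{\beta_j}f$, raising $\beta_j$ by one while leaving $m$ unchanged; or $\matder$ passes through a $\nabla$ via $[\matder,\nabla]$, introducing a fresh factor $\nabla v$ and raising $m$ by one while leaving $|\beta|$ unchanged. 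In both cases the constraint $m+|\beta|\le l+2$ matches the range $2\le m\le l+2$, $|\beta|\le l+2-m$ on the new left-hand side. The fourth formula follows by exactly the same argument once each factor carries an additional index $\alpha_j\in\{0,1\}$; alternatively it follows from applying $\nabla$ to the third formula and distributing via Leibniz, which naturally introduces the extra index.

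The main obstacle will be the combinatorial bookkeeping in the last two steps: one must check that every index pattern produced by $\matder$ or $\nabla$ acting on a $\star$-product of the allowed shape is still covered by the claimed sum, and that no term escapes the ranges $|\alpha|\le 1$ and $|\beta|\le l+1-m$. Since the $\star$-notation absorbs lower-order contractions and all numerical constants, this verification reduces to tracking the derivative budget on each factor, which is routine once the two elementary operations above are isolated.
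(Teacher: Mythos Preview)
Your proposal is correct and follows essentially the same approach as the paper: direct computation for $[\matder,\nabla^2]$, induction on $k$ via the recursion $[\matder,\nabla^{k+1}]=[\matder,\nabla]\nabla^k+\nabla[\matder,\nabla^k]$ for the second identity (which the paper simply cites from \cite{Julin2021}), and induction on $l$ via $[\matder^{l+1},\nabla^k]=\matder[\matder^l,\nabla^k]+[\matder,\nabla^k]\matder^l$ for the third and fourth. Your description of the two elementary operations governing how $\matder$ acts on a $\star$-product is exactly the mechanism the paper uses implicitly when it expands $\matder$ applied to the inductive hypothesis; the only minor caveat is that your alternative route for the fourth formula (``applying $\nabla$ to the third formula'') would also require handling the piece $[\matder^l,\nabla]\nabla f$ from the splitting $[\matder^l,\nabla^2]=[\matder^l,\nabla]\nabla+\nabla[\matder^l,\nabla]$, but your primary inductive argument already suffices.
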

\begin{proof}
A direct calculation yields the first claim and the second claim can be found in \cite[Lemma 4.1]{Julin2024}. We prove the third one by induction, and it is easy to verify the case of $l=2$. For the case of $l\ge 3$, from Lemma \ref{l:formu 2} and the above formulas, it follows that
\begin{align*}
[\matder^l, \nabla]f={}&\matder[\matder^{l-1},\nabla]f+\nabla v\star \nabla \matder^{l-1}f\\
={}&\matder(\sum_{2\le m\le l}\sum_{|\beta| \leq l-m} 
\nabla \matder^{\beta_1} v \star \cdots \star \nabla \matder^{\beta_{m-1}} v \star \nabla \matder^{\beta_{m}} f)+\nabla v\star \nabla \matder^{l-1}f\\ 
={}&\sum_{2\le m\le l+1}\sum_{|\beta| \leq l+1-m} 
\nabla \matder^{\beta_1} v \star \cdots \star \nabla \matder^{\beta_{m-1}} v \star \nabla \matder^{\beta_{m}} f.
\end{align*} 
The last claim follows again by induction and we omit the proof.	 
\end{proof} 

Let $a_\beta(\nu)$ and $a_{\alpha, \beta}(\nu, B)$ denote the finite $\star$ product of the tensors.  
We provide a more precise formulation of the quantities than those in \cite[Lemma 4.2]{Julin2024}.
\begin{lemma}\label{l:formu 4}
Let $l\ge 1$ and we have the following results: 
\begin{enumerate}[label={\rm (\arabic*)}]
\item $[\matder^l, \tangrad] f=\sum_{2\le m\le l+1}\sum_{|\beta| \leq l+1-m} 
\tangrad \matder^{\beta_1} v \star \cdots \star \tangrad \matder^{\beta_{m-1}} v \star \tangrad \matder^{\beta_{m}} f$.
\item $\matder^l \nu=\sum_{1\le m\le l}\sum_{|\beta| \leq l-m} a_\beta(\nu) \tangrad \matder^{\beta_1} v \star \cdots \star \tangrad \matder^{\beta_m} v$.
\item $\matder^l B=\sum_{1\le m\le l}\sum_{\substack{|\beta|\le l-m,|\alpha|\le 1}}a_{\alpha,\beta}(\nu,B) \tangrad^{1+\alpha_1} \matder^{\beta_1} v \star \cdots \star \tangrad^{1+\alpha_m} \matder^{\beta_m} v$.
\item $[\matder^l, \tangrad^2]f 
=\sum_{2\le m\le l+1}\sum_{\substack{|\beta|\le l+1-m,|\alpha| \leq 1}} a_{\alpha,\beta}(\nu,B) \nabla^{1+\alpha_1} \matder^{\beta_1} v \star \cdots\star \nabla^{1+\alpha_{m-1}} \matder^{\beta_{m-1}} v \star \tangrad^{1+\alpha_m} \matder^{\beta_{m}} f$.
\end{enumerate} 
\end{lemma}
\begin{proof}
To prove the first claim, we recall $[\matder,\tangrad]f=-(\tangrad v)^\top\tangrad f$ in Lemma \ref{l:formu 1}. For the case of $l\ge 2$, we have by induction that 
\begin{align*}
[\matder^l,\tangrad] f
={}&\matder[\matder^{l-1}, \tangrad] f+[\matder,\tangrad]\matder^{l-1} f\\
={}&\matder(\sum_{2\le m\le l}\sum_{|\beta| \leq l-m} 
\tangrad \matder^{\beta_1} v \star \cdots \star \tangrad \matder^{\beta_{m-1}} v \star \tangrad \matder^{\beta_{m}} f)+\tangrad v\star\tangrad\matder^{l-1} f\\
={}&\sum_{2\le m\le l+1}\sum_{|\beta| \leq l+1-m} 
\tangrad \matder^{\beta_1} v \star \cdots \star \tangrad \matder^{\beta_{m-1}} v \star \tangrad \matder^{\beta_{m}} f.
\end{align*}
Similarly, we can obtain the last claim.  For the second claim, we recall $\matder \nu=\tangrad v\star\nu$, and for $l\ge 2$, it holds by induction.  
As for the third claim, we have for $l\ge 1$ that
\begin{align*}
\matder^l B
={}&[\matder^l, \tangrad]\nu +\tangrad\matder^l\nu\\
={}&\tangrad(\sum_{1\le m\le l}\sum_{|\beta| \leq l-m} a_\beta(\nu) \tangrad \matder^{\beta_1} v \star \cdots \star \tangrad \matder^{\beta_m} v)\\
&+\sum_{2\le m\le l+1}\sum_{|\beta| \leq l+1-m} \tangrad \matder^{\beta_1} v \star \cdots \star \tangrad \matder^{\beta_{m-1}} v \star \tangrad \matder^{\beta_{m}} \nu 
\eqqcolon I_1+I_2.
\end{align*}
It is clear that $I_1=\sum_{1\le m\le l}\sum_{\substack{|\beta|\le l-m,|\alpha|\le 1}}a_{\alpha,\beta}(\nu,B) \tangrad^{1+\alpha_1} \matder^{\beta_1} v \star \cdots \star \tangrad^{1+\alpha_m} \matder^{\beta_m} v$. 
For $I_2$, it follows that
\begin{align*}
I_2 
={}&\sum_{2\le m\le l+1}\sum_{|\beta| \leq l+1-m} \tangrad\matder^{\beta_1} v  \star \cdots \star \tangrad \matder^{\beta_{m-1}} v\\
&\star(\sum_{1\le n\le \beta_m}\sum_{\substack{|\lambda| \leq \beta_m-n,|\gamma|\le 1}}\tangrad^{1+\gamma_1} \matder^{\lambda_1} v \star \cdots\star \tangrad^{1+\gamma_n} \matder^{\lambda_n} v)\\
={}&\sum_{\substack{2\le m\le l+1,|\beta| \leq l+1-m}}\sum_{1\le n\le \beta_m}\sum_{\substack{|\lambda| \leq \beta_m-n,|\gamma|\le 1}}a_{\beta,\lambda,\gamma}(\nu,B) \tangrad \matder^{\beta_1} v \star \cdots\star \tangrad \matder^{\beta_{m-1}} v  \\
&\qquad\qquad\qquad\qquad\qquad\qquad\qquad\qquad\quad\star \tangrad^{1+\gamma_1} \matder^{\lambda_1} v\star \cdots \star \tangrad^{1+\gamma_n} \matder^{\lambda_n} v, 
\end{align*}
which is also contained in  $\sum_{1\le m\le l}\sum_{ |\alpha|\le 1,|\beta|\le l-m}a_{\alpha,\beta}(\nu,B) \tangrad^{1+\alpha_1} \matder^{\beta_1} v \star \cdots \star \tangrad^{1+\alpha_m} \matder^{\beta_m} v$. 
\end{proof}   

We denote the divergence of a matrix $A=(A_{ij})$ as $(\dive A)_i\coloneqq \sum_{j}\partial_j A_{ij}$ and recall $\curl F=\nabla F-(\nabla F)^\top$. For later use, we recall \cite[Lemma 3.3]{Julin2024}:
\begin{lemma}\label{l:3.3 jul}
Let $\Omega$ be a bounded domain with $C^{1,\alpha}$ boundary. For any smooth vector field $F$, we have $\|F\|_{L^2(\parOmega)}^2 \leq C(\|F_{\tau}\|_{L^2(\parOmega)}^2+\|F\|_{L^2(\Omega)}^2+\|\dive F\|_{L^2(\Omega)}^2+\|\curl F\|_{L^2(\Omega)}^2)$, where $ \tau=n,\sigma$.  
\end{lemma}

To estimate energy, we begin with the following basic results. By the divergence-free condition, it is clear that $\dive\matder v=\partial_i v^j\partial_j v^i$ and we have
\begin{equation}\label{e:laplace p}
-\laplace p=\partial_i v^j\partial_j v^i-\partial_i H^j\partial_j H^i.
\end{equation}  

A direct calculation produces the following identities.
\begin{lemma}\label{l:curl Dt}
For the velocity and magnetic fields, we have
\begin{enumerate}[label={\rm (\arabic*)}]
\item $\curl \matder v=(\nabla H)^{\top}\curl H+\curl H\nabla H+(H\cdot\nabla)(\curl H),[\matder,\curl] v=
-(\nabla v)^{\top}\curl v-\curl v\nabla v$.
\item $\curl \matder H=\nabla v\nabla H-(\nabla H)^{\top}(\nabla v)^{\top}+(H\cdot\nabla)(\curl v),[\matder, \curl] H=(\nabla v)^{\top}(\nabla H)^{\top}-\nabla H\nabla v$.
\end{enumerate} 
\end{lemma} 

Next, we introduce some errors associated with the magnetic field.   
Denote $R^0_{\nabla H,H}\coloneqq 0, R^0_{\nabla H,\nabla H}\coloneqq \nabla H\star\nabla H$, and for $k\ge 1$, we define
\begin{align*}
&R^k_{\nabla H,H}
\coloneqq \sum_{3\le m\le k+2}\sum_{\substack{|\alpha|\le 1,|\beta|\le k+2-m}}a_{\alpha,\beta}(\nabla v)\nabla^{1+\alpha_1}\matder^{\beta_1}v\star\cdots
\star \nabla^{1+\alpha_{m-2}}\matder^{\beta_{m-2}}v\star \nabla^{\alpha_{m-1}} H\star H,\\
&R^k_{\nabla H,\nabla H}\coloneqq \sum_{3\le m\le k+2}\sum_{\substack{|\alpha|\le 2, \alpha_i\le 1,|\beta|\le k+2-m}}\nabla^{1+\alpha_1}\matder^{\beta_1}v\star\cdots\star \nabla^{1+\alpha_{m-2}}\matder^{\beta_{m-2}}v
\star \nabla^{\alpha_{m-1}} H\star \nabla^{\alpha_{m}} H,  
\end{align*} 
where $a_{\alpha,\beta}(\nabla v)$ denotes the finite $\star$ product. In the case of $\beta_j=0,\nabla \matder^{\beta_j}$ can be absorbed into $a_{\alpha,\beta}(\nabla v)$. A direct calculation shows $\matder(\nabla H\star \nabla H)=\nabla^2 v\star H\star \nabla H+\nabla v\star\nabla H\star \nabla H$ and $\matder(\nabla H\star H)= \nabla^2 v\star H\star H+\nabla v\star \nabla H\star H$, and the following are the results for higher-order material derivatives. 
\begin{lemma}\label{l:nab H, nab H} 
Let $k\in \N $. We have $\matder^k(\nabla H\star \nabla H)=R^k_{\nabla H,\nabla H}$ and $\matder^k(\nabla H\star H)
=R^k_{\nabla H,H}$. 
\end{lemma}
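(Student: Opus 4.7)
The plan is to prove both identities by simultaneous induction on $k$, using the Leibniz rule for $\star$ products, the commutator formulas from Lemmas \ref{l:formu 1} and \ref{l:formu 3}, and the MHD identity $\matder H = H\cdot\nabla v = \nabla v\star H$.

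For the base case $k=1$, a direct computation using $\matder\nabla H = [\matder,\nabla]H + \nabla\matder H = \nabla v\star\nabla H + H\star\nabla^2 v$ yields
\begin{align*}
\matder(\nabla H\star\nabla H) &= \nabla v\star\nabla H\star\nabla H + \nabla^2 v\star H\star\nabla H,\\
\matder(\nabla H\star H) &= \nabla v\star\nabla H\star H + \nabla^2 v\star H\star H,
\end{align*}
each of which is visibly in $R^1_{\nabla H,\nabla H}$ or $R^1_{\nabla H,H}$ with $m=3$.

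For the inductive step, I would apply $\matder$ to an arbitrary summand of $R^k$ via the Leibniz rule and show that each resulting piece fits the prescribed form. The outcomes fall into four patterns: (a) $\matder$ converts a velocity factor $\nabla^{1+\alpha_i}\matder^{\beta_i}v$ to $\nabla^{1+\alpha_i}\matder^{\beta_i+1}v$, raising $|\beta|$ by one while keeping $m$ and $|\alpha|$ fixed; (b) $\matder$ generates a commutator $[\matder,\nabla^{1+\alpha_i}]\matder^{\beta_i}v$, whose expansion via Lemma \ref{l:formu 3} introduces one extra velocity factor, raising $m$ by one and leaving $|\beta|$ and $|\alpha|$ untouched; (c) $\matder$ acts on a magnetic factor via $\matder H = \nabla v\star H$ or $\matder\nabla H = \nabla v\star\nabla H + H\star\nabla^2 v$, again raising $m$ by one (in the second subcase redistributing one unit of $\alpha$ between the new velocity and old magnetic factor); and (d) in $R^k_{\nabla H,H}$, $\matder$ may hit a $\nabla v$ inside the coefficient $a_{\alpha,\beta}(\nabla v)$, producing either $\nabla v\star\nabla v$ (reabsorbed into $a$) or $\nabla\matder v$, which must be extracted as a distinguished velocity factor. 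A case-by-case verification then confirms that the updated indices satisfy $3\le m'\le k+3$, $|\alpha'|\le 2$ with $\alpha'_i\le 1$ (respectively $|\alpha'|\le 1$ for the second identity), and $|\beta'|\le k+3-m'$, placing the result in $R^{k+1}$.

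The main obstacle I foresee is case (d): reclassifying a $\nabla v$ from $a$ as a distinguished $\nabla\matder v$ enlarges $m$ and $|\beta|$ simultaneously, so a naive count of $m+|\beta|$ jumps by two while the $R^k\to R^{k+1}$ ceiling on this quantity only increases by one. To close this gap one should exploit the representational freedom inside $R^k$ by rewriting the initial term with $a$ as large as possible so that $m$ is minimal, which provides enough slack to absorb the jump of two. A secondary check verifies that the branch $\matder\nabla H = H\star\nabla^2 v$, which transfers one unit of spatial derivative from a magnetic factor to a new velocity factor, never pushes any individual $\alpha'_i$ past the ceiling $1$; since the old factor had $\alpha=1$ becoming $0$ and the new factor has $\alpha=1$, the pointwise constraint is preserved.
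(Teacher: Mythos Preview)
Your direct induction is a reasonable alternative to the paper's route, but your handling of case~(d) contains a real gap. Making $a$ as large as possible does \emph{not} create the slack you need: absorbing a distinguished factor $\nabla v$ (with $\beta_j=0$) into $a$ lowers $m$ by one while leaving $|\beta|$ unchanged, so it lowers $m+|\beta|$ by one. But nothing prevents the term you start from having all distinguished velocity factors with $\beta_j\ge 1$ (so no absorption is possible) while $a$ is still nontrivial and $m+|\beta|=k+2$ is tight. In that situation, $\matder$ hitting a $\nabla v$ inside $a$ produces $\nabla\matder v$, forcing $m'=m+1$, $|\beta'|=|\beta|+1$, hence $m'+|\beta'|=k+4>k+3$, and the term falls outside $R^{k+1}_{\nabla H,H}$.

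The fix is the \emph{opposite} of what you propose: strengthen the inductive hypothesis to the trivial-$a$ subclass. For $k=1$ the two base terms $\nabla^2 v\star H\star H$ and $\nabla v\star\nabla H\star H$ already have trivial $a$, and your cases (a)--(c) preserve this (each of them keeps $a$ trivial, raises $m$ by at most one, raises $|\beta|$ by at most one, and never raises both simultaneously). With trivial $a$ throughout, case~(d) simply does not occur. For $R^k_{\nabla H,\nabla H}$ there is no $a$ factor in the definition, so the issue never arises.

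For comparison, the paper takes a two-step path: it first expands $\matder^k(\nabla H\star\nabla H)$ and $\matder^k(\nabla H\star H)$ using the Leibniz rule and the $[\matder^l,\nabla]$ commutator from Lemma~\ref{l:formu 3}, but \emph{retaining} the factors $\nabla\matder^{\beta_j}H$ and $\matder^{\beta_j}H$; this yields an intermediate form with no $a$. It then proves, by a separate induction, the closed formulas
\[
\matder^jH=\sum_{1\le m\le j}\sum_{|\beta|\le j-m}\nabla\matder^{\beta_1}v\star\dots\star \nabla\matder^{\beta_{m}}v\star H,
\]
and the analogous one for $\nabla^i\matder^j H$, and substitutes these back to eliminate all $\matder^{\beta}H$. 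This substitution naturally produces terms with trivial $a$ and the correct index bounds. The paper's route has the side benefit of isolating the formulas \eqref{e:Dt^k H}--\eqref{e:nabDt^k H}, which are reused repeatedly later (e.g., in Proposition~\ref{p:initial cond} and Lemma~\ref{l:divDt}); your one-step induction would not yield these as byproducts.
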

\begin{proof}
It is sufficient to consider the  case of $k\ge 2$. We claim that given any $k\ge 2$, one has
\begin{align*}
&\matder^k(\nabla H\star \nabla H)=\sum_{2\le m\le k+2}\sum_{|\beta|\le k+2-m}\nabla\matder^{\beta_1}v\star\cdots\star \nabla\matder^{\beta_{m-2}}v 
\star \nabla\matder^{\beta_{m-1}}H\star \nabla\matder^{\beta_m}H,\\
&\matder^k(\nabla H\star H)
=\sum_{2\le m\le k+2}\sum_{|\beta|\le k+2-m}\nabla\matder^{\beta_1}v\star\cdots\star \nabla\matder^{\beta_{m-2}}v
\star \nabla\matder^{\beta_{m-1}}H \star \matder^{\beta_{m}}H.
\end{align*}
In fact, from Lemma \ref{l:formu 3}, we see that
\begin{align*}
\matder^k(\nabla H\star \nabla H)
={}&\nabla\matder^k H\star \nabla H+[\matder^k,\nabla] H\star \nabla H+\sum_{|\gamma|=k,\gamma_1,\gamma_2\ge 1}[\matder^{\gamma_1},\nabla] H\star[\matder^{\gamma_2},\nabla] H\\
&  +\nabla\matder^{\gamma_1} H\star[\matder^{\gamma_2},\nabla] H+\nabla\matder^{\gamma_1} H\star\nabla\matder^{\gamma_2} H\\
={}&\sum_{2\le m\le k+2}\sum_{|\beta| \leq  k+2-m} 
\nabla \matder^{\beta_1} v \star \cdots \star \nabla \matder^{\beta_{m-2}} v \star \nabla \matder^{\beta_{m-1}} H\star \nabla \matder^{\beta_{m}}H,\\
\matder^k(\nabla H\star H)
={}&\nabla\matder^k H\star H+[\matder^k,\nabla] H\star H+ \matder^k H\star \nabla H\\
&+\sum_{|\gamma|=k,\gamma_i\ge 1}[\matder^{\gamma_1},\nabla] H\star \matder^{\gamma_2} H+\nabla\matder^{\gamma_1} H\star \matder^{\gamma_2} H\\
={}&\sum_{2\le m\le k+2}\sum_{|\beta| \leq  k+2-m} 
\nabla \matder^{\beta_1} v \star \cdots \star \nabla \matder^{\beta_{m-2}} v \star \nabla \matder^{\beta_{m-1}} H\star \matder^{\beta_{m}}H.
\end{align*}
By substituting $\matder H=H\cdot \nabla v$ 
and by induction, it is readily verified that
\begin{align}
\matder^jH&=\sum_{1\le m\le j}\sum_{|\beta|\le j-m}\nabla\matder^{\beta_1}v\star\cdots\star \nabla\matder^{\beta_{m}}v\star H,\label{e:Dt^k H}\\
\nabla^i\matder^jH&=\sum_{1\le m\le j}\sum_{|\alpha|\le i,|\beta|\le j-m}\nabla^{1+\alpha_1}\matder^{\beta_1}v\star\cdots\star \nabla^{1+\alpha_{m}}\matder^{\beta_{m}}v\star \nabla^{\alpha_{m+1}}H,\label{e:nabDt^k H}
\end{align}
where $i,j\in \N $.  These conclude the proof of the lemma.
\end{proof}

The above lemma shows that $\matder^k(H\cdot\nabla H)=R^k_{\nabla H,H}$. Due to the divergence-free condition, it can be shown that taking the divergence does not increase the order of derivatives. 
\begin{lemma}\label{l:divDt}
We have the following results:
\begin{enumerate}[label={\rm (\arabic*)}]
\item $\dive\matder (H\cdot \nabla H)=\nabla^2 v\star \nabla H\star H+\nabla v\star \nabla H\star \nabla H+\nabla^2 H\star \nabla v\star H$.
\item For any integer $k\ge 2$, it holds $\dive\matder^k(H\cdot \nabla H)=\partial_j\partial_l\matder^{k-1}v^iH^l\partial_i H^j+\nabla^3\matder^{k-2}v\star\nabla v\star H\star H+\operatorname{L.O.T.}$, where $\operatorname{L.O.T.}$ stands for lower-order terms. 
\end{enumerate}   
\end{lemma}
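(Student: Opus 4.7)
The plan is to prove the two statements separately: the $k=1$ identity by direct expansion, and the $k\ge 2$ formula by induction on $k$, keeping only the two displayed leading contributions and absorbing the rest into $\operatorname{L.O.T.}$

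For the $k=1$ identity, I would expand $\matder(H^l\partial_l H^i)$ via Leibniz together with the commutator $[\matder, \partial_l]f = -\partial_l v^m\partial_m f$ from Lemma \ref{l:formu 1}, and substitute $\matder H = H\cdot\nabla v$ to produce four elementary terms involving $H$, $\nabla v$, and $\nabla H$ (including $H^lH^m\partial_l\partial_m v^i$). Applying $\partial_i$ and using the divergence-free conditions $\partial_i H^i = 0$ and $\partial_i v^i = 0$ to eliminate any contribution in which the free index ultimately falls on $\dive H$ or $\dive v$, the surviving terms arrange themselves precisely into $\nabla^2 v\star \nabla H\star H$, $\nabla v\star\nabla H\star\nabla H$, and $\nabla^2 H\star\nabla v\star H$.

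For the general case I would use the decomposition
\begin{equation*}
\dive\matder^k(H\cdot\nabla H) = \matder\bigl(\dive\matder^{k-1}(H\cdot\nabla H)\bigr) + [\dive,\matder]\,\matder^{k-1}(H\cdot\nabla H),
\end{equation*}
where $[\dive,\matder]F^i = \partial_i v^j\partial_j F^i$ has schematic form $\nabla v\star\nabla F$. Combining this with Lemma \ref{l:nab H, nab H} (which gives $\matder^{k-1}(H\cdot\nabla H) = R^{k-1}_{\nabla H,H}$) and the scaling $\matder\sim\nabla^{3/2}$, I would track which multi-indices contribute at the top two derivative orders. The inductive hypothesis (or, for $k=2$, the already-proved base case) supplies the leading pair at level $k-1$; applying $\matder$ to $\partial_j\partial_l\matder^{k-2}v^iH^l\partial_i H^j$ and commuting past the two spatial derivatives via Lemma \ref{l:formu 3} promotes $\matder^{k-2}$ to $\matder^{k-1}$, yielding exactly $\partial_j\partial_l\matder^{k-1}v^iH^l\partial_i H^j$ modulo $\operatorname{L.O.T.}$, while the second leading term $\nabla^3\matder^{k-2}v\star\nabla v\star H\star H$ arises both from $\matder$-differentiating its predecessor and from the commutator piece $\nabla v\star\nabla R^{k-1}_{\nabla H,H}$, whose top-order component is precisely of that structure.

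The main obstacle will be to verify rigorously that all remaining contributions, after expanding the enormous sum $R^k_{\nabla H,H}$ and taking a divergence, fall strictly below the order of the second leading term in the scaling $\matder\sim\nabla^{3/2}$. This requires checking that among the multi-indices $(m,\alpha,\beta)$ parameterizing $R^k_{\nabla H,H}$ in Lemma \ref{l:nab H, nab H}, only the combinations with $m=3$ and a single maximal $\beta_i = k-1$ can reach the top order after $\partial_i$ is applied, and that the numerous commutator contributions from Lemmas \ref{l:formu 3} and \ref{l:nab H, nab H} never produce new top-order structures outside the two displayed ones—a cancellation guaranteed by the interplay of $\partial_i H^i=0$, $\partial_i v^i=0$ (which kill the would-be dangerous terms in which a free divergence would act on $H$ or $v$) and the half-order gain afforded by combining $\dive$ with $\matder$ noted in the paper's discussion of reducing derivatives.
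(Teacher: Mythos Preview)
Your $k=1$ calculation matches the paper's. For $k\ge2$ the paper takes a different, non-inductive route: it expands $\matder^k(H^i\partial_iH^j)$ by Leibniz over the two factors, applies $\partial_j$, and uses $\partial_j\partial_iH^j=\partial_i\dive H=0$ to rewrite every occurrence of $\partial_j\matder^\gamma\partial_iH^j$ as the pure commutator $[\partial_j,\matder^\gamma]\partial_iH^j$, which drops to lower order. The sole surviving top-order piece is then $\partial_j\matder^kH^i\cdot\partial_iH^j$; substituting $\matder^kH^i=\matder^{k-1}(H^l\partial_lv^i)$ and extracting the leading contribution $H^l\partial_l\matder^{k-1}v^i$ produces the precise expression $\partial_j\partial_l\matder^{k-1}v^iH^l\partial_iH^j$ in one stroke. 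The second-leading term is read off from the commutator block $[\nabla,\matder^k]\nabla H\star H$, whose top is $\nabla^2\matder^{k-1}H\star\nabla v\star H$, converted via \eqref{e:nabDt^k H} into $\nabla^3\matder^{k-2}v\star\nabla v\star H\star H$.

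Your inductive decomposition $\dive\matder^k=\matder\bigl(\dive\matder^{k-1}\bigr)+[\dive,\matder]\matder^{k-1}$ is a legitimate alternative, but one point is glossed over: the exact contraction pattern of the leading term---in which one derivative index on $\matder^{k-1}v$ contracts with the \emph{undifferentiated} $H$, enabling the $H\cdot\nu=0$ integration by parts in Step~6 of Proposition~\ref{p:d/dt}---has to be propagated through the induction, yet the $k=1$ identity you invoke as base case is stated only in $\star$-notation. To launch the induction with a precise leading term you would need either to rework the $k=1$ computation keeping explicit indices, or to establish $k=2$ directly (and at $k=1$ there are in fact several $\nabla^2v\star\nabla H\star H$ contributions, not one). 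The paper's direct expansion avoids this bookkeeping by isolating the precise index structure at level $k$ without reference to $k-1$, which is why it is the cleaner choice here.
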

\begin{proof}
By Lemma \ref{l:formu 1}, a direct calculation gives the first result. 
For $k\ge 2$, the divergence-free condition implies that $\partial_j\matder^{\gamma} \partial_i H^j=[\partial_j,\matder^{\gamma}] \partial_i H^j$, and therefore
\begin{align*}
\dive\matder^k(H\cdot \nabla H)
={}&\partial_j(\matder^k \partial_i H^j H^i)+\partial_j( \partial_i H^j\matder^kH^i)+\partial_j(\sum_{|\gamma|=k,\gamma_i<k}\matder^{\gamma_1} \partial_i H^j \matder^{\gamma_2}H^i)\\ 
={}&\partial_j\matder^kH^i\partial_i H^j+[\partial_j,\matder^k] \partial_i H^j H^i+[\matder^k,\nabla] H\star \nabla H+ \nabla \matder^{\gamma_1} H\star 
\nabla \matder^{\gamma_2}H\\
&+[\matder^{\gamma_1},\nabla] H\star\nabla\matder^{\gamma_2}H+\sum_{|\gamma|=k,\gamma_i<k}[\partial_j,\matder^{\gamma_1}] \partial_i H^j \matder^{\gamma_2}H^i.
\end{align*}
In the above, it suffices to consider the most challenging term  $\partial_j\matder^kH^i\partial_i H^j$. Note that $\partial_j\matder^kH^i=\partial_j\partial_l\matder^{k-1}v^iH^l+\sum_{|\gamma|=k-1,\gamma_1<k-1}\partial_j\partial_l\matder^{\gamma_1}v^i\matder^{\gamma_2}H^l+\sum_{|\gamma|=k-1}\partial_j[\matder^{\gamma_1},\partial_l]v^i\matder^{\gamma_2}H^l$, and we find that
\begin{align*}
\dive\matder^k(H\cdot \nabla H)
={}&\partial_j\partial_l\matder^{k-1}v^iH^l\partial_i H^j+[\nabla,\matder^k] \nabla H\star H+[\matder^k,\nabla] H\star \nabla H\\
&+\sum_{|\gamma|=k,\gamma_i<k}\big([\nabla,\matder^{\gamma_1}] \nabla H\star \matder^{\gamma_2}H+ \nabla \matder^{\gamma_1} H\star 
\nabla \matder^{\gamma_2}H+[\matder^{\gamma_1},\nabla] H\star 
\nabla \matder^{\gamma_2}H\big)\\ &+\sum_{|\gamma|=k-1,\gamma_1<k-1}\nabla^2\matder^{\gamma_1}v\star\matder^{\gamma_2}H\star\nabla H+\sum_{|\gamma|=k-1}\nabla[\matder^{\gamma_1},\nabla]v\star\matder^{\gamma_2}H\star\nabla H\\
\eqqcolon {}&\partial_j\partial_l\matder^{k-1}v^iH^l\partial_i H^j+R.
\end{align*}
Here, the highest-order term in $R$ is $\nabla^2\matder^{k-1}H\star\nabla v\star H$, resulting from $[\nabla,\matder^k]\nabla H\star H$. To complete the proof, we replace the material derivative with the spatial derivative, resulting in $\nabla^3\matder^{k-2}v\star\nabla v\star H\star H$, along with lower-order terms as shown in \eqref{e:nabDt^k H}.  
\end{proof} 

To derive the energy estimates by applying the div-curl estimates, it is inevitable to compute $\dive\matder^l v,\dive\matder^l H,\curl\matder^l v$, and $\curl\matder^l H$. The following lemma is crucial for computing $\curl \matder^l v$ (see Lemma \ref{l:formu 5}). 
\begin{lemma}\label{l:nab^2H,H}
It holds $\matder((H\cdot\nabla)(\curl H))
=\nabla^2\curl v\star H\star H+\nabla^2 H\star\nabla v\star  H+\nabla^2 v\star\nabla H\star H$, and 
\begin{align*}
&\matder^k((H\cdot\nabla)\curl H)\\
={}&\nabla^{k+1}\curl H\star \underbrace{H\star \cdots\star H}_{k\text{ times }}+\sum_{\substack{|\alpha|,m\le k+2,\alpha_i\le k+1,F_j=v,H}}\nabla^{\alpha_1}F_1\star\cdots\star\nabla^{\alpha_m}F_m\\
& +\!\!\sum_{\substack{|\alpha|+|\beta|\le k+2,\alpha_i+\beta_i\le k+1,\\m\le k+1,\beta_i\le k-1,F_j=v,H}}\!\!\!\!\!\!\!\!\!\!\!\!\nabla^{\alpha_1}\matder^{\beta_1}v\star\cdots\star \nabla^{\alpha_{k-1}}\matder^{\beta_{k-1}}v\star \nabla^{\alpha_k} F_k\star\cdots\star\nabla^{\alpha_m}F_m,
\end{align*}
if $k\ge 2$ is even. For odd $k\ge 3$, we replace $\nabla^{k+1}\curl H\star \underbrace{H\star \cdots\star H}_{k\text{ times }}$ by $\nabla^{k+1}\curl v\star \underbrace{H\star \cdots\star H}_{k\text{ times }}$. 
\end{lemma}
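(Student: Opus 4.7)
The plan is to induct on $k$, with the base case handled by direct expansion and the induction step driven by the alternation $\matder\curl H\supset(H\cdot\nabla)\curl v$ and $\matder\curl v\supset(H\cdot\nabla)\curl H$ furnished by Lemma \ref{l:curl Dt}.

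For the base case $k=1$, I would expand
\begin{equation*}
\matder((H\cdot\nabla)\curl H)=\matder H^i\,\partial_i\curl H+H^i[\matder,\partial_i]\curl H+H^i\partial_i\matder\curl H,
\end{equation*}
and substitute $\matder H=H\cdot\nabla v$ together with the commutator $[\matder,\partial_i]f=-\partial_iv^k\partial_k f$ from Lemma \ref{l:formu 1} and the formula for $\matder\curl H$ from Lemma \ref{l:curl Dt}. Only the transport piece $(H\cdot\nabla)\curl v$ inside $\matder\curl H$ is capable of saturating the derivative count: differentiating it once more by $H^i\partial_i$ yields $H^iH^l\partial_i\partial_l\curl v=\nabla^2\curl v\star H\star H$. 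The remaining contributions, all of the form $\nabla v\star\nabla^2 H\star H$ or $\nabla^2 v\star\nabla H\star H$ after using $\curl H=\nabla H-(\nabla H)^\top$, collect into the two stated error terms.

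For the induction step, I would assume the identity at level $k-1$ and apply $\matder$ to each summand. The only summand that can push the top derivative count to the next level is the leading one, of the shape $\nabla^{k}\curl X\star H^{\ast}$ with $X\in\{v,H\}$ determined by the parity of $k-1$. Pushing $\matder$ past $\nabla^{k}$ via Lemma \ref{l:formu 3} and substituting $\matder\curl X$ via Lemma \ref{l:curl Dt}, precisely the exchange piece $(H\cdot\nabla)\curl Y$, with $Y$ the opposite field, attains $\nabla^{k+1}\curl Y$ and yields the new leading term $\nabla^{k+1}\curl Y\star H^{\ast+1}$; here $\matder$ acting on the accumulated $H^{\ast}$ factors is handled by $\matder H=H\cdot\nabla v$. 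The commutators $[\matder,\nabla^{k}]$ from Lemma \ref{l:formu 3} and the non-transport pieces of $\matder\curl X$ have strictly smaller top derivative on any single factor, and hence fall into the bulk sums.

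The routine but delicate part is verifying that every subordinate term produced at step $k$ satisfies the prescribed index bounds $|\alpha|\le k+2$, $\alpha_i\le k+1$, $m\le k+2$ in the pure spatial sum, and $|\alpha|+|\beta|\le k+2$, $\alpha_i+\beta_i\le k+1$, $\beta_i\le k-1$, $m\le k+1$ in the mixed sum. Each $\matder$ raises either $|\alpha|$ or some $\beta_i$ by exactly one, and whenever $\matder$ lands on an $H$ factor I would immediately exchange it for $H\cdot\nabla v$ via \eqref{e:Dt^k H}--\eqref{e:nabDt^k H}, transferring one unit of regularity onto $v$ without creating a new material derivative on $H$. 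The main obstacle is precisely this bookkeeping: isolating the unique saturated summand from the combinatorial explosion of product-rule and commutator contributions, and confirming that none of the latter escape the stated index classes. The parity alternation in the leading field itself is, by contrast, a clean consequence of the single identity in Lemma \ref{l:curl Dt} that each of $\matder\curl H$ and $\matder\curl v$ contains the $H$-transport of the opposite curl.
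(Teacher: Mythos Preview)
Your proposal is correct and takes essentially the same approach as the paper. Both arguments isolate the leading contribution via the transport pieces $(H\cdot\nabla)\curl v\subset\matder\curl H$ and $(H\cdot\nabla)\curl H\subset\matder\curl v$ from Lemma~\ref{l:curl Dt}, pushing $\matder$ past $\nabla^{k}$ with Lemma~\ref{l:formu 3} and absorbing the remaining product-rule and commutator contributions into the stated index classes; the paper carries this out by explicit computation for $k=1,2,3$ and then declares that the remaining cases follow in the same way, whereas you package the identical mechanism as a formal induction on $k$.
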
  
\begin{proof}
First, we apply Lemma \ref{l:curl Dt} to obtain $\matder[(H\cdot\nabla)(\curl H)]
=\nabla^2\curl v\star H\star H+\nabla^2 H\star\nabla v\star  H+\nabla^2 v\star\nabla H\star H$. 
In the case of $k=2$, one has
\begin{align*}
\matder^{2}((H\cdot\nabla)(\curl H)) 
={}&\partial_i\matder^{2}\curl H H^i+[\matder^{2},\partial_i]\curl H H^i+\nabla^2 H\star \matder^{2}H\\
&+\matder\nabla^2 H \star \nabla v\star H 
\eqqcolon  I_1+I_2+I_3+I_4.
\end{align*}
We denote $I_1
=(\nabla\curl \matder (H\cdot\nabla v))\star H+\nabla ([\matder^{2},\nabla] H)  \star H\eqqcolon I_{11}+I_{12}$. 
By Lemma \ref{l:curl Dt}, it holds $\curl (H\cdot\nabla \matder v)=\nabla \matder v\star \nabla H+\nabla^2\curl H\star H\star H+\nabla^2 H\star \nabla H\star H$, and using Lemma \ref{l:formu 1}, it follows that
\begin{align*}
I_{11}={}&\nabla(\curl (\matder H\cdot\nabla v))\star H+\nabla(\curl  (H\cdot\matder\nabla v))\star H \\ 
={}&\nabla^3\curl H\star H\star H+\nabla^2 \matder v\star \nabla H+\nabla \matder v\star \nabla^2 H+\sum_{|\alpha|,m\le 4,\alpha_i\le 3,F_j=v,H}\nabla^{\alpha_1}F_1\star\cdots\star\nabla^{\alpha_m}F_m.
\end{align*}
Applying Lemma \ref{l:formu 3}, we have $I_{12}=\nabla^2\matder v\star\nabla H\star H+\nabla\matder v\star\nabla^2H\star H+\nabla^2 H\star\nabla v\star\nabla v\star H+\nabla^2 v\star\nabla H\star\nabla v\star H+\nabla^2 H\star\nabla v\star H+\nabla^2 v\star\nabla H\star H,$ and $
I_2=\nabla\matder v\star\nabla^2H\star H+\nabla^3 v\star\nabla v\star H\star H+\nabla^2 H\star\nabla v\star\nabla v\star H+ \nabla^2 v\star\nabla H\star\nabla v\star H+\nabla^2 H\star\nabla v\star H$. 

To control the last two terms, \eqref{e:Dt^k H} implies that $I_3=\nabla\matder v\star\nabla^2H\star H+\nabla^2H\star\nabla v\star\nabla v\star H+\nabla^2H\star\nabla v\star H$, and Lemma \ref{l:formu 3} together with \eqref{e:mhd} yields $I_4=\nabla^3 v\star\nabla v\star H\star H+\nabla^2 H\star \nabla v\star\nabla v\star H+\nabla^2 v\star \nabla H\star\nabla v\star H$. 
We arrive at the following 
\begin{align*}
\matder^2((H\cdot\nabla)\curl H) 
={}&\nabla^3\curl H\star H\star H +\sum_{|\alpha|,m\le 4,\alpha_i\le 3,F_j=v,H}\nabla^{\alpha_1}F_1\star\cdots\star\nabla^{\alpha_m}F_m\\ 
&+\sum_{\substack{|\alpha|+|\beta|\le 4, \alpha_i+\beta_i\le 3\\\beta_i\le 1,m\le 3,F_j=v,H}}\nabla^{\alpha_1}\matder^{\beta_1}v\star\nabla^{\alpha_2} F_{2}\star\cdots\star\nabla^{\alpha_m} F_m
\eqqcolon J_1+J_2+J_3.
\end{align*}
As for $k=3$, to calculate $\matder J_1$, we only focus on the most difficult term. Actually, it holds $\matder\nabla^3\curl  H=\nabla^4\curl v\star H+\sum_{|\alpha|\le 5,\alpha_i\le 4}\nabla^{\alpha_1}H\star\nabla^{\alpha_2}v$, 
from Lemmas \ref{l:formu 3} and \ref{l:curl Dt}.
With the help of Lemma \ref{l:formu 3}, $\matder J_2$ and $\matder J_3$ can be treated in the same fashion. Therefore, we obtain
\begin{align*}
\matder^3((H\cdot\nabla)\curl H)
={}&\nabla^4\curl v\star H\star H\star H +\sum_{|\alpha|,m\le 5,\alpha_i\le 4,F_j=v,H}\nabla^{\alpha_1}F_1\star\cdots\star\nabla^{\alpha_m}F_m\\ 
&+\sum_{\substack{|\alpha|+|\beta|\le 5, \beta_i\le 2,\alpha_i+\beta_i\le 4,m\le 4,F_j=v,H}}\nabla^{\alpha_1}\matder^{\beta_1}v\star \nabla^{\alpha_2}\matder^{\beta_2}v\star\nabla^{\alpha_3} F_{3}\star\cdots\star\nabla^{\alpha_m} F_m .
\end{align*}
The other cases can be shown in the same way. 
\end{proof}

From now on,  we denote $R^0_{\nabla^2 H, H}
\coloneqq (H\cdot \nabla )\curl H$, and $R^k_{\nabla^2 H, H}
\coloneqq \matder^k((H\cdot \nabla )\curl H)$ for $ k\ge 1$.
We proceed to introduce another two types of error terms.  
The first one is written in the form
\begin{equation}\label{e:def error 1}
R_{\Rdiv}^0=\nabla v\star\nabla v,\ R_{\Rdiv}^l=\sum_{2\le m\le l+1}\sum_{|\beta| \leq l+2-m} 
\nabla \matder^{\beta_1} v \star \cdots \star \nabla \matder^{\beta_{m-1}} v \star \nabla \matder^{\beta_{m}} v, 
\end{equation} 
for any $l\ge 1$. 
The second error term is denoted by
\begin{equation}\label{e:def error 2}
\begin{aligned}
R_{\Rbulk}^0&=\nabla v\star  \matder v+\nabla v\star\nabla v\star v,\\
R_{\Rbulk}^l&=\sum_{2\le m\le l+1,|\beta| \leq l,|\alpha|\le 1} a_{\alpha,\beta}(\nabla v) \nabla \matder^{\beta_1} v \star \cdots \star \nabla \matder^{\beta_{m-1}} v \star \nabla^{\alpha_1} \matder^{\alpha_2+\beta_{m}} v, 
\end{aligned}
\end{equation}	
where $l\ge 1$ and $a_{\alpha,\beta}(\nabla v)$ denotes the finite $\star$ product as before.

\begin{lemma}\label{l:[Dt^l,nab]p}
For $l\in\Nz$, we have $[\matder^{l+1},\nabla ]p= \sum_{i \leq l} \nabla  \matder^{i} v  \star  \nabla H\star H+R_{\Rbulk}^l+R^l_{\nabla H,H}$. 
\end{lemma}
\begin{proof}
We prove this claim by induction. The case of $l=0$ follows directly. As for $l\ge 1$, by Lemmas \ref{l:formu 2} and \ref{l:formu 3},   
\begin{equation*}
[\matder^{l+1}, \nabla ]p=\matder([\matder^{l},\nabla ]p)+[\matder,\nabla ]\matder^{l}p=\matder([\matder^{l},\nabla ]p)-(\nabla v)^{\top}\nabla \matder^{l}p,
\end{equation*} 
where $-\nabla \matder^{l}p=[\matder^l,\nabla]p+\matder^{l} (\matder v -H\cdot \nabla H)=[\matder^l,\nabla]p+\matder^{l+1} v-\matder^{l}(H\cdot \nabla H)$. 
A direct computation shows that $\matder R^{l-1}_{\Rbulk}=R^l_{\Rbulk}$ and $\matder R^{l-1}_{\nabla H,H}=R^l_{\nabla H,H}$. These, combined with $[\matder^{l},\nabla]p=\sum_{i\leq l-1}\nabla\matder^{i} v\star\nabla H\star H+R_{\Rbulk}^{l-1}+R^{l-1}_{\nabla H,H}$ (also obtained by induction), yield that
\begin{equation*}
[\matder^{l+1}, \nabla ]p=\matder(\sum_{ i \leq l-1}  \nabla  \matder^{i} v  \star  \nabla H\star H)+R^l_{\Rbulk}+R^l_{\nabla H,H}=\sum_{ i \leq l} \nabla  \matder^{i} v  \star  \nabla H\star H+R_{\Rbulk}^l+R^l_{\nabla H,H},
\end{equation*} 
where in the last step,  the lower-order terms have been absorbed into the terms $R^l_{\Rbulk}$ and $R^l_{\nabla H,H}$.  
\end{proof} 
\begin{lemma}\label{l:formu 5} Let $l\in \N $. We have 
\begin{align*}
\matder \nabla^l \curl v={}&
(H\cdot \nabla) \nabla^{l} \curl H+\nabla v  \star \nabla^{l} \curl v+\nabla^{l+1}v\star \curl v\\
&+\sum_{|\beta|=l }\nabla^{1+\beta_1}  H  \star \nabla^{\beta_{2}} \curl H +\sum_{|\alpha| \leq l-1,\alpha_2\le l-2 }\nabla^{1+\alpha_1}  v  \star \nabla^{1+\alpha_{2}} \curl v,\\
\matder\nabla^l \curl H={}&(H\cdot\nabla)\nabla^l(\curl v)+\nabla  v  \star \nabla^{l} \curl H\\
&+\sum_{|\beta| =l } 
\nabla^{1+\beta_1}  v  \star \nabla^{1+\beta_{2}}H +\sum_{|\alpha| \leq l-1,\alpha_2\le l-2 } 
\nabla^{1+\alpha_1}  v  \star \nabla^{1+\alpha_{2}} \curl H.
\end{align*}
Moreover, we can also write $\dive\matder^l v=R^{l-1}_{\Rdiv}, \curl\matder^l v=R^{l-1}_{\Rdiv}+   R^{l-1}_{\nabla H,\nabla H}+	R^{l-1}_{\nabla^2 H, H}$, and   $\dive\matder^{l+1} v=\dive\dive(v \otimes \matder^l v)+\dive R_{\Rbulk}^{l-1}$. 
\end{lemma}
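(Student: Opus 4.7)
My plan is to establish the four identities of Lemma \ref{l:formu 5} by combining Lemma \ref{l:curl Dt} with the commutator machinery of Lemmas \ref{l:formu 1}--\ref{l:formu 3}, together with the substitution rules \eqref{e:Dt^k H}--\eqref{e:nabDt^k H} and the structural definitions \eqref{e:nab H, H}--\eqref{e:nab^2H,H} and \eqref{e:def error 1}--\eqref{e:def error 2}.

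For the formulas for $\matder\nabla^l\curl v$ and $\matder\nabla^l\curl H$, I write $\matder\nabla^l\curl F = \nabla^l \matder\curl F + [\matder,\nabla^l]\curl F$ for $F = v, H$ and substitute the explicit expressions from Lemma \ref{l:curl Dt}. The leading term $(H\cdot\nabla)\nabla^l\curl H$ (respectively $(H\cdot\nabla)\nabla^l\curl v$) arises from applying all $l$ spatial derivatives to the $\curl$-factor in $(H\cdot\nabla)\curl H$ (respectively $(H\cdot\nabla)\curl v$). The distribution of the remaining derivatives, combined with Lemma \ref{l:formu 3}'s expression of $[\matder,\nabla^l]$, yields precisely the multi-index sums claimed. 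One must be careful to keep the $\curl$ structure on one factor in the highest-order terms, justifying the appearance of $\nabla^{1+\beta_1}H\star\nabla^{\beta_2}\curl H$ rather than $\nabla^{\beta_1}H\star\nabla^{1+\beta_2}\curl H$ in the second sum.

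For the identities $\dive\matder^l v = R^{l-1}_{\Rdiv}$ and $\curl\matder^l v = R^{l-1}_{\Rdiv}+R^{l-1}_{\nabla H,\nabla H}+R^{l-1}_{\nabla^2 H, H}$, I proceed by induction on $l$. The base case $l = 1$ is direct: $\dive\matder v = \partial_i v^j\partial_j v^i$ is of the form $R^0_{\Rdiv}$, and $\curl\matder v$ coincides with $R^0_{\nabla H,\nabla H} + R^0_{\nabla^2 H, H}$ by Lemma \ref{l:curl Dt}. For the inductive step, the key identity is
\begin{equation*}
\dive\matder^{l+1}v = \matder\dive\matder^l v + [\dive,\matder]\matder^l v, \qquad [\dive,\matder]G = \nabla v\star\nabla G,
\end{equation*}
together with the fact that $\matder$ applied to a product of terms of the form $\nabla\matder^{\beta_i}v$ produces, via $\matder\nabla\matder^k v = \nabla\matder^{k+1}v + \nabla v\star\nabla\matder^k v$, another expression in the class $R^l_{\Rdiv}$. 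The $\curl$-case is analogous, now using Lemmas \ref{l:nab H, nab H} and \ref{l:nab^2H,H} to propagate the classes $R^{l-1}_{\nabla H,\nabla H}$ and $R^{l-1}_{\nabla^2 H, H}$ under $\matder$ while absorbing all lower-order cross terms.

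For the final alternative representation $\dive\matder^{l+1}v = \dive\dive(v\otimes\matder^l v) + \dive R^{l-1}_{\Rbulk}$, I decompose $\matder^{l+1}v = \partial_t\matder^l v + v\cdot\nabla\matder^l v$ and observe that $v\cdot\nabla\matder^l v = \dive(v\otimes\matder^l v)$ thanks to $\dive v = 0$. Taking divergence reduces matters to showing $\partial_t\dive\matder^l v$ belongs to the class $\dive R^{l-1}_{\Rbulk}$. Using $\partial_t = \matder - v\cdot\nabla$ on the already-established identity $\dive\matder^l v = R^{l-1}_{\Rdiv}$, and substituting $\partial_t \matder^{\beta_i}v = \matder^{\beta_i+1}v - v\cdot\nabla\matder^{\beta_i}v$ inside each $\star$-factor, the resulting expression matches the form of $\dive R^{l-1}_{\Rbulk}$ from \eqref{e:def error 2}. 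I expect this last bookkeeping step to be the main obstacle, since one must carefully track the multi-index structure, with $|\alpha|\le 1$ encoding an extra spatial derivative and $|\beta|\le l-1$ counting material derivatives, and verify that the coefficient $a_{\alpha,\beta}(\nabla v)$ accommodates all the $\nabla v$-factors produced when redistributing time derivatives and invoking the commutator $[\nabla,v\cdot\nabla] = \nabla v\cdot\nabla$.
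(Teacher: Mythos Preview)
Your treatment of the first three identities is essentially the paper's: write $\matder\nabla^l\curl F=\nabla^l\matder\curl F+[\matder,\nabla^l]\curl F$ and feed in Lemma~\ref{l:curl Dt} and Lemma~\ref{l:formu 3}; and for $\dive\matder^l v$, $\curl\matder^l v$ the paper likewise peels off one $\matder$ and uses $\matder^{l-1}\curl\matder v$ together with the commutator $[\nabla,\matder^{l-1}]$ (a direct expansion rather than induction, but the mechanism is the same as yours).

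The final identity $\dive\matder^{l+1}v=\dive\dive(v\otimes\matder^l v)+\dive R^{l-1}_{\Rbulk}$ is where your plan has a gap. You reduce to showing $\partial_t\dive\matder^l v\in\dive R^{l-1}_{\Rbulk}$ and propose to verify this by applying $\partial_t$ factor-by-factor to the $\star$-expression $R^{l-1}_{\Rdiv}$. The problem is that the $\star$-notation discards the specific index contractions, and a generic element of the class $R^{l-1}_{\Rdiv}$ is \emph{not} a divergence; after applying $\partial_t$ you produce terms like $(\prod_{j\ne i}\nabla\matder^{\beta_j}v)\star v\star\nabla^2\matder^{\beta_i}v$ whose divergence structure cannot be recovered from the class description alone. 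The divergence form of $\partial_t\dive\matder^l v$ comes from the \emph{specific} contraction pattern of $\dive\matder^l v$ (ultimately from $\dive v=0$), not from class membership.

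The paper avoids this by inducting on the target formula itself and never leaving divergence form. The two key ingredients are the identities
\[
[\matder,\dive]F=-\dive(\nabla v\,F),\qquad \dive\dive(v\otimes\matder^l v)=\dive(\nabla\matder^l v\,v),
\]
so that $\dive\matder^{l+1}v=\matder\dive\matder^l v+\dive(\nabla v\,\matder^l v)$, and then one applies the induction hypothesis $\dive\matder^l v=\dive\dive(v\otimes\matder^{l-1}v)+\dive R^{l-2}_{\Rbulk}$ and commutes $\matder$ past each $\dive$ using the first identity. Every intermediate expression is manifestly a divergence, and the class manipulations $\matder\dive R^{l-2}_{\Rbulk}=\dive R^{l-1}_{\Rbulk}$ and $\dive(\nabla v\,\matder^l v)=\dive R^{l-1}_{\Rbulk}$ close the loop. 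If you want to salvage your $\partial_t$-based route, you would need to track the actual index structure of $\dive\matder^l v$ (not just its $\star$-class) through the computation, which amounts to redoing the paper's induction in disguise.
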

\begin{proof}
The first two claims are  immediate consequences of Lemmas  \ref{l:formu 3} and \ref{l:curl Dt}.  
Regarding $\curl\matder^l v$ and $\dive\matder^l v$ for $l\ge 2$.  Noting that $(\matder^l \nabla u)^\top=\matder^l [ (\nabla u)^\top ]$ and applying Lemmas \ref{l:formu 3} and \ref{l:nab^2H,H}, together with Lemma \ref{l:curl Dt}, we obtain 
\begin{align*}
\curl \matder^l v 
={}&[\nabla, \matder^{l-1}](\matder v)-([\nabla,  \matder^{l-1}](\matder v))^\top+\matder^{l-1}\curl\matder v \\
={}&\sum_{2\le m\le l}\sum_{|\beta| \leq l-m} 
\nabla \matder^{\beta_1} v \star \cdots \star \nabla \matder^{\beta_{m-1}} v \star \nabla \matder^{\beta_{m}+1} v+\matder^{l-1}(\nabla H\star \nabla H)\\
&+\matder^{l-1}((H\cdot\nabla)(\curl H)) 
=R^{l-1}_{\Rdiv} + 	R^{l-1}_{\nabla H,\nabla H}+	R^{l-1}_{\nabla^2 H, H}.
\end{align*}
Similarly, one has $\dive\matder^l v=R^{l-1}_{\Rdiv}$ thanks to $\dive v=0$. 
For the last statement, we apply $[\matder,\dive]F=-\dive(\nabla vF)$ and $\dive \dive (v\otimes\matder^l v)=\dive(\nabla \matder^l vv),l\ge 1$ (both can be easily computed).
Then, we have 
\begin{equation*}
\dive \matder^{2}v=\matder\dive (\nabla v v)+\dive(\nabla v\matder v)=\dive\matder(\nabla v v)-\dive(\nabla v\nabla v v)+\dive R^0_{\Rbulk},
\end{equation*}
and therefore,
\begin{equation*}
\dive \matder^{2}v 
=\dive(\nabla \matder v v)+\dive([\matder,\nabla] v v)-\dive(\nabla v\nabla v v)+\dive R^0_{\Rbulk}=\dive \dive (v\otimes\matder v)+\dive  R^0_{\Rbulk}.
\end{equation*} 

For $l\ge 2$, we argue by induction, i.e., $\dive \matder^{l+1}v=\matder\dive\matder^l v-[\matder,\dive]\matder^l v=\matder\dive(\nabla\matder^{l-1} vv)+  \matder\dive R^{l-2}_{\Rbulk}+\dive(\nabla v\matder^l v)$. 
The proof is complete since $\matder\dive R^{l-2}_{\Rbulk}=\dive R^{l-1}_{\Rbulk}, \dive(\nabla v\matder^l v)=\dive R^{l-1}_{\Rbulk}$ (direct calculations), and 
\begin{align*}
\matder\dive(\nabla\matder^{l-1} vv)
={}&\dive\matder(\nabla\matder^{l-1} vv)+[\matder,\dive](\nabla\matder^{l-1} vv)\\ 
={}&\dive(\nabla v\star \nabla \matder^{l-1}v\star v)+\dive(\nabla\matder^{l} vv+\matder v\star \nabla\matder^{l-1} v +v\star\nabla v\star \nabla\matder^{l-1} v)\\ 
={}&\dive\dive(v\otimes \matder^{l} v)+\dive R^{l-1}_{\Rbulk}.
\end{align*} 
\end{proof}

\begin{lemma}\label{l:lapDtp} 
Let $l\ge 1$. We have 
\begin{align*}
-\laplace \matder p
={}&\dive\dive(v\otimes\matder v)+\dive( R^0_{\Rbulk}+\nabla v\star H\star \nabla H+H\cdot \nabla (H\cdot\nabla v))\\
={}&-\dive\dive(v\otimes\nabla p)+\dive R^0_{\Rbulk}+\nabla^2 v\star \nabla H\star H+\nabla^2 H\star \nabla v\star H\\
&+\nabla^2 H\star\nabla H\star v+\nabla v\star \nabla H\star \nabla H\\
-\laplace \matder^{l+1} p
={}&\dive\dive(v \otimes \matder^{l+1} v )-\dive R^{l+1}_{\nabla^2H, H}+\dive(\sum_{i\leq l}\nabla\matder^{i} v  \star\nabla H\star H+R_{\Rbulk}^l+R^l_{\nabla H,H}).
\end{align*}
\end{lemma}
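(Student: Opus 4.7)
My plan is to derive the formulas by applying $\matder^{l+1}$ to the momentum equation and then taking the divergence. Starting from $\nabla p = -\matder v + H\cdot\nabla H$, I write $\nabla \matder^{l+1}p = \matder^{l+1}\nabla p - [\matder^{l+1},\nabla]p$ and take the divergence to obtain the base formula
\begin{equation*}
-\laplace \matder^{l+1}p = \dive\matder^{l+2}v - \dive\matder^{l+1}(H\cdot\nabla H) + \dive[\matder^{l+1},\nabla]p.
\end{equation*}
The last identity in Lemma \ref{l:formu 5} converts $\dive\matder^{l+2}v$ into $\dive\dive(v\otimes \matder^{l+1}v) + \dive R^l_{\Rbulk}$, while \eqref{e:[Dt^l,nab]p} expands the commutator $[\matder^{l+1},\nabla]p$ as $\sum_{\beta_1\le l}\nabla \matder^{\beta_1}v\star\nabla H\star H + R^l_{\Rbulk} + R^l_{\nabla H,H}$. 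The key remaining quantity is therefore $\dive\matder^{l+1}(H\cdot\nabla H)$.

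For the case $l = 0$, I would first establish the clean identity $\matder(H\cdot\nabla H) = (H\cdot\nabla)(H\cdot\nabla v)$. Substituting $\matder H = H\cdot\nabla v$ and using $[\matder,\partial_j]H = -\partial_j v^k\partial_k H$ from Lemma \ref{l:formu 1}, the two cross-terms of schematic form $\nabla v\star H\star\nabla H$ cancel after a relabeling of summation indices, leaving only the second-derivative piece $(H\cdot\nabla)(H\cdot\nabla v)$. Combining this with $[\matder,\nabla]p = -(\nabla v)^\top\nabla p = (\nabla v)^\top\matder v - (\nabla v)^\top(H\cdot\nabla H)$ (where the first piece absorbs into $R^0_{\Rbulk}$) yields the first equality. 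For the second equality, I would substitute $\matder v = -\nabla p + H\cdot\nabla H$ inside $\dive\dive(v\otimes\matder v)$ and expand both $\dive\dive(v\otimes(H\cdot\nabla H))$ and $\dive((H\cdot\nabla)(H\cdot\nabla v))$ using $\dive v = \dive H = 0$; the divergence-free conditions annihilate the would-be top-order pieces, and the surviving contributions reassemble exactly into the four schematic error terms $\nabla^2 v\star \nabla H \star H$, $\nabla^2 H\star \nabla v \star H$, $\nabla^2 H\star \nabla H \star v$, and $\nabla v\star \nabla H\star \nabla H$.

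For $l \ge 1$, inserting Lemma \ref{l:formu 5} and \eqref{e:[Dt^l,nab]p} into the base formula reduces the task to identifying $\dive\matder^{l+1}(H\cdot\nabla H)$ with $\dive R^{l+1}_{\nabla^2 H,H}$ modulo the listed error classes. Iterating the $l = 0$ identity gives $\matder^{l+1}(H\cdot\nabla H) = \matder^l((H\cdot\nabla)\matder H)$, and because $\matder H = H\cdot\nabla v$ I can trade material derivatives for spatial ones using Lemmas \ref{l:nab H, nab H} and \ref{l:nab^2H,H}. I would organize this as an induction on $l$ modeled on the proof of Lemma \ref{l:divDt}: at each step, $\dive$ acting on the leading factor creates an extra derivative that gets absorbed through either $\dive H = 0$, $\dive v = 0$, or $\dive\matder^l v = R^{l-1}_{\Rdiv}$.

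The main obstacle will be this final identification. Because $R^{l+1}_{\nabla^2 H,H} = \matder^{l+1}((H\cdot\nabla)\curl H)$ is defined through $\curl H$ rather than $H$ itself, the matching is not a tautology; it will rely on the vector identity $\curl((H\cdot\nabla)H) = (H\cdot\nabla)\curl H + \nabla H\star\nabla H$ (paired with $\dive\curl = 0$) to trade curl terms for divergence-friendly ones, together with the divergence-free conditions to prevent a naive leading term of shape $H\star H\star\nabla^3\matder^l v$ from surviving—the same half-derivative reduction phenomenon that powers Lemma \ref{l:divDt}. Once this reduction is carried out, the residual terms fall routinely into the error classes $R^l_{\Rbulk}$, $R^l_{\nabla H,H}$, and $\sum_{\beta_1\le l}\nabla\matder^{\beta_1}v\star\nabla H\star H$.
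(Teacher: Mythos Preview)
Your base formula and the use of Lemma~\ref{l:formu 5} and \eqref{e:[Dt^l,nab]p} are exactly what the paper does, so the first two thirds of your plan matches the paper's proof verbatim. Your identity $\matder(H\cdot\nabla H)=(H\cdot\nabla)(H\cdot\nabla v)$ for the $l=0$ case is correct and slightly cleaner than what the paper writes (the paper simply leaves $-\dive\matder(H\cdot\nabla H)$ and then invokes Lemma~\ref{l:divDt}).

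Where you diverge is in your treatment of the ``main obstacle'' for $l\ge 1$. In the paper there is no obstacle: the passage from $-\dive\matder^{l+1}(H\cdot\nabla H)$ to $-\dive R^{l+1}_{\nabla^2 H,H}$ is written down in one line with no further argument. As the Remark following the lemma and the application in Step~6 of Proposition~\ref{p:d/dt} make clear, the symbol $\dive R^{l+1}_{\nabla^2 H,H}$ is just being used as a label for $\dive\matder^{l+1}(H\cdot\nabla H)$, whose actual fine structure (and the half-derivative gain) is supplied separately by Lemma~\ref{l:divDt}. So the paper does not match $\matder^{l+1}(H\cdot\nabla H)$ against $\matder^{l+1}((H\cdot\nabla)\curl H)$ at all---those are different objects (a vector and a matrix), and no such matching is needed for the way the formula is later consumed.

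Your proposed route through $\curl((H\cdot\nabla)H)=(H\cdot\nabla)\curl H+\nabla H\star\nabla H$ together with ``$\dive\curl=0$'' does not work as stated: in the paper's convention $\curl F=\nabla F-(\nabla F)^\top$ is matrix-valued, and $\dive\curl F=\Delta F-\nabla\dive F$, which does not vanish. So that line of argument would have to be reworked, but more to the point it is unnecessary---once you have your base formula plus Lemma~\ref{l:formu 5} and \eqref{e:[Dt^l,nab]p}, the proof is finished.
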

\begin{proof} 
From the divergence-free condition, Lemmas \ref{l:formu 5} and \ref{l:[Dt^l,nab]p}, the first claim follows. 
The second claim follows by applying Lemma \ref{l:[Dt^l,nab]p} that
\begin{align*}
-\laplace \matder^{l+1}p
={}&-\dive \matder^{l+1} \nabla p+\dive [\matder^{l+1}, \nabla ] p\\
={}&\dive \matder^{l+2} v-\dive \matder^{l+1} (H\cdot\nabla H)+\dive R_{\Rbulk}^l+\dive( \sum_{i \leq l} \nabla  \matder^{i} v  \star  \nabla H\star H+R^l_{\nabla H,H}). 
\end{align*}	
\end{proof}

From $p= \meancurv$ and the identities (e.g., \cite[Section 3.1]{Shatah2008a})
\begin{equation}\label{e:geo fact}
\matder \meancurv =-\surflaplace v_n-|B|^2v_n+\tangrad \meancurv\cdot v,\quad \surflaplace \nu =-|B|^2\nu+\tangrad \meancurv,
\end{equation}
it holds on the free-boundary $\parOmega_t$ that 
\begin{equation}\label{e:Dtp}
\matder p=-\surflaplace v\cdot \nu-2 B:\tangrad v=-\surflaplace v_n-|B|^2v_n+\tangrad p\cdot v.
\end{equation} 

Finally, we introduce the error term $R^l_p$ as described in \cite{Julin2024}. We define
\begin{align*}
R^1_p={}&-|B|^2\matder v\cdot\nu +\tangrad p\cdot\matder v+a_1(\nu,\nabla v)\star \nabla^2 v+a_2(\nu,\nabla v)\star B,\\
R^2_p={}&- |B|^2\matder^2 v\cdot\nu+\tangrad p\cdot\matder^2v +a_3(\nu,\nabla v)\star\nabla^2\matder v+a_4(\nu,\nabla v)\star\nabla\matder v\star \nabla^2 v\\
&+a_5(\nu,\nabla v)\star\nabla\matder v\star B+a_6(\nu,\nabla v)\star\nabla^2 v+a_7(\nu,\nabla v)\star B,\\
R^3_p ={}&- |B|^2\matder^3 v\cdot\nu+\tangrad p\cdot\matder^3v+a_8(\nu,\nabla v)\star\nabla^2\matder^2v+a_{9}(\nu,\nabla v)\star\nabla\matder^2v\star \nabla^2v\\
&+a_{10}(\nu,\nabla v)\star\nabla\matder^2v\star B+a_{11}(\nu,\nabla v)\star\nabla^2\matder v\star \nabla\matder v+a_{12}(\nu,\nabla v)\star\nabla^2\matder v\star B\\
&+a_{13}(\nu,\nabla v)\star\nabla \matder v\star \nabla\matder v\star \nabla^2 v+a_{14}(\nu,\nabla v)\star\nabla \matder v\star \nabla\matder v\star B+\operatorname{L.O.T.},\\
R^l_p={}&-|B|^2\matder^{l} v\cdot\nu +\tangrad p\cdot\matder^l v+\sum_{|\alpha|\le 1,|\beta|\le l-1}a_{\alpha,\beta}(\nu,B)\nabla^{1+\alpha_1}\matder^{\beta_1}v\star\cdots\star\nabla^{1+\alpha_{l+1}}\matder^{\beta_{l+1}}v,
\end{align*} 
where $l\ge 4$, $a_i(\nu,\nabla v)$ and $a_{\alpha,\beta}(\nu,B)$  denote the finite $\star$ product.  
\begin{lemma}\label{l:Dt^lp}
On the free-boundary $\parOmega_t$, we have $\matder^{l+1} p= -\surflaplace (\matder^l v\cdot\nu)+R^l_p$ for $l\in\N$. 
\end{lemma}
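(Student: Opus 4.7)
\medskip

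\noindent\textbf{Proof proposal.} The plan is to argue by induction on $l\ge 1$, using the $l=0$ identity \eqref{e:Dtp}, i.e.\ $\matder p=-\surflaplace v_n-|B|^2 v_n+\tangrad p\cdot v$, as the starting point, and applying the material derivative $\matder$ to pass from step $l-1$ to step $l$. At each step the strategy is to commute $\matder$ past $\surflaplace$ using the formula
\begin{equation*}
[\matder,\surflaplace]f=\tangrad^2 f\star \nabla v-\tangrad f\cdot\surflaplace v+B\star\nabla v\star\tangrad f
\end{equation*}
from Lemma \ref{l:formu 1}, and past the normal $\nu$ in $\matder^{l-1}v\cdot\nu$ using $\matder\nu=-\tangrad v_n+Bv_\sigma$. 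All lower-order remnants will then be absorbed into the template defining $R^l_p$.

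For the base case $l=1$, I would apply $\matder$ directly to \eqref{e:Dtp}. The term $-\matder\surflaplace v_n$ equals $-\surflaplace\matder v_n-[\matder,\surflaplace]v_n$; writing $\matder v_n=\matder v\cdot\nu+v\cdot\matder\nu$ and using $\matder\nu=-\tangrad v_n+Bv_\sigma$, I split
\begin{equation*}
-\surflaplace\matder v_n=-\surflaplace(\matder v\cdot\nu)-\surflaplace(v\cdot\matder\nu),
\end{equation*}
and the second summand plus $[\matder,\surflaplace]v_n$ together with $\matder(|B|^2 v_n)$ (expanded via $\matder B=-\tangrad^2 v\star\nu-\tangrad v\star B$) and $\matder(\tangrad p\cdot v)=([\matder,\tangrad]p+\tangrad\matder p)\cdot v+\tangrad p\cdot\matder v$ are all of the form $-|B|^2\matder v\cdot\nu+\tangrad p\cdot\matder v+a_1(\nu,\nabla v)\star\nabla^2v+a_2(\nu,\nabla v)\star B$, which is exactly $R^1_p$.

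For the inductive step, assume $\matder^{l}p=-\surflaplace(\matder^{l-1}v\cdot\nu)+R^{l-1}_p$ and apply $\matder$. The leading piece yields
\begin{equation*}
-\matder\surflaplace(\matder^{l-1}v\cdot\nu)=-\surflaplace(\matder^{l}v\cdot\nu)-\surflaplace(\matder^{l-1}v\cdot\matder\nu)-[\matder,\surflaplace](\matder^{l-1}v\cdot\nu),
\end{equation*}
and the remaining two summands, once $\matder\nu$ is substituted and $[\matder,\surflaplace]$ is expanded, produce contributions of the form $a_{\alpha,\beta}(\nu,B)\nabla^{1+\alpha_1}\matder^{\beta_1}v\star\cdots\star\nabla^{1+\alpha_{l+1}}\matder^{\beta_{l+1}}v$ with $|\alpha|\le 1$ and $|\beta|\le l-1$, which matches the definition of $R^l_p$. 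The remaining $\matder R^{l-1}_p$ is handled term by term: each factor $a_{\alpha,\beta}(\nu,B)$ produces, via $\matder\nu$ and $\matder B$, one extra $\tangrad v$ or $\tangrad^2 v$, which raises the multiplicity by one and lowers one of the $\beta_i$ indices accordingly; factors $\nabla^{1+\alpha_i}\matder^{\beta_i}v$ become $\nabla^{1+\alpha_i}\matder^{\beta_i+1}v$ modulo commutator errors governed by Lemma \ref{l:formu 3}; finally the distinguished terms $-|B|^2\matder^{l}v\cdot\nu$ and $\tangrad p\cdot\matder^l v$ are reproduced by the product rule. A verification via Lemma \ref{l:formu 4} confirms that all resulting indices remain within the admissible ranges for $R^l_p$.

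The main obstacle is the combinatorial bookkeeping in the inductive step: several commutator expansions must be simultaneously controlled, and one must verify that no term escapes the index constraints $|\alpha|\le 1,\ |\beta|\le l-1,\ \text{multiplicity}\le l+1$ built into $R^l_p$. The divergence-free condition and the identity $\tangrad v_n=(\tangrad v)^\top\nu+Bv_\sigma$ from Lemma \ref{l:formu 1} will be used repeatedly to convert normal derivatives of $v$ into tangential ones plus curvature terms, so that every stray derivative either lands on a $\matder^j v$-factor or is absorbed by an $a_{\alpha,\beta}(\nu,B)$-coefficient; this is precisely what makes the template for $R^l_p$ closed under $\matder$.
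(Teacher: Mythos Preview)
Your induction scheme and the commutator identities you invoke are correct, but there is a real gap in how you handle the remainder. You start from the second form in \eqref{e:Dtp}, namely $\matder p=-\surflaplace v_n-|B|^2 v_n+\tangrad p\cdot v$, which carries bare (undifferentiated) factors of $v$. After applying $\matder$, two of your remainder pieces are problematic: $-\surflaplace(v\cdot\matder\nu)$ contains $v\cdot\surflaplace\matder\nu$, and since $\matder\nu=-(\tangrad v)^\top\nu$ this produces a contribution of type $v\star\tangrad^3 v\star\nu$; likewise $\tangrad\matder p\cdot v$, once you re-substitute the $l=0$ formula for $\matder p$, contains $-v\cdot\tangrad\surflaplace v_n$, again a third-order term in $v$. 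Neither fits the template $a_1(\nu,\nabla v)\star\nabla^2 v+a_2(\nu,\nabla v)\star B$, because the coefficients $a_i(\nu,\nabla v)$ are by definition $\star$-products of $\nu$ and $\nabla v$ only (no bare $v$ allowed), and the maximal spatial order on any single $\matder^j v$ factor in $R^l_p$ is $\nabla^2$. These offending terms do cancel in the total sum (they must, since the two representations of $\matder p$ in \eqref{e:Dtp} coincide), but your proposal simply asserts that the sum has the right form without exhibiting this cancellation; at higher $l$ the hidden cancellation becomes the entire difficulty, not a bookkeeping detail.

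The paper sidesteps this by differentiating the \emph{first} form $\matder p=-\surflaplace v\cdot\nu-2B:\tangrad v$, which contains no undifferentiated $v$. One inductively obtains $\matder^{l+1}p=-\surflaplace\matder^l v\cdot\nu-2B:\tangrad\matder^l v+(\text{error manifestly of the required shape})$, and only at the very last step converts the leading part to $-\surflaplace(\matder^l v\cdot\nu)$ via the product rule for $\surflaplace$ together with $\surflaplace\nu=-|B|^2\nu+\tangrad p$ from \eqref{e:geo fact}; this single conversion is precisely what produces the distinguished terms $-|B|^2\matder^l v\cdot\nu+\tangrad p\cdot\matder^l v$ in $R^l_p$. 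If you restart your induction from this first form, the spurious high-order terms never appear and the argument becomes routine.
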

\begin{proof}
For $l=1$, we differentiate \eqref{e:Dtp} to obtain $\matder^2p=-\matder\surflaplace v\cdot \nu  -\surflaplace v\cdot \matder\nu-2\matder B:\tangrad v-2 B:\matder\tangrad v$. Recalling the formulas for $[\matder,\surflaplace],\matder \nu$ and $\matder B$ in Lemma \ref{l:formu 1}, it holds $\matder^2p={}-\surflaplace \matder v\cdot \nu-2 B:\tangrad \matder v+ a_1(\nu,\nabla v)\star \nabla^2 v+a_2(\nu,\nabla v)\star B$. 

For $l=2$, we differentiate $\matder^2 p$ and calculate $[\matder,\surflaplace]\matder v =\tangrad^2\matder v\star\nabla v-\tangrad \matder v\cdot \surflaplace v+B_\parOmega\star \nabla v\star \tangrad \matder v,
\matder B=a_1(\nu,\nabla v)\star B+a_2(\nu,\nabla v)\star \nabla^2 v,
\matder\tangrad \matder v=\tangrad \matder^2 v+\tangrad v\star \tangrad \matder v,
\matder a(\nu,\nabla v)=b(\nu,\nabla v)\star \nabla\matder v,  
\matder \nabla^2 v=\nabla^2 v\star \nabla v+\nabla^2 \matder v$ to obtain $\matder^3p=-\surflaplace \matder^2 v\cdot \nu-2 B:\tangrad \matder^2 v+a_3(\nu,\nabla v)\star\nabla^2\matder v+a_4(\nu,\nabla v)\star\nabla\matder v\star \nabla^2 v+a_5(\nu,\nabla v)\star\nabla\matder v\star B+a_6(\nu,\nabla v)\star\nabla^2 v+a_7(\nu,\nabla v)\star B$. 
We can obtain the case of $l=3$ in the same way and the remaining proof is similar to \cite[Lemma 4.7]{Julin2024}. 
\end{proof}

\section{Time derivatives of the energy functionals}\label{s:d/dt}

In this section, we compute the time derivative of the energy functional $\energy_l(t)$ by applying Reynolds transport theorem and \eqref{e:reynold 2}. The main result in this section is the following proposition.

\begin{proposition}\label{p:d/dt} 
Assume that the a priori assumptions \eqref{e:a priori} hold for some $T>0$. Then, we have
\begin{align*}
\frac{d}{dt}\barenergy(t)
\le{}& C\sum_{l=1}^{3}\left( \|R^l_{\Rdiv}\|_{H^{1/2}(\Omega_t)}^2+\|R^l_{\Rbulk}\|_{L^2(\Omega_t)}^2+\|R^l_{\nabla H,H}\|_{L^2(\Omega_t)}^2+\|R^l_p\|_{H^{1/2}(\parOmega_t)}^2\right)\\
&+C \left( 1+\|\nabla^2 p\|_{L^2(\Omega_t)}^2\right) \barE(t), 
\end{align*}
where the constant $C$ depends on $T,\mathcal{N}_T$, and $\mathcal{M}_T$.

Moreover, we further assume that $\sup_{0\le t< T}E_{l-1}(t)\le C$ for $l\ge 4$. Then, it holds
\begin{equation*} 
\frac{d}{dt}\energy_l(t)
\le  C\left( E_l(t)+\|R^l_{\Rdiv}\|_{H^{1/2}(\Omega_t)}^2+\|R^l_{\Rbulk}\|_{L^2(\Omega_t)}^2+\|R^l_{\nabla H,H}\|_{L^2(\Omega_t)}^2+\|R^l_p\|_{H^{1/2}(\parOmega_t)}^2\right) , 
\end{equation*}
for $l\ge 4$, where the constant $C$ depends on $T,\mathcal{N}_T,\mathcal{M}_T$, and $\sup_{0\le t< T}E_{l-1}(t)$.
\end{proposition}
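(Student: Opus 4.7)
The plan is to differentiate each summand of $\energy_l(t)$ using the Reynolds identities (Lemmas \ref{l:reynold 1}--\ref{l:reynold 2}), substitute the evolution equations to convert $\matder v$ and $\matder H$ into $-\nabla p+H\cdot\nabla H$ and $H\cdot\nabla v$ respectively, integrate by parts in the bulk, and then use Lemma \ref{l:Dt^lp} on the free boundary to engineer a cancellation with the tangential-gradient term. Commuting $\matder^{l+1}$ past $\nabla$ through \eqref{e:[Dt^l,nab]p} and applying Lemmas \ref{l:formu 3}, \ref{l:nab H, nab H} and \ref{l:formu 5} rewrites all bulk contributions precisely as the error tensors $R^l_{\Rbulk}$, $R^l_{\Rdiv}$ and $R^l_{\nabla H,H}$, together with one surface integral of the form $-\int_{\parOmega_t}\matder^{l+1}p\,(\matder^{l+1}v\cdot\nu)\,dS$.

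The cancellation step is the whole point of the design of $\energy_l(t)$. By Lemma \ref{l:Dt^lp}, $\matder^{l+1}p=-\surflaplace(\matder^l v\cdot\nu)+R^l_p$ on $\parOmega_t$; a single surface integration by parts then converts the leading piece of the above surface integral into $\int_{\parOmega_t}\tangrad(\matder^l v\cdot\nu)\cdot\tangrad(\matder^{l+1}v\cdot\nu)\,dS$, which is exactly what Reynolds' formula yields when differentiating $\tfrac12\|\tangrad(\matder^l v\cdot\nu)\|^2_{L^2(\parOmega_t)}$. The commutator and curvature corrections arising from $\matder\nu$ and $\matder\tangrad$ in Lemma \ref{l:formu 1} are strictly lower order and absorb either into the $R^l_p$-term (through Lemma \ref{l:formu 4}) or into the $(1+\|\nabla^2 p\|_{L^2(\Omega_t)}^2)\barE(t)$ budget. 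The remaining piece, $\int_{\parOmega_t}R^l_p\,(\matder^{l+1}v\cdot\nu)\,dS$, is treated by $H^{1/2}$--$H^{-1/2}$ duality against $\tangrad(\matder^l v\cdot\nu)$, producing the advertised $\|R^l_p\|_{H^{1/2}(\parOmega_t)}^2$.

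The main obstacle comes from the magnetic-tension contribution $\matder^{l+1}(H\cdot\nabla H)$ paired against $\matder^{l+1}v$: by Lemma \ref{l:nab H, nab H} its top-order component is $\nabla^2\matder^l v$, which is not controlled by $\barE(t)$ or $E_l(t)$, so a naive integration by parts on $\matder^{l+1}v$ is impossible. The resolution is to move the divergence onto the magnetic factor and invoke Lemma \ref{l:divDt}, where the combined divergence-free conditions $\dive v=\dive H=0$ collapse the apparent worst term by one spatial derivative, leaving only $\nabla^3\matder^{l-1}v\star\nabla v\star H\star H$ plus strictly lower-order pieces that fit inside $R^l_{\nabla H,H}$ and $R^l_{\Rbulk}$. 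The boundary condition $H\cdot\nu=0$ on $\parOmega_t$ kills the trace produced by this integration by parts, so the entire magnetic contribution is absorbed into $\|R^l_{\nabla H,H}\|_{L^2(\Omega_t)}^2$ and $\|R^l_{\Rbulk}\|_{L^2(\Omega_t)}^2$.

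For the higher-order statement $l\ge 4$, the identical scheme is run inductively in $l$: the hypothesis $\sup\limits_{0\le t<T}E_{l-1}(t)\le C$ absorbs every factor of the $\star$-products whose derivative order stays strictly below the top level of $E_l(t)$, and in particular allows the pressure prefactor $1+\|\nabla^2 p\|_{L^2(\Omega_t)}^2$ of the low-order estimate to be dropped, since $\matder^k p$ for $k\le l-1$ is controlled through the induction hypothesis combined with the pressure estimates of Section \ref{s:p est}. Only the genuinely top-order $R$-tensors and a single clean factor of $E_l(t)$ survive, yielding the stated inequality.
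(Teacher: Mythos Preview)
Your overall architecture (Reynolds transport, the $\surflaplace$-cancellation between $-\int_{\parOmega_t}\matder^{l+1}p\,(\matder^{l+1}v\cdot\nu)\,dS$ and $\tfrac{d}{dt}\tfrac12\|\tangrad(\matder^l v\cdot\nu)\|_{L^2(\parOmega_t)}^2$ via Lemma~\ref{l:Dt^lp}, and routing the commutator $[\matder^{l+1},\nabla]p$ through \eqref{e:[Dt^l,nab]p}) matches the paper. But there are two genuine gaps.

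\textbf{The symmetric magnetic cancellation is missing.} The direct term $\int_{\Omega_t}\matder^{l+1}(H\cdot\nabla H)\cdot\matder^{l+1}v\,dx$ is \emph{not} handled in the paper by Lemma~\ref{l:divDt}. Instead one isolates the top piece $J^l_1=\int_{\Omega_t}H^j\partial_j(\matder^{l+1}H_i)\,\matder^{l+1}v^i\,dx$ and observes that differentiating $\tfrac12\|\matder^{l+1}H\|_{L^2}^2$ produces the partner $J^l_2=\int_{\Omega_t}H^j\partial_j(\matder^{l+1}v^i)\,\matder^{l+1}H_i\,dx$; since $\dive H=0$ and $H\cdot\nu=0$, $J^l_1+J^l_2=0$. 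Your alternative (``move the divergence onto the magnetic factor and invoke Lemma~\ref{l:divDt}'') cannot work on this integral as stated: there is no divergence to move, and any integration by parts throws a full derivative onto $\matder^{l+1}v$, which is uncontrolled.

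\textbf{The bulk term $\int_{\Omega_t}\matder^{l+1}p\,\dive\matder^{l+1}v\,dx$ is not an error tensor.} After the boundary cancellation, this integral ($I^l_{11}$ in the paper) survives, and your claim that ``all bulk contributions'' reduce to $R^l_{\Rbulk},R^l_{\Rdiv},R^l_{\nabla H,H}$ is false here: $\dive\matder^{l+1}v=R^l_{\Rdiv}$ is fine, but $\matder^{l+1}p$ is \emph{not} in the energy and is not bounded in $L^2$ by $E_l$ or $\barE$. The paper's device is to solve $-\laplace u=\dive\matder^{l+1}v$ in $\Omega_t$, $u=0$ on $\parOmega_t$, and rewrite $I^l_{11}=-\int_{\Omega_t}\laplace\matder^{l+1}p\,u\,dx-\int_{\parOmega_t}\matder^{l+1}p\,\partial_\nu u\,dS$. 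Lemma~\ref{l:lapDtp} expresses $-\laplace\matder^{l+1}p$ in divergence form; Lemma~\ref{l:divDt} is then used \emph{here} (not on the direct magnetic integral) to control $\int_{\Omega_t}\dive\matder^{l+1}(H\cdot\nabla H)\,u\,dx$, and the boundary piece is handled with Lemma~\ref{l:Dt^lp} and Lemma~\ref{l:3.8 jul}. This is also where the $\|R^l_{\Rdiv}\|_{H^{1/2}(\Omega_t)}$ norm (rather than $L^2$) in the statement is actually generated. Your magnetic paragraph appears to be describing this step but misplaces it as a treatment of the direct tension term; the auxiliary elliptic problem is the missing idea.
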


Denote $I^l_1(t)=\frac{1}{2}\|\matder^{l+1}v\|_{L^2(\Omega_t)}^2,I^l_2(t)=\frac{1}{2}\|\matder^{l+1}H\|_{L^2(\Omega_t)}^2,I^l_3(t)=\frac{1}{2}\|\tangrad(\matder^{l}v\cdot \nu)\|_{L^2(\parOmega_t)}^2,I^l_4(t)=\frac{1}{2}\|\nabla^{\lfloor \frac{3l+1}{2}\rfloor} \curl v\|_{L^2(\Omega_t)}^2$ and $I^l_5(t)=\frac{1}{2}\|\nabla^{\lfloor \frac{3l+1}{2}\rfloor} \curl H\|_{L^2(\Omega_t)}^2$. 
We will apply Reynolds transport theorem and  \eqref{e:reynold 2} several times and we start with $I^l_1(t)$. From \eqref{e:mhd} and the divergence theorem, 
\begin{align*}
\frac{d}{dt}I^l_1(t) 
={}&-\int_{\Omega_t}\matder^{l+1}\nabla p\cdot \matder^{l+1}vdx+\int_{\Omega_t}\matder^{l+1}(H\cdot \nabla H)\cdot \matder^{l+1}vdx\\
={}&-\int_{\Omega_t}\nabla\matder^{l+1} p\cdot \matder^{l+1}vdx-\int_{\Omega_t}[\matder^{l+1},\nabla] p\cdot \matder^{l+1}vdx+\int_{\Omega_t}\matder^{l+1}(H^j\partial_j  H_i) \matder^{l+1}v^idx\\
={}&-\int_{\Omega_t}\dive (\matder^{l+1} p \matder^{l+1}v)dx+\int_{\Omega_t} \matder^{l+1} p\dive \matder^{l+1}vdx\\
&-\int_{\Omega_t}[\matder^{l+1},\nabla] p\cdot \matder^{l+1}vdx+\int_{\Omega_t}\matder^{l+1}(H^j\partial_j  H_i) \matder^{l+1}v^idx\\
\le{}& \underbrace{\int_{\Omega_t}H^j\partial_j(\matder^{l+1}  H_i) \matder^{l+1}v^idx}_{\eqqcolon J^l_{1}(t)}\underbrace{-\int_{\parOmega_t} \matder^{l+1} p (\matder^{l+1}v \cdot \nu) dS}_{\eqqcolon K^l_{1}(t)}+\|\matder^{l+1}v\|_{L^2(\Omega_t)}^2\\
&+\underbrace{\int_{\Omega_t} \matder^{l+1} p\dive \matder^{l+1}vdx}_{\eqqcolon I^l_{11}(t)}+\underbrace{\|[\matder^{l+1},\nabla] p\|_{L^2(\Omega_t)}^2}_{\eqqcolon I^l_{12}(t)}\\ &+\underbrace{\sum_{k=0}^{l}\int_{\Omega_t}\matder^k  H^j[\matder^{l+1-k},\partial_j]H_i \matder^{l+1}v^idx}_{\eqqcolon I^l_{13}(t)}+\underbrace{\sum_{k=1}^{l+1}\int_{\Omega_t}\matder^k  H^j\partial_j\matder^{l+1-k}H_i \matder^{l+1}v^idx}_{\eqqcolon I^l_{14}(t)},
\end{align*}
where we have used the fact that 
\begin{align*}
&\matder^{l+1}(H^j\partial_j  H_i) \matder^{l+1}v^i\\
={}&H^j\partial_j(\matder^{l+1}  H_i) \matder^{l+1}v^i+\sum_{k=0}^{l}\matder^k  H^j[\matder^{l+1-k},\partial_j]H_i \matder^{l+1}v^i+\sum_{k=1}^{l+1}\matder^k  H^j\partial_j\matder^{l+1-k}H_i \matder^{l+1}v^i.
\end{align*}
Similarly, for the magnetic field, it follows that
\begin{align*}
\frac{d}{dt}I^l_2(t) 
={}&\underbrace{\int_{\Omega_t}H^j\partial_j(\matder^{l+1}v^i) \matder^{l+1}H_i dx}_{\eqqcolon J^l_{2}(t)}+\underbrace{\sum_{k=0}^{l}\int_{\Omega_t}\matder^k  H^j[\matder^{l+1-k},\partial_j]v^i \matder^{l+1}H_idx}_{\eqqcolon I^l_{21}(t)}\\
&
+\underbrace{\sum_{k=1}^{l+1}\int_{\Omega_t}\matder^k  H^j\partial_j\matder^{l+1-k}v^i\matder^{l+1}H_idx}_{\eqqcolon I^l_{22}(t)}.
\end{align*}
Recalling the divergence-free condition and $H\cdot\nu=0$ on $\parOmega_t$, it is clear that $J^l_{1}(t)+J^l_{2}(t)=0$, and we obtain $\frac{d}{dt}( I^l_1(t)+I^l_2(t)) \le K^l_{1}(t)+\sum_{i=1}^4I^l_{1i}(t)+I^l_{21}(t)+I^l_{22}(t)+\|\matder^{l+1}v\|_{L^2(\Omega_t)}^2$. 

To control the third term,  we apply  Lemma \ref{l:formu 1} to deduce
\begin{align*}
\frac{d}{dt}I^l_3(t) 
={}&\int_{\parOmega_t}-(\tangrad v)^\top \tangrad(\matder^{l}v\cdot \nu)\cdot \tangrad(\matder^{l}v\cdot \nu)dS
+\frac{1}{2}\int_{\parOmega_t}|\tangrad(\matder^{l}v\cdot \nu)|^2\tandive vdS\\
&+\int_{\parOmega_t}\tangrad(\matder^{l+1}v\cdot \nu)\cdot \tangrad(\matder^{l}v\cdot \nu)dS+\int_{\parOmega_t}\tangrad(\matder^{l}v\cdot \matder\nu)\cdot \tangrad(\matder^{l}v\cdot \nu)dS\\
\le{}&\underbrace{-\int_{\parOmega_t}(\matder^{l+1}v\cdot \nu)\cdot \surflaplace(\matder^{l}v\cdot \nu)dS}_{\eqqcolon K^l_3(t)}
+\underbrace{\|\tangrad(\matder^{l}v\cdot\matder\nu)\|_{L^2(\parOmega_t)}^2}_{\eqqcolon I^l_{31}(t)}\\ 
&+C(\|\tangrad v\|_{L^\infty(\parOmega_t)}+1)
\|\tangrad(\matder^{l}v\cdot \nu)\|_{L^2(\parOmega_t)}^2.
\end{align*}
Finally, to compute the last two terms involving the $\curl$, we denote $\mu_l\coloneqq \lfloor  (3l+1)/2\rfloor$. We then utilize the divergence-free condition and the fact that $H\cdot\nu=0$ on $\parOmega_t$ to obtain $\int_{\Omega_t}\sum_{|\alpha|=l}(H\cdot \nabla) (\nabla^{\alpha} \curl H:\nabla^{\alpha} \curl v+\nabla^{\alpha} \curl v:\nabla^{\alpha} \curl H)dx=0$. 
Therefore, from Lemma \ref{l:formu 5}, it follows that
\begin{align*}
&\frac{d}{dt}I^l_4(t)-	\int_{\Omega_t}\sum_{|\alpha|=l}(H\cdot \nabla)  \nabla^{\alpha} \curl H:\nabla^{\alpha} \curl vdx\\
\le{}& C(\|\nabla v\|_{L^\infty(\Omega_t)}+1)\|\nabla^{\mu_l+1}v\|_{L^2(\Omega_t)}^2+\|\nabla H  \|_{L^\infty(\Omega_t)}^2\|\curl H\|_{H^{\mu_l}(\Omega_t)}^2\\
&+\|\curl H  \|_{L^\infty(\Omega_t)}^2\|\nabla H\|_{H^{\mu_l}(\Omega_t)}^2+\|\nabla v \|_{L^\infty(\Omega_t)}^2\| \nabla v\|_{H^{\mu_l}(\Omega_t)}^2,\\
&\frac{d}{dt}I^l_5(t)-	\int_{\Omega_t}\sum_{|\alpha|=l}(H\cdot \nabla)  \nabla^{\alpha} \curl v:\nabla^{\alpha} \curl Hdx\\ 
\le{}& C(\|\nabla v\|_{L^\infty(\Omega_t)}+1)\|\nabla^{\mu_l} \curl H\|_{L^2(\Omega_t)}^2+\|\nabla v\|_{H^{\mu_l}(\Omega_t)}^2\|\nabla H\|_{L^{\infty}(\Omega_t)}^2\\
&+\|\nabla H\|_{H^{\mu_l}(\Omega_t)}^2\|\nabla v\|_{L^{\infty}(\Omega_t)}^2.
\end{align*} 
\begin{proof}[Proof of Proposition \ref{p:d/dt}]
By \eqref{e:a priori}, one has $\|\tangrad v\|_{L^\infty(\parOmega_t)}\le C\|\nabla v\|_{L^\infty(\Omega_t)}\le C$.  This, combined with the above calculations and applying Lemma \ref{l:Dt^lp}, $\|\nabla H\|_{L^\infty(\Omega_t)}  \le C$ by \eqref{e:a priori},  together with  the definition of $\barE(t)$,  we obtain $K^l_1(t)+K^l_3(t)=-\int_{\parOmega_t}R^{l}_p (\matder^{l+1}v \cdot \nu)dS$, and
\begin{align*}
\frac{d}{dt}\barenergy(t)
&\le C\barE(t)+C\sum_{l=1}^{3}\left(-\int_{\parOmega_t}R^{l}_p (\matder^{l+1}v \cdot \nu) dS+\sum_{i=1}^{4}I^l_{1i}(t)+I^l_{31}(t)+I^l_{21}(t)+I^l_{22}(t)\right),\\
\frac{d}{dt}\energy_l(t)&\le CE_l(t)+C\left(-\int_{\parOmega_t}R^{l}_p (\matder^{l+1}v \cdot \nu) dS+\sum_{i=1}^{4}I^l_{1i}(t)+I^l_{31}(t) +I^l_{21}(t)+I^l_{22}(t)\right),\quad l\ge 4.
\end{align*}
We divide the remaining proof into six steps. 

\textbf{Step 1.} We control $I^l_{14}(t)$ and $I^l_{22}(t)$. We omit the case of $l=1$, and assume $F=v,G=H$ or $F=H,G=v$ respectively.  
In the case of $l=2$, from the fact that 
\begin{align}
\|\nabla \matder H\|_{L^2(\Omega_t)}^2&\le  \|\nabla (H\cdot\nabla v)\|_{L^2(\Omega_t)}^2\le C,\label{e:nabDtH}\\
\|\nabla \matder v\|_{L^2(\Omega_t)}^2&\le  \|\nabla (H\cdot\nabla H)\|_{L^2(\Omega_t)}^2+\|\nabla^2p\|_{L^2(\Omega_t)}^2 \le C(1+\|\nabla^2 p\|_{L^2(\Omega_t)}^2),\label{e:nabDtv}
\end{align}
it follows that
\begin{align*}
&\sum_{k=1}^{3}\int_{\Omega_t}\matder^k  H^j\partial_j\matder^{3-k}F_i \matder^{3}G^idx\\ 
\le {}& C(E_2(t)+\|H\cdot\nabla v\|_{L^2(\Omega_t)}^2\|\matder^2F\|_{H^{3}(\Omega_t)}^2+\|\matder^2 H\|_{H^{2}(\Omega_t)}^2\|\nabla\matder  F\|_{L^{2}(\Omega_t)}^2\\
& +\|\matder^{3}  H \|_{L^2(\Omega_t)}^2\|\nabla F\|_{L^\infty(\Omega_t)}^2)\le  C(1+\|\nabla^2 p\|_{L^2(\Omega_t)}^2)\barE(t).
\end{align*}
As for $l=3$, again by \eqref{e:nabDtH} and \eqref{e:nabDtv}, we obtain
\begin{align*}
&\sum_{k=1}^{4}\int_{\Omega_t}\matder^k  H^j\partial_j\matder^{4-k}F_i \matder^{4}G^idx\\ 
\le{}& C(E_3(t)+\|H\cdot\nabla v\|_{L^6(\Omega_t)}^2 \| \matder^{3} F\|_{H^{3/2}(\Omega_t)}^2+\|\matder^2 H\|_{L^2(\Omega_t)}^2\|\matder^{2} F\|_{H^{3}(\Omega_t)}^2 \\
&\quad +\|\matder^2(H\cdot\nabla v)\|_{L^\infty(\Omega_t)}^2\|\nabla\matder F\|_{L^2(\Omega_t)}^2+\|\matder^{4}  H\|_{L^2(\Omega_t)}^2\|\nabla F\|_{L^\infty(\Omega_t)}^2)
\\
\le{}& C(1+\|\nabla^2 p\|_{L^{2}(\Omega_t)}^2)\barE(t),
\end{align*}
where we have used
\begin{align}
\|\matder^2 H\|_{L^2(\Omega_t)}^2\le{}& C(\|\matder H\star  \nabla v\|_{L^2(\Omega_t)}^2+\| H\star\matder\nabla v\|_{L^2(\Omega_t)}^2) \le C(1+\|\nabla^2 p\|_{L^2(\Omega_t)}^2),\label{e:Dt^2H}\\
\|\matder^3H\|_{L^\infty(\Omega_t)}^2 
\le{}&\|\matder^2 H\star \nabla v\|_{L^\infty(\Omega_t)}^2+\|\matder H\star\matder\nabla v\|_{L^\infty(\Omega_t)}^2+\|H\star\matder^2\nabla v\|_{L^\infty(\Omega_t)}^2\nonumber\\
\le{}&C(\|\matder^2 H\|_{L^\infty(\Omega_t)}^2+\|[\matder,\nabla] v\|_{L^\infty(\Omega_t)}^2+\|\nabla\matder v\|_{L^\infty(\Omega_t)}^2\nonumber\\
&\quad +\|[\matder^2,\nabla] v\|_{L^\infty(\Omega_t)}^2+\|\nabla\matder^2 v\|_{L^\infty(\Omega_t)}^2)\le C\barE(t),\nonumber
\end{align}
by utilizing \eqref{e:a priori}, Lemmas \ref{l:formu 1} and \ref{l:formu 3}. Additionally, one order material derivative has been substituted with the spatial derivative of the velocity field. As $l\ge 4 $, we use the hypotheses  $E_{l-1}(t)\le C$ to obtain
\begin{align*}
& \sum_{k=1}^{l+1}\int_{\Omega_t}\matder^k  H^j\partial_j\matder^{l+1-k}F_i \matder^{l+1}G^idx\\ 
\le{}& C(\sum_{k=2}^{l}\|\matder^k H\|_{H^1(\Omega_t)}^2 \|\matder^{l+1-k} F\|_{H^{3/2}(\Omega_t)}^2+\|\matder  H^j\partial_j\matder^{l}F\|_{L^2(\Omega_t)}^2+E_l(t))\\
\le{}& CE_l(t)E_{l-1}(t)+CE_l(t)+C\|\matder  H\|_{L^6(\Omega_t)}^2\|\nabla\matder^{l}F\|_{L^3(\Omega_t)}^2\le CE_l(t).
\end{align*} 

\textbf{Step 2.} We control $I^l_{13}(t)$ and $I^l_{21}(t)$. As before, we assume $F=v,G=H$ or $F=H,G=v$.  
We only consider the case of $l\ge 3$. 
In fact, from 
$[\matder^{j}, \nabla]$ in Lemma \ref{l:formu 3}, \eqref{e:nabDtH}, \eqref{e:nabDtv} and \eqref{e:Dt^2H}, it holds 
\begin{align*}
&\sum_{k=0}^{3}\int_{\Omega_t}\matder^k  H^j[\matder^{4-k},\partial_j]F_i \matder^{4}G^idx\\
\le{}&C(E_3(t)+ \|\matder^3 H^j\partial_jv^k\partial_kF\|_{L^2(\Omega_t)}^2\\
&+\|\matder^2 H\star(\nabla v \star \nabla F+
\nabla \matder v \star \nabla F+
\nabla v \star \nabla \matder F 
+\nabla v\star\nabla v\star\nabla F)\|_{L^2(\Omega_t)}^2\\
&+\|\matder H\star
(\nabla \matder^2 v \star \nabla F+\nabla \matder v \star \nabla \matder  F+\nabla  v \star \nabla \matder^2 F\\
&\quad \quad \quad \quad \quad +\nabla \matder v\star\nabla v\star\nabla F+\nabla v\star \nabla v\star \nabla \matder F+\operatorname{L.O.T.})\|_{L^2(\Omega_t)}^2\\
&+\|H\star(\nabla \matder^3 v\star\nabla F+\nabla \matder^2 v\star\nabla\matder F+\nabla\matder v\star\nabla\matder^2F\\
&\quad\quad\quad\ \ \ +\nabla v\star\nabla\matder^3 F+\operatorname{L.O.T.}) \|_{L^2(\Omega_t)}^2) 
\le C(1+\|\nabla^2 p\|_{L^2(\Omega_t)}^2)\barE(t).
\end{align*}
For $l\ge 4$, from Lemma \ref{l:formu 3} and the assumption $E_{l-1}(t)\le C$, we deduce
\begin{align*}
&\sum_{k=0}^{l}\int_{\Omega_t}\matder^k  H^j[\matder^{l+1-k},\partial_j]F_i \matder^{l+1}G^idx\\ 
\le{}&C(E_l(t)+\|\matder^l H^j\partial_jv^k\partial_kF\|_{L^2(\Omega_t)}^2 
\\
&\quad +\sum_{k=0}^{l-1} \|\matder^k  H\star\sum_{2\le m\le l+2-k}\sum_{|\beta| \leq l+2-k-m}\nabla \matder^{\beta_1} v \star \cdots \star \nabla \matder^{\beta_{m-1}} v\\
&\quad \quad \quad \quad \quad \quad \quad \quad \quad \quad \quad \quad \quad \quad \quad \quad \quad \ \ \ \star \nabla \matder^{\beta_{m}}F \|_{L^2(\Omega_t)}^2)\\ 
\le{}&CE_{l-1}(t)E_l(t)+CE_l(t)\le CE_l(t).
\end{align*} 

\textbf{Step 3.} To estimate $\int_{\parOmega_t} R^l_p (\matder^{l+1}v \cdot \nu) dS$, we apply Lemma \ref{l:formu 5} and the normal trace theorem (e.g., \cite[Theorem 3.1]{Cheng2007}) 
to obtain $\|\matder^{l+1}v\cdot\nu\|_{H^{-1/2}(\parOmega_t)}\le C(	\|\matder^{l+1}v\|_{L^{2}(\Omega_t)}+\|\dive \matder^{l+1}v\|_{H^{-1}(\Omega_t)})$.
Therefore,  it follows that
\begin{align*}
|\int_{\parOmega_t} R^l_p (\matder^{l+1}v \cdot \nu) dS| 
&\le C(\barE(t)+\|R^l_{\Rdiv}\|_{L^{2}(\Omega_t)}^2+\|R^l_p\|_{H^{1/2}(\parOmega_t)}^2),\  l\le 3,\\ 
|\int_{\parOmega_t} R^l_p (\matder^{l+1}v \cdot \nu) dS|&\le C(E_l(t)+\|R^l_{\Rdiv}\|_{L^{2}(\Omega_t)}^2+\|R^l_p\|_{H^{1/2}(\parOmega_t)}^2),\  l\ge 4.
\end{align*} 

\textbf{Step 4.} We estimate $I^l_{31}(t)$. 
We only present estimates for $l\ge 3$, and the cases of $l\le 2$ are easier. Actually, by the a priori assumptions \eqref{e:a priori} and the trace theorem, one has
\begin{align*}
&\|\tangrad(\matder^3 v\cdot\matder\nu)\|_{L^2(\parOmega_t)}^2\\
\le{}& \|\tangrad \matder^3 v\star\matder\nu \|_{L^2(\parOmega_t)}^2+\|\matder^3 v\star \tangrad\matder\nu\|_{L^2(\parOmega_t)}^2\\
\le{}& C (\|\matder  \nu\|_{L^\infty(\parOmega_t)}^2\|\matder^3 v\|_{H^{3/2}(\Omega_t)}^2+\underbrace{\|\matder^3 v\star \tangrad^2v\star\nu\|_{L^2(\parOmega_t)}^2}_{\eqqcolon L^3_{31}(t)}+\|\matder^3 v\star \tangrad v\star \tangrad \nu\|_{L^2(\parOmega_t)}^2)\le C \barE(t).
\end{align*} 
Above, we have applied the Sobolev embedding, i.e., for $p^{-1}+q^{-1}=2^{-1},p=2\delta^{-1}$ with $\delta>0$ small enough, it holds $L^3_{31}(t) 
\le C\|\matder^3 v\|_{H^{1-\delta}(\parOmega_t)}^2\| \tangrad^2v\|_{H^{\delta}(\parOmega_t)}^2$,
and $\|\matder^3 v\|_{H^{1-\delta}(\parOmega_t)}^2\| \tangrad^2v\|_{H^{\delta}(\parOmega_t)}^2\le \|\matder^3 v\|_{H^{3/2-\delta}(\Omega_t)}^2\|v\|_{H^{5/2+\delta}(\Omega_t)}^2 \le C\barE(t)$, by using the trace theorem.  As for $l\ge 4$,  it follows that
\begin{align*}
\|\tangrad(\matder^l v\cdot\matder\nu)\|_{L^2(\parOmega_t)}^2 
\le{}& C (\|\matder  \nu\|_{L^\infty(\parOmega_t)}^2\|\matder^l v\|_{H^1(\parOmega_t)}^2+\|\matder  \nu\|_{W^{1,4}(\parOmega_t)}^2\|\matder^l v\|_{L^4(\parOmega_t)}^2)\\
\le{}& C(\|\matder^l v\|_{H^{3/2}(\Omega_t)}^2+E_{l-1}(t)\|\matder^l v\|_{H^{1}(\Omega_t)}^2)\le C E_l(t),
\end{align*}
where we have used $\matder\nu= \tangrad v\star \nu$ from Lemma \ref{l:formu 1} and $\|\nu\|_{H^{2+\delta}(\parOmega_t)}\le C$ by \eqref{e:a priori} together with \eqref{e:nu by h}. 

\textbf{Step 5.} For $I^l_{12}(t)$, we recall that it holds $[\matder^{l+1},\nabla ]p=\sum_{\beta_1 \leq l}  \nabla  \matder^{\beta_1} v  \star  \nabla H\star H+R_{\Rbulk}^l+R^l_{\nabla H,H}$ by Lemma \ref{l:[Dt^l,nab]p}. Clearly, we have $\|\sum_{\beta_1 \leq l}  \nabla  \matder^{\beta_1} v  \star  \nabla H\star H\|_{L^2(\Omega_t)}^2 \le C\barE(t)$  for $l\le 3$, and $\|\sum_{\beta_1 \leq l}  \nabla  \matder^{\beta_1} v  \star  \nabla H\star H\|_{L^2(\Omega_t)}^2 \le CE_l(t)$ as $l\ge 4$. 

\textbf{Step 6.} Finally, controlling  $I^l_{11}(t)$ is trickier. Let $u$ be a solution to 
\begin{equation*}
\begin{cases}
-\laplace u=\dive \matder^{l+1}v, & \text{ in } \Omega_t,\\
u=0, &  \text{ on } \parOmega_t,
\end{cases}
\end{equation*}
where $l\ge 1$. We first recall the elliptic estimates (see, e.g., {\cite[Proposition 3.8]{Julin2024}})
\begin{equation}\label{e:elliptic}
\|\partial_\nu u\|_{H^1(\parOmega_t)}+\|\nabla u\|_{H^{3/2}(\Omega_t)} \leq C\|\dive \matder^{l+1}v\|_{H^{1/2}(\Omega_t)}.
\end{equation}
Then, we integrate by parts to obtain $I^l_{11}(t)
=-\int_{\Omega_t}\laplace\matder^{l+1} p udx-\int_{\parOmega_t}\matder^{l+1} p\partial_\nu udS\eqqcolon I^l_{111}(t)+I^l_{112}(t)$. 
Again by integration by parts, Lemma \ref{l:lapDtp} and the divergence theorem, it follows that
\begin{align*}
I^l_{111}(t)
={}&\int_{\Omega_t}(v \otimes \matder^{l+1} v ):\nabla^2 udx-\int_{\Omega_t}( R_{\Rbulk}^l+R^l_{\nabla H,H}+\sum_{\beta_1 \leq l}  \nabla  \matder^{\beta_1} v  \star  \nabla H\star H)\cdot \nabla u dx\\
&-\int_{\Omega_t}\dive\matder^{l+1}(H\cdot\nabla H)  udx-\int_{\parOmega_t}v^i\matder^{l+1} v^j\partial_i u\nu_jdS\\
\le{}& C(\|u\|_{H^2(\Omega_t)}^2+E_l(t)+\|R_{\Rbulk}^l\|_{L^2(\Omega_t)}^2+\|R^l_{\nabla H,H}\|_{L^2(\Omega_t)}^2)\\
&+\underbrace{\int_{\Omega_t}\dive (v^i\matder^{l+1} v\partial_i u)dx}_{\eqqcolon L^l_{1111}(t)}\underbrace{-\int_{\Omega_t}\dive\matder^{l+1}(H\cdot\nabla H)  udx}_{\eqqcolon L^l_{1112}(t)}.
\end{align*}
We estimate the first term by using  Lemma \ref{l:formu 5}. Indeed, it holds
\begin{align*}
|L^l_{1111}(t)|
={}&|\int_{\Omega_t}\nabla v\star\matder^{l+1} v\star\nabla u+ v\star\dive\matder^{l+1} v\star\nabla u+v\star \matder^{l+1} v\star \nabla^2 udx|\\
\le{}&C(\|u\|_{H^2(\Omega_t)}^2+E_l(t)+\|R^l_{\Rdiv}\|_{L^2(\Omega_t)}^2).
\end{align*}

To control $L^l_{1112}(t)$, it is important to note that the integration by parts method used previously is not applicable. However, as indicated in Lemmas \ref{l:nab H, nab H} and \ref{l:divDt}, a one-order material derivative can be substituted for a one-order spatial derivative due to the divergence-free condition.  
In fact, we have from Lemma \ref{l:divDt} that  
$\dive\matder^{l+1}(H\cdot \nabla H)=\partial_i\partial_m\matder^{l}  v^j \partial_jH^mH^i+\nabla^3\matder^{l-1}v\star H\star H+\operatorname{L.O.T.}$,
and 
\begin{align*}
|L^l_{1112}(t)|\le {}&|\int_{\Omega_t}\partial_i\partial_m\matder^{l}  v^j \partial_jH^mH^iudx|+C\|u\|_{L^2(\Omega_t)}^2+C\|\nabla^3\matder^{l-1}v\star H\star H\|_{L^2(\Omega_t)}^2+M^l_{1112}(t)\\
\le {}& C\|u\|_{H^1(\Omega_t)}^2+CE_l(t)+M^l_{1112}(t),
\end{align*}
where we have used $H\cdot\nu=0$, and 
\begin{align*}
|\int_{\Omega_t}\partial_i\partial_m\matder^{l}  v^j \partial_jH^mH^iudx| 
={}&|\int_{\Omega_t}\partial_m\matder^{l}  v^j \partial_i\partial_jH^mH^iu+\partial_m\matder^{l}  v^j \partial_jH^mH^i\partial_i udx|\\ 
\le{}& CE_l(t)+C\|u\|_{H^1(\Omega_t)}^2,
\end{align*}
by integration by parts.
Also, $M^l_{1112}(t)$ contains lower-order terms (at most $\nabla^2\matder^{l-1}$) which can be controlled in the same fashion as before.  These, together with the fact $\|u\|_{H^2(\Omega_t)}^2\le \|\dive \matder^{l+1}v\|_{L^2(\Omega_t)}^2\le C\|R^l_{\Rdiv}\|_{L^2(\Omega_t)}^2$,  
it holds $|I^l_{111}(t)|\le C(\barE(t)+\|R^l_{\Rdiv}\|_{L^2(\Omega_t)}^2+\| R^l_{\Rbulk}\|_{L^2(\Omega_t)}^2+\|R^l_{\nabla H,H}\|_{L^2(\Omega_t)}^2)$ 
for $l\le 3$, and for $l\ge 4,|I^l_{111}(t)|\le C(E_l(t)+\|R^l_{\Rdiv}\|_{L^2(\Omega_t)}^2+\| R^l_{\Rbulk}\|_{L^2(\Omega_t)}^2+\|R^l_{\nabla H,H}\|_{L^2(\Omega_t)}^2)$.

We are left with $I^l_{112}(t)$. Applying Lemma \ref{l:Dt^lp} and  integration by parts, one has $\int_{\parOmega_t} \matder^{l+1} p \partial_\nu dS u =\int_{\parOmega_t} \tangrad(\matder^l v\cdot\nu)\cdot\tangrad\partial_\nu udS+\int_{\parOmega_t}R^l_p\partial_\nu udS$. 
Then, we use \eqref{e:elliptic} to deduce 
\begin{align*}
|I^l_{112}(t)|&\le C(\|\tangrad(\matder^l v\cdot\nu)\|_{L^2(\parOmega_t)}^2+\|\partial_\nu u\|_{H^1(\parOmega_t)}^2+\|R^l_p\|_{L^2(\parOmega_t)}^2)\\ 
&\le C(\barE(t)+\|R^l_{\Rdiv}\|_{H^{1/2}(\Omega_t)}^2+\|R^l_p\|_{L^2(\parOmega_t)}^2),\quad l\le 3.
\end{align*}
Similarly, $|I^l_{112}(t)|\le C(E_l(t)+\|R^l_{\Rdiv}\|_{H^{1/2}(\Omega_t)}^2+\|R^l_p\|_{L^2(\parOmega_t)}^2)$ for $l\ge 4$. This completes the proof. 
\end{proof}
\section{Estimates for the pressure}\label{s:p est}

In this section, we treat the pressure and will show that
\begin{equation}\label{e:p H^3}
\sup_{t\in[0,T]}\|p\|_{H^3(\Omega_t)}\le C,
\end{equation}
where the constant $C$ depends on the time $T>0$, the a priori assumptions $\mathcal{N}_T,\mathcal{M}_T$, and the initial data $\|v_0\|_{H^6(\Omega_0)},\|H_0\|_{H^6(\Omega_0)}$ and $\|\meancurv_{\parOmega_0}\|_{H^5(\parOmega_0)}$. For this purpose, we assume the a priori assumptions \eqref{e:a priori} for some $T>0$. As a result, it follows that $\sup_{0\le t< T}\|h\|_{H^{3+\delta}(\Gamma)}\le C$ and $\sup_{0\le t< T}\|B\|_{H^{1+\delta}(\parOmega_t)}  \le C$. In particular, we have $\|p\|_{H^{1+\delta}(\parOmega_t)}\le C$ and
\begin{equation}\label{e:p H^1 bou}
\int_0^T \|p\|_{H^1(\parOmega_t)}^2dt\le C\left( \mathcal{N}_T,\mathcal{M}_T\right) T.
\end{equation}
Recalling we define  $H^{1/2}(\parOmega_t)$ via the harmonic extension. From Lemma \ref{l:3.3 jul} and \eqref{e:har ext 2}, we obtain
\begin{align}
\|\partial_\nu p\|_{L^2(\parOmega_t)}^2&\le C( \|  \tangrad p\|_{L^2(\parOmega_t)}^2+ \|\nabla p\|_{L^2(\Omega_t)}^2+ \|\laplace p\|_{L^2(\Omega_t)}^2 )\nonumber\\
&\le C( \|  \tangrad p\|_{L^2(\parOmega_t)}^2+\|p\|_{H^{1/2}(\parOmega_t)}^2+\|\laplace p\|_{L^2(\Omega_t)}^2 )\nonumber\\
&\le  C( \|  p\|_{H^1(\parOmega_t)}^2+\|\laplace p\|_{L^2(\Omega_t)}^2 )\le C(\mathcal{N}_T,\mathcal{M}_T)(1+T)\label{e:par nu p}.
\end{align}

For higher-order derivatives, we have the following results.
\begin{proposition}\label{p:nab^3 bou}
Assume that $\parOmega_t$ is uniformly $H^{3+\delta}(\Gamma)$-regular for $\delta>0$ sufficiently small. For smooth function $f$, it holds
\begin{align}
\|\nabla^2 f\|_{L^2(\parOmega_t)}^2&\le C\left( \|\laplace f\|_{H^1(\Omega_t)}^2+\|f\|_{H^2(\parOmega_t)}^2\right) ,\label{e:nab^2f bou}\\
\|\nabla^3 f\|_{L^2(\parOmega_t)}^2&\le C\left( \|\laplace f\|_{H^2(\Omega_t)}^2+\|f\|_{H^3(\parOmega_t)}^2\right) .\label{e:nab^3f bou}
\end{align} 
\end{proposition}
\begin{proof}
For any $k\in\{ 1,2,3 \}$, it holds $\|\nabla\partial_k f\|_{L^2(\parOmega_t)}^2 \le C(\|\tangrad\partial_k f\|_{L^2(\parOmega_t)}^2  +\|\nabla^2 f\|_{L^2(\Omega_t)}^2+\|\nabla\laplace f\|_{L^2(\Omega_t)}^2)$ 
by applying Lemma \ref{l:3.3 jul}. Recall that we extend the unit outer normal $\nu$ to $\Omega_t$ by the harmonic extension
and   $\|\tinu\|_{H^{5/2+\delta}(\Omega_t)}\le C$. This, combined with Lemmas \ref{l:formu 1} and \ref{l:3.3 jul} implies that
\begin{align*}
\|\tangrad\partial_k f\|_{L^2(\parOmega_t)}^2\le{}& C(\|\nabla\tangrad f\|_{L^2(\parOmega_t)}^2+\|\nabla f\star\nabla\tilde{\nu}\star\tilde{\nu}\|_{L^2(\parOmega_t)}^2)\\ 
\le{}& C(\|\tangrad^2 f\|_{L^2(\parOmega_t)}^2+\|\nabla\laplace f\|_{L^2(\Omega_t)}^2+\|\nabla f\|_{H^1(\Omega_t)}^2\\
&\quad +\|\nabla f\star\nabla\tilde{\nu}\star\nabla\tilde{\nu}\|_{L^2(\Omega_t)}^2+\|\nabla^2 f\star\nabla\tilde{\nu}\|_{L^2(\Omega_t)}^2)\\
\le{}& C(\|\tangrad^2 f\|_{L^2(\parOmega_t)}^2+\|\nabla\laplace f\|_{L^2(\Omega_t)}^2+\|\nabla f\|_{H^1(\Omega_t)}^2),
\end{align*}
and $\|\nabla\partial_k f\|_{L^2(\parOmega_t)}^2\le  C(\|\tangrad^2 f\|_{L^2(\parOmega_t)}^2+\|\nabla\laplace f\|_{L^2(\Omega_t)}^2+\|\nabla f\|_{H^1(\Omega_t)}^2)$ as a consequence. 
Next, we apply \eqref{e:har ext 2} and Lemma  \ref{l:3.5 jul} to find that
\begin{align*}
\|\nabla f\|_{H^1(\Omega_t)}^2&\le C(\|\partial_\nu f\|_{H^{1/2}(\parOmega_t)}^2+\|\nabla f\|_{L^{2}(\Omega_t)}^2+\|\laplace f\|_{L^2(\Omega_t)}^2)\\
&\le C(\|\partial_\nu f\|_{H^{1/2}(\parOmega_t)}^2+\|f\|_{H^{1/2}(\parOmega_t)}^2+\|\laplace f\|_{L^2(\Omega_t)}^2).
\end{align*}
To control $\|\partial_\nu f\|_{H^{1/2}(\parOmega_t)}^2$, using Lemma \ref{l:3.3 jul} and by interpolation, one has 
\begin{align*}
\|\partial_\nu f\|_{H^{1/2}(\parOmega_t)}^2 
\le{}& \vare( \|\nabla^2 f\|_{L^{2}(\parOmega_t)}^2+ \|\nabla f\|_{H^{1}(\Omega_t)}^2)+C_\vare (\|\tangrad f\|_{L^{2}(\parOmega_t)}^2+\| f\|_{H^{1/2}(\parOmega_t)}^2+\|\laplace f\|_{L^{2}(\Omega_t)}^2),
\end{align*}
where  $\vare>0$ is sufficiently small.
We conclude that 
\begin{equation}\label{e:nab f H1}
\|\nabla f\|_{H^{1}(\Omega_t)}^2\le \vare \|\nabla^2 f\|_{L^{2}(\parOmega_t)}^2+  C(\|f\|_{H^1(\parOmega_t)}^2+\|\laplace f\|_{L^2(\Omega_t)}^2),
\end{equation}
and then \eqref{e:nab^2f bou} follows. 

To prove the second claim, by Lemma \ref{l:3.3 jul}, it follows that $\|\nabla\partial_k\partial_l f\|_{L^2(\parOmega_t)}^2 \le C(\|\tangrad\partial_k\partial_l f\|_{L^2(\parOmega_t)}^2+\|\nabla^3 f\|_{L^2(\Omega_t)}^2+\|\nabla^2\laplace f\|_{L^2(\Omega_t)}^2),k\in\{ 1,2,3 \}$. 
To estimate $\|\nabla^3 f\|_{L^2(\Omega_t)}^2$, from Lemma \ref{l:3.5 jul}, we obtain $\|\partial_i f\|_{H^2(\Omega_t)}^2\le C(\|\partial_\nu\partial_i f\|_{H^{1/2}(\parOmega_t)}^2+\|\nabla f\|_{L^2(\Omega_t)}^2+\|\nabla\laplace f\|_{L^2(\Omega_t)}^2)$ 
for $i\in\{ 1,2,3 \}$.  Then, we obtain $\|\partial_\nu\partial_i f\|_{H^{1/2}(\parOmega_t)}^2\le\vare\|\tangrad\partial_\nu\partial_i f\|_{L^{2}(\parOmega_t)}^2+C_\vare \|\partial_\nu\partial_i f\|_{L^{2}(\parOmega_t)}^2$ by interpolation,  
where $\vare>0$ is small enough. These, combined with \eqref{e:nab^2f bou}, \eqref{e:har ext 2} and the fact that $\|\tilde{\nu}\|_{H^{5/2+\delta}(\Omega_t)}\le C$, yield
\begin{align}
\|\nabla f\|_{H^2(\Omega_t)}^2\le{}&\vare(\|\nabla^3 f\|_{L^{2}(\parOmega_t)}^2+\|\nabla^2 f\star\nabla\tilde{\nu}\|_{L^{2}(\parOmega_t)}^2)+\|f\|_{H^{2}(\parOmega_t)}^2+\|\laplace f\|_{H^1(\Omega_t)}^2\nonumber\\
\le{}&\vare\|\nabla^3 f\|_{L^{2}(\parOmega_t)}^2+\|f\|_{H^{2}(\parOmega_t)}^2+\|\laplace f\|_{H^1(\Omega_t)}^2.\label{e:nab f H2}
\end{align}
Then, we control $\|\tangrad\partial_k\partial_l f\|_{L^2(\parOmega_t)}^2$ by
Lemma \ref{l:3.3 jul} and the fact that $\laplace \tilde{\nu}=0$  
\begin{align*}
\|\tangrad\partial_k\partial_l f\|_{L^2(\parOmega_t)}^2 
\le{}& C(\|\tangrad^2\partial_l f\|_{L^2(\parOmega_t)}^2+\|\nabla\tangrad\nabla f\|_{L^2(\Omega_t)}^2+\|\laplace\tangrad\nabla f\|_{L^2(\Omega_t)}^2+\|\nabla^2f\|_{L^2(\parOmega_t)}^2)\\
\le{}& C(\|\tangrad^2\partial_l f\|_{L^2(\parOmega_t)}^2+\|\nabla^2 f\|_{H^1(\Omega_t)}^2+\|\laplace f\|_{H^2(\Omega_t)}^2+\|f\|_{H^2(\parOmega_t)}^2).
\end{align*}
Again by \eqref{e:nab^2f bou} and Lemma \ref{l:3.3 jul}, we obtain 
\begin{align*}
&\|\tangrad^2\partial_l f\|_{L^2(\parOmega_t)}^2\\ 
\le{}& \|\tangrad^3 f\|_{L^2(\parOmega_t)}^2+\|\nabla^3 f\|_{L^2(\Omega_t)}^2+\|\nabla^3 f\star\nabla\tilde{\nu}\|_{L^2(\Omega_t)}^2+\|\nabla^2 f\star\nabla\tilde{\nu}\|_{L^2(\Omega_t)}^2+\|\nabla^2\laplace f\|_{L^2(\Omega_t)}^2\\
&+\|\nabla f\star\nabla^2\tilde{\nu}\|_{L^2(\Omega_t)}^2+\|\nabla^2 f\|_{L^2(\parOmega_t)}^2 +\|\nabla^2 f\star\nabla^2\tilde{\nu}\|_{L^2(\Omega_t)}^2+\|\nabla f\|_{H^{3/2+\delta}(\Omega_t)}^2\\
\le{}& C (\| f\|_{H^3(\parOmega_t)}^2+\|\nabla f\|_{H^2(\Omega_t)}^2+\|\laplace f\|_{H^2(\Omega_t)}^2).
\end{align*}
Recalling \eqref{e:nab f H2}, we conclude that $\|\nabla^3 f\|_{L^{2}(\parOmega_t)}^2\le \vare \|\nabla^3 f\|_{L^{2}(\parOmega_t)}^2+  C(\|f\|_{H^3(\parOmega_t)}^2+\|\laplace f\|_{H^2(\Omega_t)}^2)$,
and this completes the proof.
\end{proof}	

We will proceed with the estimates for the pressure.
\begin{lemma}\label{l:p est 1}
Assume that \eqref{e:a priori} holds for some $T>0$. Then, we have
\begin{equation*}
\sup_{t\in[0,T]}\| \nabla p\|_{L^2(\Omega_t)}^2\le e^{C(\mathcal{N}_T,\mathcal{M}_T)(1+T)}\left( 1+\|\nabla  p\|_{L^2(\Omega_0)}^2\right) .
\end{equation*}
\end{lemma}
\begin{proof}
From Lemma \ref{l:formu 1}, Reynolds transport theorem, and the divergence-free condition, one has
\begin{equation*}
\frac{d}{dt}\frac{1}{2}\int_{\Omega_t} |\nabla p|^2dx=\int_{\Omega_t} \nabla \matder p\cdot\nabla pdx+\int_{\Omega_t}\nabla v\star \nabla p\star\nabla pdx\eqqcolon I_1(t)+I_2(t).
\end{equation*} 
Clearly, \eqref{e:a priori} implies $|I_2(t)|\le C \|\nabla p\|_{L^2(\Omega_t)}^2$. For $|I_1(t)|$, by \eqref{e:Dtp}, \eqref{e:par nu p},  and  the divergence theorem, we have $|I_1(t)| \le \int_{\parOmega_t}\matder p\partial_\nu pdS-\int_{\Omega_t}\matder p\laplace pdx \le C(1+\| p\|_{H^1(\parOmega_t)}^2)-\int_{\Omega_t}\matder p\laplace pdx$. 
To control $\int_{\Omega_t}\matder p\laplace p dx$, we consider the following elliptic equation
\begin{equation*}
\begin{cases}
-\laplace u=\laplace p,&\text{ in }\Omega_t,\\
u=0,&\text{ on }\parOmega_t.
\end{cases}
\end{equation*}
Then, we see that $-\int_{\Omega_t}\matder p\laplace pdx 
=\int_{\Omega_t}\laplace\matder p udx+\int_{\parOmega_t}\matder p \partial_\nu udS\eqqcolon I_{11}(t)+I_{12}(t)$. 
Note that \eqref{e:laplace p} implies $|\laplace p|\le C$, and we have $\|u\|_{H^1(\Omega_t)}\le C$. Also, we get $\|\nabla u\|_{L^2(\parOmega_t)}^2\le C$ and $|I_{12}(t)|\le \|\matder p\|_{L^2(\parOmega_t)}^2+	\|\partial_\nu u\|_{L^2(\parOmega_t)}^2  \le C(1+\| p\|_{H^1(\parOmega_t)}^2)$ from Lemma \ref{l:3.3 jul}. 
We are left with  $I_{11}(t)$, for which one can repeat the argument in \cite[Propsition 6.3]{Julin2024} to deduce $\| u\|_{H^2(\Omega_t)}^2\le C(1+\| p\|_{H^1(\parOmega_t)}^2)$. Then, by \eqref{e:mhd}, \eqref{e:a priori}, Lemma \ref{l:lapDtp},   \eqref{e:def error 2} and \eqref{e:par nu p}, we integrate by parts to obtain $I_{11}(t) \le C(1+\|p\|_{H^1(\parOmega_t)}^2+\|\nabla p\|_{L^2(\Omega_t)}^2)$. 
Combining the above calculations, it follows that $I_1(t)+I_2(t)\le C(1+\|p\|_{H^1(\parOmega_t)}^2+\|\nabla p\|_{L^2(\Omega_t)}^2)$. 
With the help of estimate \eqref{e:p H^1 bou}, the proof is complete. 
\end{proof}
\begin{lemma}\label{l:p est 2}
Assume that \eqref{e:a priori} holds for some $T>0$. Then, we have
\begin{equation*}
\int_0^T \|\tangrad ^2p\|_{L^2(\parOmega_t)}^2dt\le C(\mathcal{N}_T,\mathcal{M}_T)(1+T).
\end{equation*}
\end{lemma}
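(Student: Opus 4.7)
The plan is to introduce the auxiliary boundary functional flagged in the outline,
\begin{equation*}
\mathcal{F}(t) \coloneqq \int_{\parOmega_t}\tangrad p\cdot\tangrad(\nabla v\,\nu\cdot\nu)\,dS,
\end{equation*}
verify that $|\mathcal{F}(t)|\le C(\mathcal{N}_T,\mathcal{M}_T)$ uniformly on $[0,T]$, and then establish the differential inequality
\begin{equation*}
\frac{d}{dt}\mathcal{F}(t)\le -\tfrac{1}{2}\|\tangrad^2 p\|_{L^2(\parOmega_t)}^2+C\bigl(1+\|p\|_{H^1(\parOmega_t)}^2+\|\nabla p\|_{L^2(\Omega_t)}^2\bigr).
\end{equation*}
Integrating over $[0,T]$ and using \eqref{e:p H^1 bou} together with Lemma \ref{l:p est 1} to bound the right-hand side will yield the desired $L^2_t L^2_x$ control on $\tangrad^2 p$.

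The first step, the uniform bound on $\mathcal{F}(t)$, is routine: the a priori assumption $\|v\|_{H^4(\Omega_t)}\le\mathcal{N}_T$ together with $\|\tinu\|_{H^{5/2+\delta}(\Omega_t)}\le C$ and the trace theorem gives $\|\tangrad(\nabla v\,\nu\cdot\nu)\|_{L^2(\parOmega_t)}\le C$, while the boundary identity $p=\meancurv$ and $\|B\|_{H^{1+\delta}(\parOmega_t)}\le C$ give $\|\tangrad p\|_{L^2(\parOmega_t)}\le C$. Cauchy--Schwarz closes this step.

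For the differential inequality, I would differentiate $\mathcal{F}(t)$ using the surface Reynolds transport formula (Lemma \ref{l:reynold 2}) combined with the commutator $[\matder,\tangrad]f=-(\tangrad v)^\top\tangrad f$ from Lemma \ref{l:formu 1}. The only potentially dangerous contribution is $\int_{\parOmega_t}\tangrad(\matder p)\cdot\tangrad(\nabla v\,\nu\cdot\nu)\,dS$, into which I substitute \eqref{e:Dtp}, namely $\matder p=-\surflaplace v_n-|B|^2v_n+\tangrad p\cdot v$. Using the divergence-free relation $\nabla v\,\nu\cdot\nu=-\tandive v_\sigma-\meancurv v_n$ on $\parOmega_t$, integrating by parts tangentially twice, and invoking $p=\meancurv$ so that $\surflaplace p=\surflaplace\meancurv$, the principal piece reassembles as $-\|\surflaplace p\|_{L^2(\parOmega_t)}^2$. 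All remaining contributions carry factors $\tangrad v$, $B$, $\matder\nu$, or $\matder B$ that are $L^\infty$-bounded by the a priori assumptions, and can be absorbed by Cauchy--Schwarz against a small multiple of $\|\surflaplace p\|_{L^2(\parOmega_t)}^2$ with lower-order remainders of the form $\|p\|_{H^1(\parOmega_t)}^2+\|\nabla p\|_{L^2(\Omega_t)}^2+1$. The Simon-type identity
\begin{equation*}
\|\tangrad^2 p\|_{L^2(\parOmega_t)}^2\le\|\surflaplace p\|_{L^2(\parOmega_t)}^2+C\int_{\parOmega_t}|B|^2|\tangrad p|^2\,dS,
\end{equation*}
already flagged in the introduction, then converts the coercive $-\|\surflaplace p\|^2$ into the required $-\tfrac{1}{2}\|\tangrad^2 p\|^2$, since $|B|$ and $|\tangrad p|$ are uniformly bounded.

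The main obstacle is the bookkeeping of the error terms generated by the tangential integrations by parts and by $[\matder,\tangrad]$: each one produces factors of $\nabla v$ or $B$ on $\parOmega_t$, and one must verify that every non-tangential component such as $\partial_\nu v$ either cancels through the structural identities above or can be re-expressed in purely tangential form that is controlled by $\mathcal{N}_T$. Crucially, $\|\nabla p\|_{L^2(\Omega_t)}$ is \emph{not} part of $\mathcal{N}_T$, so any occurrence of $\nabla p$ in the error must be handled using Lemma \ref{l:p est 1}, which is exactly the reason that estimate was established first.
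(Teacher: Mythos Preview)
Your setup is right---the same auxiliary functional $\mathcal{F}(t)$ is exactly what the paper uses, the uniform bound $|\mathcal{F}(t)|\le C$ is fine, and the Simon-type inequality at the end is used the same way. But the core of your argument has the two halves of $\frac{d}{dt}\mathcal{F}$ swapped, and this is a genuine gap.

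When you differentiate $\mathcal{F}$, the product rule gives \emph{two} main terms:
\[
\text{(a)}\ \int_{\parOmega_t}\tangrad(\matder p)\cdot\tangrad(\nabla v\,\nu\cdot\nu)\,dS
\qquad\text{and}\qquad
\text{(b)}\ \int_{\parOmega_t}\tangrad p\cdot\tangrad\matder(\nabla v\,\nu\cdot\nu)\,dS.
\]
You declare (a) to be the ``only potentially dangerous contribution'' and try to extract $-\|\surflaplace p\|_{L^2}^2$ from it via $\matder p=-\surflaplace v_n-|B|^2v_n+\tangrad p\cdot v$ and the identity $\nabla v\,\nu\cdot\nu=-\tandive v_\sigma-\meancurv v_n$. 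This does not work: after your substitution the leading pairing is $\int\tangrad\surflaplace v_n\cdot\tangrad(\meancurv v_n)$, which after any number of tangential integrations by parts produces integrands like $\surflaplace v_n\cdot v_n\,\surflaplace\meancurv$ or $\meancurv|\surflaplace v_n|^2$---never $|\surflaplace\meancurv|^2=|\surflaplace p|^2$. In fact (a) is harmless in a different sense: since $\|\tangrad(\nabla v\,\nu\cdot\nu)\|_{L^2(\parOmega_t)}\le C$ by the a priori assumptions, Cauchy--Schwarz bounds (a) by $\vare\|\tangrad\matder p\|_{L^2(\parOmega_t)}^2+C_\vare$, and the only uncontrolled piece of $\tangrad\matder p$ is $\tangrad(\tangrad p\cdot v)\sim\|\tangrad^2 p\|$, which gets absorbed after multiplication by $\vare$. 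This is exactly how the paper disposes of (a).

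The coercivity actually lives in term (b), which you have implicitly dismissed. There $\matder(\nabla v\,\nu\cdot\nu)$ has leading part $\nabla\matder v\,\nu\cdot\nu=-\nabla^2 p\,\nu\cdot\nu+\nabla(H\cdot\nabla H)\,\nu\cdot\nu$; the pressure Hessian is \emph{not} bounded by $\mathcal{N}_T$, so (b) cannot be written off as lower order. After one tangential integration by parts, (b) becomes $-\int_{\parOmega_t}\surflaplace p\,\matder(\nabla v\,\nu\cdot\nu)\,dS$, and the Beltrami identity \eqref{e:lap_B}, i.e.\ $\nabla^2 p\,\nu\cdot\nu=\laplace p-\surflaplace p-\meancurv\,\partial_\nu p$, turns the principal piece $\int_{\parOmega_t}\surflaplace p\,(\nabla^2 p\,\nu\cdot\nu)\,dS$ into $-\|\surflaplace p\|_{L^2(\parOmega_t)}^2$ plus terms controlled by $|\laplace p|\le C$ and \eqref{e:par nu p}. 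That is where the negative square comes from. Your outline never invokes the equation $\matder v=-\nabla p+H\cdot\nabla H$ nor the identity \eqref{e:lap_B}, which are the two ingredients that actually produce the coercive term.
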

\begin{proof}
We define $I(t)\coloneqq \int_{\parOmega_t}\tangrad p\cdot\tangrad(\nabla v\nu\cdot\nu)dS$, 
and from the hypothesis \eqref{e:a priori} and \eqref{e:p H^1 bou}, we see that $|I(t)|\le C \|\tangrad p\|_{L^2(\parOmega_t)}^2+C \|\nabla^2 v\|_{L^2(\parOmega_t)}^2+C \|\nabla v\star B\|_{L^2(\parOmega_t)}^2\le C$. 
Again by \eqref{e:a priori}, the divergence theorem, Lemma \ref{l:formu 1} and \eqref{e:reynold 2}, we deduce for sufficiently small $\vare>0$ that
\begin{align*}
\frac{d}{dt}I(t) 
\le{}&C|I(t)|+\int_{\parOmega_t}\matder\tangrad p\cdot\tangrad(\nabla v\nu\cdot\nu)+\tangrad p\cdot\matder\tangrad(\nabla v\nu\cdot\nu)dS\\
\le{}& C_\vare +\vare\|\tangrad\matder p\|_{L^2(\parOmega_t)}^2+\int_{\parOmega_t}\tangrad p\cdot\tangrad\matder(\nabla v\nu\cdot\nu)dS\\
\le{}& C_\vare +\vare\|\tangrad\matder p\|_{L^2(\parOmega_t)}^2-\int_{\parOmega_t}\surflaplace p\matder(\nabla v\nu\cdot\nu)dS\eqqcolon C_\vare+\vare I_1(t)+I_2(t).
\end{align*}
By \eqref{e:a priori}, \eqref{e:Dtp} and \eqref{e:p H^1 bou}, it holds $|I_{1}(t)|\le C(1+\|v_n\|_{H^3(\parOmega_t)}^2+ \|\tangrad B\star B\star v_n\|_{L^2(\parOmega_t)}^2+\|\tangrad p\|_{H^1(\parOmega_t)}^2)\le C(1+\|\tangrad^2 p\|_{L^2(\parOmega_t)}^2)$. 
For $|I_2(t)|$,  from \eqref{e:a priori}, Lemma \eqref{l:formu 1} and the divergence theorem, we have
\begin{align*}
|I_{2}(t)|\le{}&- \int_{\parOmega_t}\surflaplace p(\nabla\matder v\nu\cdot\nu)dS+C\|\tangrad p\|_{L^1(\parOmega_t)}\\
={}&-\int_{\parOmega_t}\surflaplace p(\nabla(-\nabla p+H\cdot \nabla H)\nu\cdot\nu)dS+\vare\|\tangrad^2 p\|_{L^2(\parOmega_t)}^2 +C_\vare\\
\le{}&\int_{\parOmega_t}\surflaplace p(\nabla^2p\nu\cdot\nu)dS-\int_{\parOmega_t}\surflaplace p\star\nabla^2 H\star \nabla H\star \nu\star \nu dS+\vare\|\tangrad^2p\|_{L^2(\parOmega_t)}^2+C_\vare\\
\le{}&\int_{\parOmega_t}\surflaplace p(\nabla^2p\nu\cdot\nu)dS+\vare\|\tangrad^2 p\|_{L^2(\parOmega_t)}^2+C_\vare.
\end{align*}
Recalling $|\laplace p|\le C$ and by \eqref{e:lap_B}, \eqref{e:par nu p}, the divergence theorem, for $\vare>0$ small enough, we deduce
\begin{align*}
\int_{\parOmega_t}\surflaplace p(\nabla^2 p\nu\cdot\nu)dS
={}&\int_{\parOmega_t}\surflaplace p\laplace  p-\surflaplace p\surflaplace   p-\surflaplace p\meancurv \partial_\nu  pdS\\
\le{}& C+\frac{\vare}{2} \|\tangrad^2 p\|_{L^2(\parOmega_t)}^2+C_\vare\| p\|_{H^1(\parOmega_t)}^2-\int_{\parOmega_t}|\tangrad^2 p|^2dS\\
&+\|\surflaplace p\|_{L^2(\parOmega_t)}\|\partial_\nu  p\|_{L^2(\parOmega_t)}\|p\|_{L^\infty(\parOmega_t)}\le -\frac 34\|\tangrad^2 p\|_{L^2(\parOmega_t)}^2+C_\vare.
\end{align*}
Above, we have applied \cite[Remark 2.4]{Fusco2020} that $\|\tangrad^2p\|_{L^2(\parOmega_t)}^2\le \|\surflaplace p\|_{L^2(\parOmega_t)}^2+C\int_{\parOmega_t}|B|^2|\tangrad p|^2dS$.
Combining the above calculations, the proof is complete since $\frac{d}{dt}I(t)\le-\frac{1}{2}\|\tangrad^2 p\|_{L^2(\parOmega_t)}^2+C$.  
\end{proof}

\begin{lemma}\label{l:p est 3}
Assume that \eqref{e:a priori} holds for some $T>0$. Then, we have
\begin{equation*}
\sup_{t\in[0,T]}\| \nabla^2 p\|_{L^2(\Omega_t)}^2\le e^{C(\mathcal{N}_T,\mathcal{M}_T)(1+T)}(1+\|\nabla^2  p\|_{L^2(\Omega_0)}^2).
\end{equation*}
\end{lemma}
\begin{proof}
We differentiate and apply Lemma \ref{l:formu 3} to obtain
\begin{equation*}
\frac{d}{dt}\frac{1}{2}\int_{\Omega_t} |\nabla^2 p|^2dx=\int_{\Omega_t} \nabla^2 \matder p:\nabla^2 pdx+\int_{\Omega_t}\nabla^2 v\star \nabla p\star\nabla^2 p+\nabla v\star \nabla^2 p\star\nabla^2 pdx
\eqqcolon I_1(t)+I_2(t).
\end{equation*} 
From \eqref{e:a priori}, \eqref{e:laplace p} and using Lemma \ref{l:p est 1}, we have
\begin{align*}
I_1(t) 
&\le\int_{\Omega_t}\sum_{i,j}\partial_i(\partial_j\matder p\partial_i\partial_j p)dx-\int_{\Omega_t}\nabla\matder p\cdot\nabla\laplace pdx\\
&\le \int_{\parOmega_t}\sum_{j}\partial_j\matder p\partial_\nu\partial_j pdS+\int_{\Omega_t}\laplace\matder p\laplace pdx-\int_{\parOmega_t}\partial_\nu \matder p\laplace pdS\\
&\le C\sum_{j}\|\partial_\nu\partial_j p\|_{L^2(\parOmega_t)}^2+C\|\partial_\nu\matder p\|_{L^2(\parOmega_t)}^2+C\|\laplace\matder  p\|_{L^2(\Omega_t)}^2\eqqcolon I_{11}(t)+I_{12}(t)+I_{13}(t),\\
I_2(t)&\le C(\|v\|_{H^{7/2}(\Omega_t)}^2\|\nabla p\|_{L^6(\Omega_t)}^2+\|\nabla^2 p\|_{L^2(\Omega_t)}^2)\le C(1+\|\nabla^2 p\|_{L^2(\Omega_t)}^2).
\end{align*} 
We apply Lemmas \ref{l:lapDtp} and \ref{l:p est 1}, and \eqref{e:nab^2f bou} to obtain $|I_{13}(t)|\le C(1+\|\nabla^2 p\|_{L^2(\Omega_t)}^2)$ and $|I_{11}(t)|\le C(1+\|p\|_{H^2(\parOmega_t)}^2)$.
Finally, \eqref{e:a priori}, Lemmas \ref{l:lapDtp} and \ref{l:3.3 jul}, and  \eqref{e:har ext 2} imply that
\begin{align*}
|I_{12}(t)|&\le C( \| \tangrad\matder p\|_{L^2(\parOmega_t)}^2+ \|\nabla\matder p\|_{L^2(\Omega_t)}^2+ \|\laplace\matder p\|_{L^2(\Omega_t)}^2 )\\
&\le C( \| \tangrad\matder p\|_{L^2(\parOmega_t)}^2+ \|\matder p\|_{H^{1/2}(\parOmega_t)}^2+ \|\laplace\matder p\|_{L^2(\Omega_t)}^2 )\\
&\le C(1+\|p\|_{H^2(\parOmega_t)}^2+\|\nabla^2 p\|_{L^2(\Omega_t)}^2).
\end{align*}
Combined with \eqref{e:p H^1 bou} and Lemma \ref{l:p est 2}, the proof is complete. 
\end{proof}	

\begin{lemma}\label{l:p est 4}
Assume that \eqref{e:a priori} holds for some $T>0$. Then, we have
\begin{equation*}
\sup_{t\in[0,T]}\|\tangrad^2 p\|_{L^2(\parOmega_t)}^2+\int_0^T \|\tangrad ^3p\|_{L^2(\parOmega_t)}^2dt\le C\left( T,\mathcal{N}_T,\mathcal{M}_T,\|\tangrad^2p\|_{L^2(\parOmega_0)},\|\nabla p\|_{H^1(\Omega_0)}\right).
\end{equation*} 
\end{lemma}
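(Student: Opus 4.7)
The plan is to mirror the proof of Lemma \ref{l:p est 2} one order higher, employing the auxiliary functional
\[
I(t)\coloneqq \int_{\parOmega_t}\tangrad^2 p:\tangrad^2(\nabla v\,\nu\cdot\nu)\,dS+\vare\int_{\parOmega_t}|\tangrad^2 p|^2\,dS,
\]
with $\vare>0$ to be fixed sufficiently small. By \eqref{e:a prior}, the trace theorem, and Young's inequality one has the two-sided control $\tfrac{\vare}{2}\|\tangrad^2p\|_{L^2(\parOmega_t)}^2-C\le I(t)\le C(1+\|\tangrad^2p\|_{L^2(\parOmega_t)}^2)$; the initial value $I(0)$ is bounded by $\|\tangrad^2p\|_{L^2(\parOmega_0)}$ together with $\|\nabla p\|_{H^1(\Omega_0)}$ after invoking the trace theorem to control $\tangrad^2(\nabla v\,\nu\cdot\nu)$ via $\nabla^3 v$ and replacing $\matder v$ with $H\cdot\nabla H-\nabla p$ from \eqref{e:mhd}.

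Next, I would compute $\frac{d}{dt}I(t)$ by combining the surface Reynolds transport formula (Lemma \ref{l:reynold 2}) with the commutators in Lemma \ref{l:formu 1}, especially $[\matder,\tangrad^2]f=\tangrad^2v\star\tangrad f+\tangrad v\star\tangrad^2f$ and $\matder\nu=-\tangrad v_n+Bv_\sigma$. After two tangential integrations by parts on the closed surface, the leading piece reorganizes as
\[
-\int_{\parOmega_t}\surflaplace\matder p\,\surflaplace(\nabla v\,\nu\cdot\nu)\,dS+2\vare\int_{\parOmega_t}\surflaplace p\,\surflaplace\matder p\,dS+R(t),
\]
where $R(t)$ collects commutator, measure-evolution, and $\matder B,\matder\nu$ errors, each bounded by $C(1+\|\tangrad^2p\|_{L^2(\parOmega_t)}^2)+\vare'\|\tangrad^3p\|_{L^2(\parOmega_t)}^2$ via \eqref{e:a prior}, Lemmas \ref{l:p est 1}--\ref{l:p est 3}, \eqref{e:nab^2p bou}, and Young's inequality.

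The crux is extracting the negative $\|\tangrad^3p\|^2$. Substituting $\matder p=-\surflaplace v_n-|B|^2v_n+\tangrad p\cdot v$ from \eqref{e:Dtp}, the first leading term becomes $\int_{\parOmega_t}\surflaplace(\surflaplace v_n)\,\surflaplace(\nabla v\,\nu\cdot\nu)\,dS$ plus admissible remainders; in parallel, decomposing via $\surflaplace p=\laplace p-\nabla^2p\,\nu\cdot\nu-\meancurv\partial_\nu p$ from \eqref{e:lap_B} together with the bound $|\laplace p|\le C$ from \eqref{e:laplace p} recasts the $\vare$-term as $-\vare\|\tangrad\surflaplace p\|^2_{L^2(\parOmega_t)}$ up to terms controlled by the a priori bounds. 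Invoking Simon's identity
\[
\|\tangrad^3 p\|_{L^2(\parOmega_t)}^2\le\|\tangrad\surflaplace p\|_{L^2(\parOmega_t)}^2+C\|p\|_{H^2(\parOmega_t)}^2
\]
from \cite{Fusco2020} and fixing $\vare$ small enough to absorb every stray $\vare'\|\tangrad^3p\|^2$, one reaches
\[
\frac{d}{dt}I(t)\le -\frac{1}{4}\|\tangrad^3 p\|_{L^2(\parOmega_t)}^2+C\bigl(1+\|\tangrad^2 p\|_{L^2(\parOmega_t)}^2\bigr).
\]
Using $\|\tangrad^2p\|^2\le\tfrac{2}{\vare}(I(t)+C)$, Grönwall yields $\sup_{[0,T]}I(t)\le C$, and integrating the differential inequality against $\int_0^T\|\tangrad^2p\|^2\,dt\le C$ from Lemma \ref{l:p est 2} delivers the advertised bound.

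The main obstacle is accounting: the time derivative of $I(t)$ generates many three-derivative boundary contributions coming from both $\matder\tangrad^2p$ and $\matder\tangrad^2(\nabla v\,\nu\cdot\nu)$, and the negative sign on the principal part emerges only after the right pairing, tangential integration by parts, and the twin substitutions \eqref{e:Dtp} and \eqref{e:lap_B}. Ensuring that every $\vare'$ in Young's inequality can be absorbed on the left---particularly for the $|B|^2|\tangrad p|^2$ cross terms introduced by Simon's identity---requires the curvature control $\|B\|_{H^{1+\delta}}\le C$ inherent in \eqref{e:a prior} together with \eqref{e:nab^2p bou}--\eqref{e:nab^3p bou}.
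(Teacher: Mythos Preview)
Your auxiliary functional $I(t)$ and the overall Gr\"onwall architecture match the paper exactly, but you have misidentified where the coercive $-\|\tangrad^3 p\|_{L^2(\parOmega_t)}^2$ arises, and this is a genuine gap.

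When you differentiate $I_1(t)=\int_{\parOmega_t}\tangrad^2 p:\tangrad^2(\nabla v\,\nu\cdot\nu)\,dS$, the material derivative produces \emph{two} principal terms: $\int\tangrad^2\matder p:\tangrad^2(\nabla v\,\nu\cdot\nu)$ and $\int\tangrad^2 p:\tangrad^2\matder(\nabla v\,\nu\cdot\nu)$. Your display keeps only a version of the first and implicitly sweeps the second into $R(t)$. But the second term is \emph{not} an admissible remainder: after one integration by parts it becomes $-\int\tangrad\surflaplace p\cdot\tangrad\matder(\nabla v\,\nu\cdot\nu)$, and $\matder(\nabla v\,\nu\cdot\nu)$ contains $\nabla\matder v\,\nu\cdot\nu=-\nabla^2 p\,\nu\cdot\nu+\nabla(H\cdot\nabla H)\,\nu\cdot\nu$ via \eqref{e:mhd}. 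Substituting $\nabla^2 p\,\nu\cdot\nu=\laplace p-\surflaplace p-\meancurv\partial_\nu p$ from \eqref{e:lap_B} then yields the term $-\int|\tangrad\surflaplace p|^2$, which after Simon's identity is $\le -\|\tangrad^3 p\|_{L^2(\parOmega_t)}^2+C\|p\|_{H^2(\parOmega_t)}^2$. \emph{This} is where the negative sign lives. The first principal term, the one you kept, is merely bounded: the paper controls it by $\vare\|\tangrad^2\matder p\|_{L^2(\parOmega_t)}^2+C_\vare$, and $\|\tangrad^2\matder p\|^2\le C(1+\|\tangrad^3 p\|^2)$ from \eqref{e:Dtp} is absorbed with a small Young parameter.

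Conversely, the $\vare$-piece cannot supply the negative term. One has $\frac{d}{dt}\,\vare\|\tangrad^2 p\|_{L^2(\parOmega_t)}^2\approx 2\vare\int\tangrad^2 p:\tangrad^2\matder p$, and $\matder p=-\surflaplace v_n-|B|^2v_n+\tangrad p\cdot v$ from \eqref{e:Dtp} carries derivatives of $v_n$, not of $p$; decomposing $\surflaplace p$ via \eqref{e:lap_B} here does not manufacture $-\|\tangrad\surflaplace p\|^2$. In the paper this $\vare$-contribution is recorded simply as $\frac{d}{dt}I_2(t)\le C(1+\|\tangrad^2 p\|^2+\|\tangrad^3 p\|^2)$, with the wrong sign on $\|\tangrad^3 p\|^2$, which is precisely why $\vare$ must be small enough to be absorbed by the genuine negative term coming from $I_1$. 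So the roles of the two pieces are reversed in your sketch: $I_1$ supplies the coercivity, and $\vare I_2$ only furnishes the lower bound $I(t)\ge\tfrac{\vare}{2}\|\tangrad^2 p\|^2-C_\vare$ needed at the end.
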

\begin{proof}
We define 
\begin{equation*}
I(t)\coloneqq \int_{\parOmega_t}\tangrad^2 p: \tangrad^2(\nabla v\nu\cdot\nu)dS+\vare\int_{\parOmega_t}| \tangrad^2 p|^2dS\eqqcolon I_1(t)+\vare I_2(t),
\end{equation*}
where $\vare>0$ will be chosen later. From \eqref{e:a priori}, \eqref{e:p H^1 bou}, Lemmas  \ref{l:p est 2} and  \ref{l:2.12 jul}, we have
\begin{equation*}
|I_1(t)|\le C_\vare (\|\nabla^3 v\|_{L^2(\parOmega_t)}^2+\|\nabla^2 v\star B\|_{L^2(\parOmega_t)}^2 + \|\nabla v\star\tangrad B\|_{L^2(\parOmega_t)}^2)+\frac{\vare}{2} \|\tangrad^2 p\|_{L^2(\parOmega_t)}^2\le \frac{\vare}{2} \|\tangrad^2 p\|_{L^2(\parOmega_t)}^2+C_\vare,
\end{equation*} 
and therefore, $I(t)\ge -C_\vare+\dfrac \vare 2\|\tangrad^2 p(\cdot,t)\|_{L^2(\parOmega_t)}^2$.
We differentiate and use \eqref{e:a priori}, \eqref{e:p H^1 bou}, the divergence theorem, Lemmas \ref{l:formu 1} and \ref{l:2.12 jul} to obtain
\begin{align*}
\frac{d}{dt}I_1(t)
\le{}&C|I_1(t)|+\int_{\parOmega_t}\matder\tangrad^2 p:\tangrad^2(\nabla v\nu\cdot\nu)+\tangrad^2 p:\matder\tangrad^2(\nabla v\nu\cdot\nu)dS\\
\le{}& C_\vare +\vare(\|\tangrad^2 p\|_{L^2(\parOmega_t)}^2+\|\tangrad^2\matder p\|_{L^2(\parOmega_t)}^2)+\int_{\parOmega_t}\tangrad^2 p:\tangrad^2\matder(\nabla v\nu\cdot\nu)dS\\
\le{}& \vare\|\tangrad^2 p\|_{L^2(\parOmega_t)}^2+C_\vare+\vare\underbrace{\|\tangrad^2\matder p\|_{L^2(\parOmega_t)}^2}_{\eqqcolon I_{11}(t)}\underbrace{-\int_{\parOmega_t}\tangrad \surflaplace p\cdot\tangrad\matder(\nabla v\nu\cdot\nu)dS}_{\eqqcolon I_{12}(t)}.
\end{align*} 
The first term can be controlled by \eqref{e:a priori}, \eqref{e:Dtp}, \eqref{e:p H^1 bou}  and Lemma \ref{l:2.12 jul}, i.e., $|I_{11}(t)|\le C(1+\|\tangrad^2  p\|_{L^2(\parOmega_t)}^2+\|\tangrad^3 p\|_{L^2(\parOmega_t)}^2)$. 
As for $I_{12}(t)$, applying  \eqref{e:a priori}, Lemma \eqref{l:formu 1} and the divergence theorem, it follows that
\begin{align*}
|I_{12}(t)|
\le{}&- \int_{\parOmega_t}\tangrad\surflaplace p\cdot\tangrad(\nabla\matder v\nu\cdot\nu)dS+C(\|\tangrad^2 p\|_{L^2(\parOmega_t)}^2+1)\\
={}&-\int_{\parOmega_t}\tangrad\surflaplace p\cdot\tangrad(\nabla(-\nabla p+H\cdot \nabla H)\nu\cdot\nu)dS+C(\|\tangrad^2 p\|_{L^2(\parOmega_t)}^2+1)\\
\le{}&\int_{\parOmega_t}\tangrad\surflaplace p\cdot\tangrad(\nabla^2p\nu\cdot\nu)dS-\int_{\parOmega_t}\tangrad\surflaplace p\cdot\tangrad(\nabla^2H\star H\star\nu\star\nu)dS+C(\|\tangrad^2 p\|_{L^2(\parOmega_t)}^2+1)\\
\le{}&\int_{\parOmega_t}\tangrad\surflaplace p\cdot\tangrad(\nabla^2p\nu\cdot\nu)dS+\vare\|\tangrad^3 p\|_{L^2(\parOmega_t)}^2+C(\|\tangrad^2 p\|_{L^2(\parOmega_t)}^2+1).
\end{align*}
To estimate $\int_{\parOmega_t}\tangrad\surflaplace p\cdot\tangrad(\nabla^2p\nu\cdot\nu)dS$, by \eqref{e:a priori}, \eqref{e:lap_B}, \eqref{e:laplace p},  \eqref{e:par nu p}, Lemma \ref{l:3.5 jul} and the divergence theorem, it holds
\begin{align*}
\int_{\parOmega_t}\tangrad\surflaplace p\cdot\tangrad(\nabla^2 p\nu\cdot\nu)dS
={}&\int_{\parOmega_t}\tangrad\surflaplace p\cdot\tangrad\laplace  p-\tangrad\surflaplace p\cdot\tangrad\surflaplace   p-\tangrad\surflaplace p\cdot\tangrad(\meancurv \partial_\nu p)dS\\
\le{}& C_\vare\|\laplace p\|_{H^{3/2}(\Omega_t)}^2+\vare\|\tangrad^3 p\|_{L^2(\parOmega_t)}^2+C_\vare\|p\|_{H^2(\parOmega_t)}^2-\frac 78\int_{\parOmega_t}|\tangrad^3 p|^2dS\\
&+\|\tangrad^3 p\|_{L^2(\parOmega_t)}(\|\tangrad\partial_\nu  p\|_{L^2(\parOmega_t)}\|p\|_{L^\infty(\parOmega_t)}+\|\partial_\nu  p\|_{L^4(\parOmega_t)}\|\tangrad p\|_{L^4(\parOmega_t)})\\
\le{}& C_\vare-\frac 34\int_{\parOmega_t}|\tangrad^3 p|^2dS+C_\vare\|\nabla\partial_\nu  p\|_{L^2(\parOmega_t)}^2+\|\tangrad^3 p\|_{L^2(\parOmega_t)}\|\nabla p\|_{H^1(\Omega_t)}^2\\
\le{}& C_\vare-\frac 12\|\tangrad^3 p\|_{L^2(\parOmega_t)}^2+C \|\tangrad^2 p\|_{L^2(\parOmega_t)}^2. 
\end{align*}
Above, we have used Lemma  \ref{l:formu 1} and \eqref{e:nab^2f bou} to deduce $\|\nabla\partial_\nu p\|_{L^2(\parOmega_t)}^2 
\le C(1+\|p\|_{H^2(\parOmega_t)}^2+\|\laplace p\|_{H^1(\Omega_t)}^2)$, 
and the result \cite[Lemma 2.3]{Fusco2020}, i.e., $\|\tangrad^3 p\|_{L^2(\parOmega_t)}^2\le \|\tangrad\surflaplace p\|_{L^2(\parOmega_t)}^2+C\|p\|_{H^2(\parOmega_t)}^2$.
Similarly, we can obtain $\frac{d}{dt}I_2(t)\le C(1+\|\tangrad^2 p\|_{L^2(\parOmega_t)}^2+\|\tangrad^3 p\|_{L^2(\parOmega_t)}^2)$.

Combined the above calculations and by choosing suitable $\vare>0$, one has $\frac{d}{dt}I(t)\le -\frac{1}{4}\|\tangrad^3 p\|_{L^2(\parOmega_t)}^2+C \|\tangrad^2 p\|_{L^2(\parOmega_t)}^2+C$. 
Integrating the above over $[0,t]$ with $ 0<t\le T$ and recalling  \eqref{e:p H^1 bou} together with $I(t)\ge -C_\vare+\frac \vare 2\|\tangrad^2 p(\cdot,t)\|_{L^2(\parOmega_t)}^2$, the lemma follows.  
\end{proof} 

\begin{lemma}\label{l:p est 5}
Assume that \eqref{e:a priori} holds for some $T>0$. Then, we have 
\begin{equation*}
\sup_{t\in[0,T]}\|p\|_{H^3(\Omega_t)}^2\le C\left( \mathcal{N}_T,\mathcal{M}_T,\|\tangrad^2p\|_{L^2(\parOmega_0)},\|\nabla p\|_{H^2(\Omega_0)},T\right) .
\end{equation*}
\end{lemma} 
\begin{proof}
We differentiate and apply Lemma \ref{l:formu 3} to obtain
\begin{align*}
&\ \quad  \frac{d}{dt}\frac{1}{2}\int_{\Omega_t} |\nabla^3 p|^2dx\\  
&=\int_{\Omega_t} \sum_{ijk}\partial_{ijk}\matder p\partial_{ijk} pdx+\int_{\Omega_t}\nabla^3 v\star \nabla p\star\nabla^3 p+\nabla^2 v\star \nabla^2 p\star\nabla^3 p+\nabla v\star \nabla^3 p\star\nabla^3 pdx\\ 
&\eqqcolon   I_1(t)+I_2(t).
\end{align*}
From \eqref{e:a priori}, \eqref{e:laplace p} and Lemma \ref{l:lapDtp}, we have
\begin{align*}
|I_1(t)|\le{}&\int_{\Omega_t}\sum_{i,j,k}\partial_i(\partial_{jk}\matder p\partial_{ijk} p)dx-\int_{\Omega_t}\sum_{j,k}\partial_{jk}\matder p\partial_{jk}\laplace pdx\\
\le{}& \int_{\parOmega_t}\sum_{j,k}\partial_{jk}\matder p\partial_\nu\partial_{jk} pdS+\int_{\Omega_t}\sum_{k}\partial_{k}\laplace\matder p\partial_{k}\laplace pdx -\int_{\parOmega_t}\sum_{k}\partial_\nu\partial_{k}\matder p\partial_{k}\laplace pdS\\ 
\le{}& C \sum_{j,k}\|\partial_\nu\partial_{jk} p\|_{L^2(\parOmega_t)}^2+C\sum_{j,k}\|\partial_{jk}\matder p\|_{L^2(\parOmega_t)}^2  +C(1+\|\nabla p\|_{H^2(\Omega_t)}^2), 
\end{align*}
and $|I_2(t)|
\le C(1+\|\nabla p\|_{H^2(\Omega_t)}^2)$. Applying  \eqref{e:nab^2f bou} and \eqref{e:nab^3f bou}, we obtain
\begin{align*}
\|\partial_\nu\partial_{jk} p\|_{L^2(\parOmega_t)}^2+\|\partial_{jk}\matder p\|_{L^2(\parOmega_t)}^2 
&\le C(\|\laplace p\|_{H^2(\Omega_t)}^2+\| \nabla p\|_{H^2(\Omega_t)}^2+\|\matder p\|_{H^2(\parOmega_t)}^2+\|p\|_{H^3(\parOmega_t)}^2)\\
&\le C(1+\|\nabla p\|_{H^2(\Omega_t)}^2+\|p\|_{H^3(\parOmega_t)}^2),
\end{align*}
for any indices $j,k$.
The claim follows from Lemma \ref{l:3.5 jul} and the previous pressure estimates (\eqref{e:p H^1 bou}, Lemmas  \ref{l:p est 1},  \ref{l:p est 2}, \ref{l:p est 3} and \ref{l:p est 4}), since 
\begin{equation*}
\frac{d}{dt}\frac{1}{2}\int_{\Omega_t} |\nabla^3 p|^2 
\le C(1+\|\nabla p\|_{H^2(\Omega_t)}^2+\|  p\|_{H^3(\parOmega_t)}^2).
\end{equation*} 
\end{proof}	

We conclude this section by controlling the initial quantities $\barE(0)$ and $\sum_{k=0}^{3}\|\matder^{3-k}p\|_{H^{3k/2+1}(\Omega_0)}^2$. 
\begin{proposition}\label{p:initial cond}
Assume that $\Omega_0$ is a smooth and $\mathcal{M}_0\coloneqq \radi-\|h_0\|_{L^\infty(\parOmega)}>0$. Then, we have  
\begin{equation*}
\barE(0)+\sum_{k=0}^{3}\|\matder^{3-k}p\|_{H^{3k/2+1}(\Omega_0)}^2\le C\left(\mathcal{M}_0,\|v_0\|_{H^6(\Omega_0)},\|H_0\|_{H^6(\Omega_0)},\|\meancurv\|_{H^5(\parOmega_0)}\right) .
\end{equation*} 
The result remains valid when the initial time is replaced with any $t\in (0,T)$, provided $\|h(\cdot,t)\|_{L^\infty(\Gamma)}< \radi$.
\end{proposition}
\begin{proof}
We only need to consider the case when $t=0$ and we divide the proof into three steps.

\textbf{Step 1.} We control $\|\matder^{4-k}H\|_{H^{3k/2}(\Omega_0)}^2$ by the lower-order velocity terms using  \eqref{e:Dt^k H} and \eqref{e:nabDt^k H}. For $k=0$, from $\| H\|_{L^\infty(\Omega_0)}\le C\| H\|_{H^6(\Omega_0)}$, we apply \eqref{e:Dt^k H} to obtain
\begin{align*} 
\|\matder^4 H\|_{L^2(\Omega_0)}^2 
\le{}& C 
(\sum_{|\beta|\le 3}\|\nabla\matder^{\beta_1}v\|_{L^2(\Omega_0)}^2+\sum_{|\beta|\le 2}\|\nabla\matder^{\beta_1}v\|_{L^3(\Omega_0)}^2\| \nabla\matder^{\beta_{2}}v\|_{L^6(\Omega_0)}^2\\
&+\sum_{|\beta|\le 1}\|\nabla\matder^{\beta_1}v\|_{L^6(\Omega_0)}^2\| \nabla\matder^{\beta_{2}}v\|_{L^6(\Omega_0)}^2\| \nabla\matder^{\beta_{3}}v\|_{L^6(\Omega_0)}^2+\|v\|_{H^3(\Omega_0)}^8)\\
\le{}& C\|\matder^3v\|_{H^{1}(\Omega_0)}^2+C(1+\|\matder^2 v\|_{H^{2}(\Omega_0)}^2)(1+\|\matder v\|_{H^{2}(\Omega_0)}^2).
\end{align*}

We claim that 
\begin{align}
&\sum_{k=1}^3 \|\matder^{4-k}H\|_{H^{3k/2}(\Omega_0)}^2\nonumber\\
\le{}& C(\| v\|_{H^{4}(\Omega_0)}, \| H\|_{H^{4}(\Omega_0)})(1+\sum_{k=1}^3 \|\matder^{4-k}v\|_{H^{(3k-1)/2 }(\Omega_0)}^2+\| v\|_{H^{11/2}(\Omega_0)}^2+\| H\|_{H^{9/2}(\Omega_0)}^2).\label{e:claim}
\end{align} 
Indeed, by \eqref{e:Dt^k H}, it follows that
\begin{align*} 
\|\matder^3 H\|_{H^{3/2}(\Omega_0)}^2 
\le{}& C\sum_{|\beta|\le 2}\|\nabla\matder^{\beta_1}v\|_{H^{3/2}(\Omega_0)}^2\| H\|_{H^{2 }(\Omega_0)}^2+
C \sum_{|\beta|\le 1}\|\nabla\matder^{\beta_1}v\|_{H^{2 }(\Omega_0)}^2\\
&\quad \cdot\| \nabla\matder^{\beta_{m}}v\|_{H^{2}(\Omega_0)}^2\| H\|_{H^{2 }(\Omega_0)}^2+C\|v\|_{H^3(\Omega_0)}^6\| H\|_{H^{2}(\Omega_0)}^2\\
\le{}& C(\| v\|_{H^{4}(\Omega_0)}, \| H\|_{H^{4}(\Omega_0)})(1+\|\matder^2v\|_{H^{5/2}(\Omega_0)}^2+\|\matder v\|_{H^{5/2}(\Omega_0)}^2).
\end{align*}
Again from \eqref{e:Dt^k H} and Lemma \ref{l:2.10 jul}, we see that
\begin{align*}
\|\matder^2H\|_{H^{3}(\Omega_0)}^2 
&\le C\|\nabla\matder v\|_{H^{3}(\Omega_0)}^2\| H\|_{H^{3}(\Omega_0)}^2+C\|\nabla v\|_{H^{3}(\Omega_0)}^4 \| H\|_{H^{3}(\Omega_0)}^2\\
&\le C(\| v\|_{H^{4}(\Omega_0)}, \| H\|_{H^{4}(\Omega_0)})(1+\|\matder v\|_{H^{4}(\Omega_0)}^2),\\
\|\matder H\|_{H^{9/2}(\Omega_0)}^2&\le C(\|H\|_{L^{\infty}(\Omega_0)}^2\|v\|_{H^{11/2}(\Omega_0)}^2+\|H\|_{H^{9/2}(\Omega_0)}^2\|v\|_{L^{\infty}(\Omega_0)}^2)\\
&\le C(\| v\|_{H^{4}(\Omega_0)}, \| H\|_{H^{4}(\Omega_0)})(\|v\|_{H^{11/2}(\Omega_0)}^2+\|H\|_{H^{9/2}(\Omega_0)}^2).
\end{align*} 

\textbf{Step 2.} We control $ \|\matder^{4-k}v\|_{H^{3k/2}(\Omega_0)}^2$ by the pressure terms.  
Note that 
\begin{equation*}
\|\matder v\|_{H^{9/2}(\Omega_0)}^2\le C(\| p\|_{H^{11/2}(\Omega_0)}^2+\| H\|_{H^{11/2}(\Omega_0)}^2\| H\|_{H^{9/2}(\Omega_0)}^2)\le C(\| p\|_{H^{11/2}(\Omega_0)}^2  +1),
\end{equation*} 
and by Lemma \ref{l:nab H, nab H}, we have
\begin{equation*}
\|\matder^2 v\|_{H^{3}(\Omega_0)}^2
\le\|\nabla\matder p\|_{H^{3}(\Omega_0)}^2+\|[\matder,\nabla] p\|_{H^{3}(\Omega_0)}^2+C\le C(\|\nabla\matder p\|_{H^{3}(\Omega_0)}^2+\|p\|_{H^{4}(\Omega_0)}^2+1).
\end{equation*} 
Similarly, applying Lemmas \ref{l:nab^2H,H} and \ref{l:[Dt^l,nab]p}, we  obtain 
\begin{align*}
\|\matder^3 v\|_{H^{3/2}(\Omega_0)}^2\le{}& C(\|\nabla\matder^2 p\|_{H^{3/2}(\Omega_0)}^2+\|\nabla\matder p\|_{H^{3/2}(\Omega_0)}^2+\|p\|_{H^{9/2}(\Omega_0)}^2+1),\\
\|\matder^4 v\|_{L^2(\Omega_0)}^2\le{}& C(\|\nabla\matder^3 p\|_{L^2(\Omega_0)}^2+\|\nabla\matder^2 p\|_{L^2(\Omega_0)}^2+\|\nabla\matder  p\|_{H^2(\Omega_0)}^2+\|p\|_{H^{9/2}(\Omega_0)}^2).
\end{align*}  

\textbf{Step 3.} We show that $\sum_{k=0}^{3}\|\matder^{3-k}p\|_{H^{3k/2+1}(\Omega_0)}^2\le C$.  Consider the following elliptic equation
\begin{equation*}
\begin{cases}
-\laplace p=\partial_i v^j\partial_j v^i-\partial_i H^j\partial_j H^i,\quad & \text{ in } \Omega_0,\\
p=\meancurv_{\Gamma_0},\quad & \text{ on } \parOmega_0.		
\end{cases}
\end{equation*}
We find that $\|p\|_{H^{11/2}(\Omega_0)}\le C (\|\partial_i v^j\partial_j v^i-\partial_i H^j\partial_j H^i\|_{H^{7/2}(\Omega_0)}+\|\meancurv\|_{H^5(\parOmega_0)})\le C$ from the standard elliptic estimates.
Again by the elliptic estimates, 
it holds $\|\matder p\|_{H^{4}(\Omega_0)}\le C(\|\laplace \matder p\|_{H^{2}(\Omega_0)}+\|\matder p\|_{H^{7/2}(\parOmega_0)})$, 
and $\|\matder^2p\|_{H^{5/2}(\Omega_0)}\le C(\|\laplace \matder^2p\|_{H^{1/2}(\Omega_0)}+\|\matder^2p\|_{H^{2}(\parOmega_0)})$. 
Also,  by \eqref{e:har ext 3}, $\|\matder^3p\|_{H^1(\Omega_0)}\le C(\|\laplace \matder^3p\|_{L^2(\Omega_0)}+\|\matder^3p\|_{H^{1/2}(\parOmega_0)})$. 
The calculations of the remaining terms on the right-hand side are direct applications of Lemmas \ref{l:lapDtp} and \ref{l:Dt^lp}, and \eqref{e:Dtp}, since we have $\|p\|_{H^{11/2}(\Omega_0)}\le C$.

Finally, for $1\le j\le 3,\|\tangrad(\matder^j v\cdot\nu)\|_{L^2(\parOmega_0)}^2$  can be estimated by the trace theorem due to the regularity of the boundary. Using the mean curvature bound, we apply Lemma \ref{l:2.12 jul} to obtain $\|B\|_{H^2(\parOmega_0)}\le C$ and therefore $\|\tangrad(\matder^j v\cdot\nu)\|_{L^2(\parOmega_0)}^2 \le C(\|\tangrad\matder^j v\star\nu\|_{L^2(\parOmega_0)}^2+\|\matder^j v\star B\|_{L^2(\parOmega_0)}^2)  
\le C$.
This concludes the proof of the proposition. 
\end{proof} 
\section{Estimates for the error terms}\label{s:est error}

In this section, we estimate the error terms by the energy functional and the pressure. We start with the following results. 

\begin{lemma}\label{l:B by p}
Assume that  \eqref{e:a priori} holds for $T>0$. Then, we have $\|B\|_{H^{5/2}(\parOmega_t)}\le C$, and  $\|B\|_{H^k(\parOmega_t)}\le  C( 1+\|p\|_{H^k(\parOmega_t)}) $ for $k\in \N /2 ,k \leq 9/2$.  Assume further that  $\sup_{0\le t< T}E_{l-1}(t) \leq C$ for $l \geq 4$. Then, it holds $\|B\|_{H^{3l/2-1}(\parOmega_t)} \leq C$, and $\|B\|_{H^k(\parOmega_t)}\le  C(1+\|p\|_{H^k(\parOmega_t)})$ for $k\in \N /2 ,k \leq 3l/2+1$. 
\end{lemma}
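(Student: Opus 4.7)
My plan centers on Simon's identity on the hypersurface $\parOmega_t$ (see \cite{Fusco2020}), which has the schematic form
\begin{equation*}
-\surflaplace B = \tangrad^2 \meancurv + \meancurv\star B\star B - |B|^2 B,
\end{equation*}
combined with the surface elliptic regularity estimate $\|B\|_{H^k(\parOmega_t)} \lesssim \|\surflaplace B\|_{H^{k-2}(\parOmega_t)} + \|B\|_{L^2(\parOmega_t)}$ and the boundary condition $p=\meancurv$. Schematically this yields
\begin{equation*}
\|B\|_{H^k(\parOmega_t)} \le C\bigl(1 + \|p\|_{H^k(\parOmega_t)} + \text{lower-order nonlinear terms in }B\bigr),
\end{equation*}
which is the principal mechanism by which pressure regularity is converted into curvature regularity.

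For the first claim, I would start from the baseline bound $\|B\|_{H^{1+\delta}(\parOmega_t)} \le C$: this follows immediately from the a priori assumption $\|h\|_{H^{3+\delta}(\Gamma)} \le C$ together with \eqref{e:B by h}, since $B$ is a smooth function of $(h,\tangrad h)$ multiplied by $\tangrad^2 h$. In particular $\|B\|_{L^\infty(\parOmega_t)} \le C$ by Sobolev embedding. I would then run an induction on $k \in \frac{\N}{2}$ from the baseline up to $k = \frac{9}{2}$, using Simon's identity to express $\surflaplace B$ in terms of $\tangrad^2 p$ and lower-order quadratic/cubic expressions in $B$; the nonlinear terms are absorbed using the inductive hypothesis combined with the half-integer product estimates already employed in the paper (defining $H^{\frac 12}(\parOmega_t)$ via the harmonic extension). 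This yields the pressure-dependent bound for the stated range of $k$. The unconditional bound $\|B\|_{H^{\frac 52}(\parOmega_t)} \le C$ then follows by specializing to $k = \frac{5}{2}$ and invoking Lemma \ref{l:p est 5}, whose consequence $\|p\|_{H^3(\Omega_t)} \le C$ gives $\|p\|_{H^{\frac 52}(\parOmega_t)} \le C$ via the trace theorem.

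For the higher-order statement when $l \ge 4$, the hypothesis $E_{l-1}(t) \le C$ gives enhanced control on $v, H$ and (via elliptic estimates for $p$ built from \eqref{e:laplace p} together with the higher-order identities in Lemma \ref{l:lapDtp}) on $\|p\|_{H^{\frac{3l}{2}}(\parOmega_t)}$; the same Simon-identity induction then extends the pressure-dependent bound to all $k \in \frac{\N}{2}$ with $k \le \frac{3l}{2}+1$, and choosing $k = \frac{3l}{2}-1$ yields the unconditional estimate $\|B\|_{H^{\frac{3l}{2}-1}(\parOmega_t)} \le C$. The main technical obstacle will be executing the induction cleanly at half-integer orders: one must verify that the cubic and quadratic contributions from Simon's identity are strictly subcritical at each level, which ultimately rests on the Kato-Ponce-type inequalities \eqref{e:Kato Ponce} and the fact that the top-order term $\tangrad^2 \meancurv$ dominates, so that each inductive step only consumes the immediately preceding level of regularity.
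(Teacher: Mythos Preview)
Your approach is correct and identifies the right mechanism: converting control of the mean curvature $\meancurv = p|_{\parOmega_t}$ into control of the full second fundamental form $B$ via Simon's identity and surface elliptic regularity. However, the paper takes a much shorter route: it invokes Lemma~\ref{l:2.12 jul} (Proposition~2.12 of \cite{Julin2021}) as a black box, which already packages the Simon-identity bootstrap into the single statement $\|B\|_{H^k(\parOmega_t)} \le C(1+\|\meancurv\|_{H^k(\parOmega_t)})$ under the hypothesis $\|B\|_{L^\infty}+\|B\|_{H^{m-2}} \le C$ for $k \le m$. The paper then runs a short two-step bootstrap (first $m=3$ using the a~priori bound $\|B\|_{H^1} \le C$ and the pressure estimate $\|p\|_{H^3(\Omega_t)} \le C$ from Section~\ref{s:p est}, then $m=\tfrac{9}{2}$ using the newly acquired $\|B\|_{H^{5/2}} \le C$), rather than the full half-integer induction you propose. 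For the higher-order part the paper obtains the pressure bound directly from $\nabla p = -\matder v + H\cdot\nabla H$ and the trace theorem, giving $\|p\|_{H^{\frac{3l}{2}-1}(\parOmega_t)} \le C$ (note: one order less than the $\|p\|_{H^{\frac{3l}{2}}}$ you wrote, though this suffices), and then iterates Lemma~\ref{l:2.12 jul} again. Your plan to unpack the induction via Simon's identity would reprove that cited lemma in situ; this is perfectly valid and arguably more self-contained, but the paper's citation makes the argument a few lines long.
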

\begin{proof}
We recall \eqref{e:p H^3} that $\|p\|_{H^3(\Omega_t)}\le C$ by the results in Section \ref{s:p est}. Since $\parOmega_t$ is uniformly $H^{3+\delta}(\Gamma)$-regular, it holds $\|B\|_{L^\infty(\parOmega_t)}+\|B\|_{H^1(\parOmega_t)}\le C$.
Applying Lemma \ref{l:2.12 jul}, for $k\in\N /2 ,k\le 3$, we see that $\|B\|_{H^k(\parOmega_t)}\le C(1+\|\meancurv\|_{H^k(\parOmega_t)})\le C(1+\|p\|_{H^k(\parOmega_t)})$, 
and $\|B\|_{H^{5/2}(\parOmega_t)}\le C$. Again by Lemma \ref{l:2.12 jul}, the first claim follows. As for $l\ge 4$, the assumption implies that
\begin{align*}
\|p\|_{H^{3l/2-1}(\parOmega_t)}^2&\le C(1+\|\nabla p\|_{H^{3(l-1)/2}(\Omega_t)}^2)\\
&\le C(1+\|\matder v\|_{H^{3(l-1)/2}(\Omega_t)}^2+\|H\cdot\nabla H\|_{H^{\lfloor3l/2-1\rfloor}(\Omega_t)}^2)\le C.
\end{align*}
For $l=4$,  we have $\|p\|_{H^5(\parOmega_t)}\le C$ and  $\|B\|_{H^{9/2}(\parOmega_t)}\le C$ by the first claim. Moreover, by Lemma \ref{l:2.12 jul}, it implies $\|B\|_{H^5(\parOmega_t)}\le C(1+\|p\|_{H^5(\parOmega_t)})\le C$, i.e., $\|B\|_{H^{3l/2-1}(\parOmega_t)}\le C$ in this case. Therefore, it holds $\|B\|_{H^k(\parOmega_t)}\le C(1+\|\meancurv\|_{H^k(\parOmega_t)})\le(1+\|p\|_{H^k(\parOmega_t)}), k\in \N /2 , k\le 3l/2+1$.
Using a similar argument, the second claim follows for $l\ge 5$. 
\end{proof}

\begin{lemma}\label{l:err est 1}
Assume that  \eqref{e:a priori} holds for $T>0$. We have $\|R_{\Rdiv}^l\|_{H^{1/2}(\Omega_t)}^2 \leq C( 1+\|\nabla^2 p\|_{H^{1/2}(\Omega_t)}^2)  \barE(t)$ for $l\le 3$. 
Assume further that $\sup_{0\le t< T}E_{l-1}(t)\le C$ for $l\ge 4$. Then, we have $\|R_{\Rdiv}^l\|_{H^{1/2}(\Omega_t)}^2 \leq C E_l(t)$,
and there exists a constant $\vare>0$ small enough such that $\|R_{\Rdiv}^{l-k}\|_{H^{3k/2-1}(\Omega_t)}^2 \leq \vare E_l(t)+C_\vare$ for $k\in \N ,1\le k\le l$.
\end{lemma} 
\begin{proof}
Thanks to the regularity of the free boundary in Lemma \ref{l:B by p},  
it is feasible to extend functions in $H^2(\Omega_t)$ to the entire space $\mathbb{R}^3$ (e.g., {\cite[Proposition 2.1]{Julin2024}}) 
and then apply Lemma \ref{l:Kato Ponce}. To simplify the notation, we will not distinguish between the original function and its extension. 

It suffices to estimate $R_{\Rdiv}^3=\sum_{2\le m\le 4}\sum_{|\beta| \leq 5-m} 
\nabla \matder^{\beta_1} v \star \cdots \star \nabla \matder^{\beta_{m-1}} v \star \nabla \matder^{\beta_{m}} v$ defined in \eqref{e:def error 1} since $R_{\Rdiv}^{1}$ and $R_{\Rdiv}^{2}$ are easier to handle.  
We deal with the case of $m=2$, i.e., $\sum_{|\beta|\le 3}\nabla\matder^{\beta_1}v\star\nabla\matder^{\beta_2} v$ and we only show the estimates when $|\beta|=3$. From \eqref{e:a priori} and Lemma \ref{l:Kato Ponce}, we see that
\begin{align*}
\|\nabla v\star \nabla \matder^3 v\|_{H^{1/2}(\Omega_t)}
\le{}& C(\|\nabla v\|_{L^\infty(\Omega_t)}\|\nabla \matder^3v\|_{H^{1/2}(\Omega_t)}+ \|\nabla v\|_{W^{1/2,6}(\Omega_t)}\|\nabla \matder^3v\|_{L^{3}(\Omega_t)})\\
\le{}& C(\|\nabla v\|_{L^\infty(\Omega_t)}\|\nabla \matder^3v\|_{H^{1/2}(\Omega_t)}+ \|  v\|_{H^{5/2}(\Omega_t)}\|  \matder^3v\|_{H^{3/2}(\Omega_t)}) \le C\barE(t)^{1/2},\\
\|\nabla\matder^2 v\star \nabla \matder  v\|_{H^{1/2}(\Omega_t)}
\le{}&   C(\|\nabla \matder  v\|_{H^{1/2}(\Omega_t)}\|\nabla \matder^2v\|_{L^{\infty}(\Omega_t)}+ \|\nabla\matder  v\|_{L^{3}(\Omega_t)}\|  \nabla\matder^2v\|_{W^{1/2,6}(\Omega_t)})\\
\le{}& C(1+\|\nabla^2 p\|_{H^{1/2}(\Omega_t)})\barE(t)^{1/2}.
\end{align*}
If $l\ge 4$, the assumption $E_{l-1}(t)\le C$ also ensures that the functions in $H^{3l/2+1}(\Omega_t)$ can be extended by Lemma \ref{l:B by p} and the extension theorem (e.g., {\cite[Proposition 2.1]{Julin2024}}). Then, it follows that $\|\nabla v\star \nabla \matder^l v\|_{H^{1/2}(\Omega_t)} 
\le C(\|\nabla v\|_{L^\infty(\Omega_t)}\|\nabla \matder^lv\|_{H^{1/2}(\Omega_t)}+ \|  v\|_{H^{5/2}(\Omega_t)}\|  \matder^lv\|_{H^{3/2}(\Omega_t)}) \le CE_l(t)^{1/2}$. 
For $1\le j\le l-j\le l-1$, we have $j\le \lfloor l/2\rfloor\le l-2$ due to $l\ge 4$, and obtain
\begin{align*}
&\|\nabla \matder^j v\star \nabla \matder^{l-j} v\|_{H^{1/2}(\Omega_t)}\\
\le{}& C(\|\nabla \matder^j v\|_{L^\infty(\Omega_t)}\|\nabla \matder^{l-j} v\|_{H^{1/2}(\Omega_t)}+ \|\matder^j v\|_{H^{5/2}(\Omega_t)}\|  \nabla \matder^{l-j} v\|_{H^{3/2}(\Omega_t)})\le CE_l(t)^{1/2},
\end{align*}
where we have used the fact that $\|\matder^j v\|_{H^{5/2+\vare}(\Omega_t)}\le E_{l-1}(t)\le C$. Again from the hypothesis $E_{l-1}(t)\le C$, the terms involving the product of more than three items can be controlled since we will have fewer material derivatives in this case. 

To prove the last claim, we first estimate that   
$\|R_{\Rdiv}^{0}\|_{H^{3l/2-1}(\Omega_t)}^2 
\leq C\|\nabla v\|_{L^{\infty}(\Omega_t)}^2\|\nabla v\|_{H^{3l/2-1}(\Omega_t)}^2 \le C\|\nabla v\|_{H^{3l/2-1}(\Omega_t)}^2$.  
By interpolation, we have $\|R_{\Rdiv}^{0}\|_{H^{3l/2-1}(\Omega_t)}^2 \le \vare E_l(t)+C_\vare$ for $ l=5,7,\cdots,$ 
and
$\|R_{\Rdiv}^{0}\|_{H^{3l/2-1}(\Omega_t)}^2\le C E_{l-1}(t)\le C$ for $ l=4,6,\cdots$  
Then we control the case of $k=1$. When $l\ge5$, applying the previous estimates, it holds $\|R^{l-1}_{\Rdiv}\|_{H^{1/2}(\Omega_t)}^2\le CE_{l-1}(t)\le C$. If $l=4$, one has $\|R_{\Rdiv}^{l-1}\|_{H^{1/2}(\Omega_t)}^2 \leq C(1+\|\nabla^2 p\|_{H^{1/2}(\Omega_t)}^2) E_{l-1}(t)\le C$, since $\|\nabla^2 p\|_{H^{1/2}(\Omega_t)}^2\le \|\nabla (H\cdot\nabla H-\matder v)\|_{H^{1/2}(\Omega_t)}^2\le C$. 

We are left with the case of $2\le k\le l-1$. Note that  $R_{\Rdiv}^{l-k}= \sum_{2\le m\le l-k+1}\sum_{|\beta| \leq l-k+2-m} 
\nabla \matder^{\beta_1} v \star \cdots \star \nabla \matder^{\beta_{m-1}} v \star \nabla \matder^{\beta_{m}} v$.
We only estimate the case of $k=m=2$, i.e., $\nabla \matder^{l-2-j} v \star  \nabla \matder^j v$.  
As before, we assume that $0\le j\le l-2-j\le l-2$ and it holds $j\le\lfloor(l-2)/2\rfloor \le l-2,l=4$, and $
j\le\lfloor(l-2)/2\rfloor \le l-3\text{, }l\ge 5$.
We deal with the first case, i.e.,  $\|\nabla \matder v \star  \nabla \matder v\|_{H^{2}(\Omega_t)}^2+\|\nabla \matder^2 v \star  \nabla v\|_{H^{2}(\Omega_t)}^2$, since the same arguments work for $l\ge 5$ ($j\le l-3$ in this case). We deduce that
\begin{align*}
\|\nabla v\star \nabla \matder^{2} v\|_{H^{2}(\Omega_t)}^2 
\le{}& C(\|\nabla v\|_{L^\infty(\Omega_t)}^2\|\nabla \matder^{2}v\|_{H^{2}(\Omega_t)}^2+ \|\nabla  v\|_{H^{3}(\Omega_t)}^2\|\nabla  \matder^{2}v\|_{H^{5/2}(\Omega_t)}^2)\\
\le{}&  
\vare\|\nabla  \matder^{2}v\|_{H^{3}(\Omega_t)}^2+C \|\nabla  \matder^{2}v\|_{H^{2}(\Omega_t)}^2 \le \vare E_l(t)+C_\vare,\\
\|\nabla \matder v\star \nabla \matder v\|_{H^{2}(\Omega_t)}^2 \le{}& C\|\nabla \matder v\|_{L^\infty(\Omega_t)}^2\|\nabla \matder v\|_{H^{2}(\Omega_t)}^2 \le CE_{l-1}(t)\le C.
\end{align*}
The proof is complete.
\end{proof}
 
\begin{lemma}\label{l:err est 2}
Assume that  \eqref{e:a priori} holds for $T>0$. For $l\le 3$, we have
\begin{equation*}
\|R_{\nabla H,H}^l\|_{H^{1/2}(\Omega_t)}^2+\|R_{\nabla H,\nabla H}^l\|_{H^{1/2}(\Omega_t)}^2+\|R_{\nabla^2 H,H}^l\|_{H^{1/2}(\Omega_t)}^2\leq C(1+\|\nabla^2p\|_{H^{1/2}(\Omega_t)}^2)  \barE(t) .
\end{equation*}
Assume further that $\sup_{0\le t< T}E_{l-1}(t)\le C$ for $l\ge 4$, then we have  
\begin{align}
&\|R_{\nabla H,H}^l\|_{H^{1/2}(\Omega_t)}^2+\|R_{\nabla H,\nabla H}^l\|_{H^{1/2}(\Omega_t)}^2+\|R_{\nabla^2 H,H}^l\|_{H^{1/2}(\Omega_t)}^2 \leq C E_l(t),\nonumber\\
&\|R_{\nabla H,H}^0\|_{H^{3l/2-1}(\Omega_t)}^2+\|R_{\nabla H,\nabla H}^0\|_{H^{3l/2-1}(\Omega_t)}^2\leq \vare E_l(t)+C_\vare,\nonumber\\
&\|R_{\nabla^2 H,H}^0\|_{H^{3k/2-1}(\Omega_t)}^2\leq C\|\curl H\|_{H^{\lfloor3l/2+1/2\rfloor}(\Omega_t)}^2,\nonumber\\
&\|R_{\nabla H,H}^{l-k}\|_{H^{3k/2-1}(\Omega_t)}^2+\|R_{\nabla H,\nabla H}^{l-k}\|_{H^{3k/2-1}(\Omega_t)}^2+\|R_{\nabla^2 H,H}^{l-k}\|_{H^{3k/2-1}(\Omega_t)}^2\le \vare E_l(t)+C_\vare,\label{e:err est 2-5}
\end{align} 
for $k\in \N ,1\le k< l$. In the above, $\vare>0$ is a constant small enough. 
\end{lemma} 
\begin{proof}
We note that $R^l_{\nabla^2 H,H}$ contains all the highest-order terms in $R^l_{\nabla H,H}$ and $R^l_{\nabla H,\nabla H}$ and we focus on the estimate for $R^l_{\nabla^2 H,H}$. To control $R^3_{\nabla^2 H,H}$ in the case of $l\le 3$, we recall $R^3_{\nabla^2 H, H}$ in Lemma \ref{l:nab^2H,H}. 
From \eqref{e:a priori}, we have $\|\nabla^{4}\curl H\star H\star \cdots\star H\|_{H^{1/2}(\Omega_t)}^2\le C\|H\|_{H^6(\Omega_t)}\le C\barE(t)$, 
and $\|\nabla^2\matder^2 v\star \nabla F_2\star F_3\|_{H^{1/2}(\Omega_t)}^2\le C\barE(t)$,  
as in Lemma \ref{l:err est 1}. The leading terms in $R^3_{\nabla^2 H, H}$ have been controlled, and the estimates of the lower-order terms follow from the same arguments as in Lemma \ref{l:err est 1}.

As for $l\ge 4$, to prove the first result, it is sufficient to bound $\nabla^{l+1}\curl v\star H\star \cdots\star H$ and $\nabla^{l+1}\curl H\star H\star \cdots\star H$ since the other terms are either simpler or have already been estimated in Lemma \ref{l:err est 1}. From the assumption,  $\|v\|_{H^{\lfloor3l/2\rfloor}(\Omega_t)}+\|H\|_{H^{\lfloor3l/2\rfloor}(\Omega_t)}\le C$. As before, we extend the functions and estimate as in Lemma \ref{l:err est 1} to obtain 
\begin{align*}
&\|\nabla^{l+1}\curl v\star H\star \cdots\star H\|_{H^{1/2}(\Omega_t)}\\
\le{}& C(\|H\star \cdots\star H\|_{L^\infty(\Omega_t)}\|\nabla^{l+1}\curl v\|_{H^{1/2}(\Omega_t)}+ \|H\star \cdots\star H\|_{W^{1/2,6}(\Omega_t)}\|\nabla^{l+1}\curl v\|_{L^{3}(\Omega_t)})\\
\le{}& C\| v\|_{H^{l+5/2}(\Omega_t)}\le CE_l(t)^{1/2}.
\end{align*}
In the last step,  the condition $l\ge 4$ implies $5/2+l\le \lfloor3(l+1)/2\rfloor$ and therefore, it holds $\| v\|_{H^{l+5/2}(\Omega_t)}\le \| v\|_{H^{\lfloor3(l+1)/2\rfloor}(\Omega_t)}$.

Next, to verify \eqref{e:err est 2-5}, we shall control $\|R_{\nabla^2 H,H}^{l-k}\|_{H^{3k/2-1}(\Omega_t)}^2$ for $1\le k\le l-1$. We concentrate on the estimate for  $\|\nabla^{l-k+1}\curl v\star H\star \cdots\star H\|_{H^{3k/2-1}(\Omega_t)}^2$. This time we obtain for $1\le k<l$ that  
\begin{equation*}
\|\nabla^{l-k+1}\curl v\star H\star \cdots\star H\|_{H^{3k/2-1}(\Omega_t)}^2 
\le C\| v\|_{H^{3l/2+1/2}(\Omega_t)}.
\end{equation*}
By interpolation, it holds  $\|v\|_{H^{3l/2+1/2}(\Omega_t)}^2\le \vare \|v\|_{H^{\lfloor3(l+1)/2\rfloor}(\Omega_t)}^2+C_\vare \| v\|_{H^{\lfloor3l/2\rfloor}(\Omega_t)}^2\le \vare E_l(t)+C_\vare$.
Finally, to obtain the last two estimates, we only need to bound the most difficult term  $R_{\nabla^2 H,H}^0=(H\cdot\nabla)\curl H$. Since $l\ge 4$, we have $\|R_{\nabla^2 H,H}^0\|_{H^{3l/2-1}(\Omega_t)}^2
\le C\|H\|_{H^{\lfloor3l/2\rfloor}(\Omega_t)}^2\|\curl H\|_{H^{\lfloor3l/2+1/2\rfloor}(\Omega_t)}^2\le C\|\curl H\|_{H^{\lfloor3l/2+1/2\rfloor}(\Omega_t)}^2$, 
and the proof is complete.
\end{proof}
 
\begin{lemma}\label{l:err est 3}
Assume that  \eqref{e:a priori} holds for $T>0$. We have $\|R_{\Rbulk}^l\|_{L^{2}(\Omega_t)}^2 \leq C( 1+\|\nabla p\|_{H^{3/2}(\Omega_t)}^2)  \barE(t)$ for $l\le 3$. Assume further that $\sup_{0\le t< T}E_{l-1}(t)\le C$ for $l\ge 4$. Then, it follows that $\|R_{\Rbulk}^l\|_{L^{2}(\Omega_t)}^2 \leq C E_l(t)$,
and $\|R_{\Rbulk}^{l-k}\|_{H^{3(k-1)/2}(\Omega_t)}^2 \le C$ for $ k\in \N ,1\le k\le l-1$.
\end{lemma}
\begin{proof}
To prove the first claim, we  estimate  
\begin{equation*} 
R_{\Rbulk}^3=\sum_{1\le m\le 4}\sum_{\substack{|\beta| \leq 3,|\alpha|\le 1\\\beta_1,\cdots,\beta_{m-1}\ge 1}} a_{\alpha,\beta}(\nabla v) \nabla \matder^{\beta_1} v \star \cdots \star \nabla \matder^{\beta_{m-1}} v \star \nabla^{\alpha_1} \matder^{\alpha_2+\beta_{m}} v.
\end{equation*}
If $m=1$, we consider the case of $|\beta|=\beta_1=3$ and $|\alpha|=1$. We should control $a(\nabla v)  \matder^{4} v+b(\nabla v)\nabla  \matder^{3} v$. From the hypothesis \eqref{e:a priori}, it is clear that $\|a(\nabla v)  \matder^{4} v\|_{L^2(\Omega_t)}^2+\|b(\nabla v)\nabla  \matder^{3} v\|_{L^2(\Omega_t)}^2\le C\barE(t)$.
For $m=2,|\beta|=3$ and $|\alpha|=1$, we show the estimates of $a(\nabla v)\nabla \matder v\star\matder^3v$ and $b(\nabla v)\nabla \matder^2 v\star\matder^2v$. Choosing $1/p+1/q=1/2,p=3/\delta$ with $\delta>0$ small enough, we see that $\|\nabla^2H\|_{L^{q}(\Omega_t)}^2\le C\|H\|_{H^{5/2+\delta}(\Omega_t)}^2$,
\begin{align*}
\|a(\nabla v)\nabla \matder v\star\matder^3v\|_{L^2(\Omega_t)}^2 
\le{}& C\|\nabla^2p+\nabla H\star \nabla H+H\star\nabla^2H\|_{L^{q}(\Omega_t)}^2\|\matder^3v\|_{H^{3/2}(\Omega_t)}^2\\
\le{}&  
C(1+\|\nabla^2p\|_{H^{1/2}(\Omega_t)}^2)\barE(t),
\end{align*} 
and $
\|a(\nabla v)\nabla \matder^2 v\star\matder^2v\|_{L^2(\Omega_t)}^2
\le{} 
C\|\matder^2v\|_{L^2(\Omega_t)}^2\barE(t)$.
To control    $\|\matder^2v\|_{L^2(\Omega_t)}^2$, from the boundedness $\|\laplace p\|_{H^1(\Omega_t)}\le C$ and using \eqref{e:laplace p}, \eqref{l:lapDtp}, \eqref{e:def error 2},  together with \eqref{e:har ext 2}, we obtain
\begin{align*}
\|\matder^2v\|_{L^2(\Omega_t)}^2 
\le{}&\|\nabla \matder  p\|_{L^2(\Omega_t)}^2+\|[\matder,\nabla] p\|_{L^2(\Omega_t)}^2+\|\matder H\star \nabla H+H\star\matder\nabla H\|_{L^2(\Omega_t)}^2\\
\le{}&\|\laplace\matder  p\|_{L^2(\Omega_t)}^2+\|\matder p\|_{H^{1/2}(\parOmega_t)}^2+\|\nabla v\star\nabla p\|_{L^2(\Omega_t)}^2\\
&+\|H\star \nabla v\star \nabla H+H\star\nabla v\star\nabla H+H\star\nabla^2 v\star H\|_{L^2(\Omega_t)}^2\\
\le{}&\|\dive\dive(v\otimes\nabla p)\|_{L^2(\Omega_t)}^2+\|\nabla p\|_{L^2(\Omega_t)}^2+C\\
&+\|\dive R^0_{\Rbulk}+\nabla^2 v\star \nabla H\star H+\nabla v\star \nabla H\star \nabla H\\
&\quad \ \ +\nabla^2 H\star \nabla v\star H+v\star\nabla^2 H\star\nabla H\|_{L^2(\Omega_t)}^2 \\
\le{}&\|\partial_j\partial_i(v^i\partial_j p)\|_{L^2(\Omega_t)}^2+\|\nabla p\|_{H^1(\Omega_t)}^2+C 
\le C(1+\|\nabla p\|_{H^1(\Omega_t)}^2).
\end{align*}
In the case of $m=3$ and $m=4$, we estimate in the same fashion, and obtain $\|R_{\Rbulk}^l\|_{L^{2}(\Omega_t)}^2 \leq C(1+\|\nabla p\|_{H^{3/2}(\Omega_t)}^2) \barE(t)$,
as desired.

To control $R_{\Rbulk}^l$ for $l\ge 4$,  we focus on the case of $|\beta|=l$ and $|\alpha|=1$. If $m=1$, it holds $\|a(\nabla v) \matder^{l+1} v+b(\nabla v)\nabla\matder^{l}v\|_{L^{2}(\Omega_t)}^2 \le CE_{l-1}(t)\le C$. 
Next, we handle the product of functions as follows. We simply assume  $\alpha_2=1$ since the material derivative $\matder$ is $1/2$-higher than the spatial derivative. If $ 1\le j\le l+1-j\le l$, it follows that $1\le j\le\lfloor (l+1)/2 \rfloor\le l-2$, and we have $\|a(\nabla v) \nabla \matder^{j} v \star  \matder^{l+1-j} v\|_{L^{2}(\Omega_t)}^2 \le C \|  \nabla \matder^{j} v  \|_{L^{\infty}(\Omega_t)}^2\|  \matder^{l+1-j} v  \|_{L^{2}(\Omega_t)}^2\le CE_l(t)$. 
If $1\le l+1-j<j \le l$, we find that $\lfloor(l+1)/2\rfloor+1\le j$ and $1\le l+1-j\le l-2$. Then, we obtain $\|a(\nabla v) \nabla \matder^{j} v \star  \matder^{l+1-j} v\|_{L^{2}(\Omega_t)}^2
\le C \|\matder^{j} v  \|_{H^{1}(\Omega_t)}^2\|  \matder^{l+1-j} v  \|_{H^{3/2+\vare}(\Omega_t)}^2\le CE_l(t)$. 
The others can be estimated in the same way.

We are left with the last claim. For $k=1$, it follows by applying the above estimates with $l-1$ if $l\ge 5$. As $k=1$ and $l=4$, it follows from the hypothesis that $E_3(t)\le C$. Therefore, $\|\nabla p\|_{H^1(\Omega_t)}^2\le C\|H\cdot\nabla H-\matder v\|_{H^1(\Omega_t)}^2\le C$. This concludes the proof for $k=1$. Assume that $2\le k\le l-1$ and we shall control $\|R_{\Rbulk}^{l-k}\|_{H^{3(k-1)/2}(\Omega_t)}^2$ defined in \eqref{e:def error 2}: 
\begin{equation*} 
R_{\Rbulk}^{l-k}=\sum_{1\le m\le l-k+1}\sum_{\substack{|\beta| \leq l-k,|\alpha|\le 1,\beta_1,\cdots,\beta_{m-1}\ge 1}} a_{\alpha,\beta}(\nabla v) \nabla \matder^{\beta_1} v \star \cdots \star \nabla \matder^{\beta_{m-1}} v \star \nabla^{\alpha_1} \matder^{\alpha_2+\beta_{m}} v. 
\end{equation*}
If $m=1,|\beta|=l-k$ and $|\alpha|=1$, it is clear that $\|a(\nabla v)\matder^{l+1-k}v+b(\nabla v)\nabla\matder^{l-k}v\|_{H^{3(k-1)/2}(\Omega_t)}^2\le CE_{l-1}(t)\le C$.
To bound the product of functions, e.g.,  $m=2,|\beta|=l-k,|\alpha|=1$ and $1\le j\le l-k-j\le l-k-1$, we note that $1\le j\le\lfloor (l-k)/2\rfloor$ and 
\begin{equation*}
\|a(\nabla v)\|_{H^{3k/2-1/2}(\Omega_t)}^2\le C\|\nabla v\|_{L^{\infty}(\Omega_t)}^2 \cdots \|\nabla v\|_{L^\infty(\Omega_t)}^2\|v\|_{\lfloor3l/2\rfloor(\Omega_t)}^2 
\le C.
\end{equation*}
This, combined with the Sobolev embedding and Lemma \ref{l:Kato Ponce}, we deduce  that
\begin{align*}
&\|a(\nabla v) \nabla \matder^{j} v \star \nabla^{\alpha_1} \matder^{\alpha_2+l-k-1}v\|_{H^{3(k-1)/2}(\Omega_t)}^2\\
\le{}& C\|a(\nabla v)\|_{W^{3(k-1)/2,6}(\Omega_t)}^2\| \nabla \matder^{j} v \star \nabla^{\alpha_1} \matder^{\alpha_2+l-k-1}v\|_{L^{3}(\Omega_t)}^2\\
& +C\|a(\nabla v)\|_{L^{\infty}(\Omega_t)}^2\| \nabla \matder^{j} v \star \nabla^{\alpha_1} \matder^{\alpha_2+l-k-1}v\|_{H^{3(k-1)/2}(\Omega_t)}^2\\ 
\le{}& C\|\nabla \matder^{j} v \|_{H^{3k/2-1/2}(\Omega_t)}^2\| \nabla^{\alpha_1} \matder^{\alpha_2+l-k-1}v\|_{L^{3}(\Omega_t)}^2\\
& + C\|\nabla \matder^{j} v \|_{L^{\infty}(\Omega_t)}^2\| \nabla^{\alpha_1} \matder^{\alpha_2+l-k-1}v\|_{H^{3(k-1)/2}(\Omega_t)}^2  
\le C,
\end{align*}
where we have used the fact that $\|\nabla \matder^{j} v \|_{H^{3k/2-1/2}(\Omega_t)}^2+\|\nabla \matder^{j} v \|_{L^{\infty}(\Omega_t)}^2\le C(\|\matder^{j} v \|_{H^{3k/2+1/2}(\Omega_t)}^2+\| \matder^{j} v \|_{H^{5/2+\vare}(\Omega_t)}^2)$ 
for $\vare>0$ small enough.   Thus, the proof is complete since the other terms can be estimated by using similar arguments.
\end{proof} 
 
\begin{lemma}\label{lem-err-est-3}
Assume that  \eqref{e:a priori} holds for $T>0$. We have $\|R^l_p\|_{H^{1/2}(\parOmega_t)}^2\le C\left( 1+\|\nabla p\|_{H^2(\Omega_t)}^2\right) \barE(t)$ for $l\le 3$.
Assume further that $\sup_{0\le t< T}E_{l-1}(t)\le C$ for $l\ge 4$.  Then it follows that $\|R^l_p\|_{H^{1/2}(\parOmega_t)}^2 \leq C E_l(t)$, 
and  $\|R^{l-k}_p\|_{H^{3k/2-1}(\parOmega_t)}^2 \leq \vare E_l(t)+C_\vare$ for $k\in \N ,1\le k\le l-1$ with $\vare>0$ small enough.
\end{lemma}
\begin{proof}
It is sufficient to show the estimate for $l=3$ since the other cases are easier. Recall the definition of $R^3_p$, 
we have 
\begin{align*}
\|\tangrad p\cdot\matder^3 v\|_{H^{1/2}(\parOmega_t)}^2&\le C(\|\tangrad p\|_{W^{1/2,4}(\parOmega_t)}^2\|\matder^3 v\|_{L^{4}(\parOmega_t)}^2+\|\tangrad p\|_{L^{4}(\parOmega_t)}^2\|\matder^3 v\|_{W^{1/2,4}(\parOmega_t)}^2)\\
&\le C\|\nabla p\|_{H^{3/2}(\Omega_t)}^2\barE(t),
\end{align*} 
we have used the fact that
$\|\tangrad^2 p\|_{L^2(\parOmega_t)}^2 
\le C(\|\nabla p\|_{H^1(\parOmega_t)}^2+\|\nabla p\star B\|_{L^2(\parOmega_t)}^2)\le C\|\nabla p\|_{H^1(\parOmega_t)}^2$,
and the trace theorem. Similarly, to deal with the term $-|B|^2\matder^3 v\cdot\nu$, we have $\|\matder^3 v\cdot\nu\|_{L^{2}(\parOmega_t)}^2\le C\|\matder^3 v\|_{H^{1}(\Omega_t)}^2\le C\barE(t)$, and $\|-|B|^2\matder^3 v\cdot\nu\|_{H^{1/2}(\parOmega_t)}^2\le C\||B|^2\|_{H^1(\parOmega_t)}^2\|\matder^3 v\cdot\nu\|_{H^{1}(\parOmega_t)}^2 \le C(1+\|\nabla p\|_{H^{1}(\Omega_t)}^2)\barE(t)$ by \eqref{e:a priori}. 
Again from \eqref{e:a priori}, it follows that  
\begin{align*}
&\|a_8(\nu,\nabla v)\star\nabla^2\matder^2v\|_{H^{1/2}(\parOmega_t)}^2 \le C(\|\nabla^2\matder^2v\|_{H^{1/2}(\parOmega_t)}^2+\|a_8(\nu,\nabla v)\|_{W^{1/2,4}(\parOmega_t)}^2\|\nabla^2\matder^2v\|_{H^{1/2}(\parOmega_t)}^2),\\ 
&\|a_9(\nu,\nabla v)\star\nabla\matder^2v\star B\|_{H^{1/2}(\parOmega_t)}^2  
\le C( \|\nabla\matder^2v\|_{W^{1/2 ,4}(\parOmega_t)}^2\|B\|_{L^4(\parOmega_t)}^2+\|B\|_{H^{1/2 }(\parOmega_t)}^2\|\nabla\matder^2v\|_{L^\infty(\parOmega_t)}^2),\\ 
&\|a_{10}(\nu,\nabla v)\star\nabla\matder^2v\star \nabla^2v\|_{H^{1/2 }(\parOmega_t)}^2  
\le C\|\nabla^2 v\|_{H^{1/2 }(\parOmega_t)}^2(\|\nabla\matder^2v\|_{W^{1/2 ,4}(\parOmega_t)}^2+\|\nabla\matder^2v\|_{L^\infty(\parOmega_t)}^2),  
\end{align*}
and they can be controlled by $C\barE(t)$. Moreover,
\begin{align*}
&\|a_{11}(\nu,\nabla v)\star\nabla^2\matder v\star \nabla\matder v\|_{H^{1/2 }(\parOmega_t)}^2\\ 
\le{}& C(\|\nabla^2\matder v\|_{L^\infty(\Omega_t)}^2\|\nabla\matder v\|_{H^1(\Omega_t)}^2 +\|\nabla^2\matder v\|_{W^{1,6}(\Omega_t)}^2\|\nabla\matder v\|_{L^3(\Omega_t)}^2)\\ \le{}& C\|\nabla(-\nabla p+H\cdot\nabla H)\|_{H^{1}(\Omega_t)}^2\|\matder v\|_{H^{4}(\Omega_t)}^2 \le C(1+\|\nabla^2p\|_{H^1(\Omega_t)}^2)\barE(t),
\end{align*}
and the other terms can be estimated in the same way. For $l\ge 4$, the proof is similar to \cite[Lemma 5.8]{Julin2024}, and we omit the details.
\end{proof}

Applying the above error estimates and recalling Proposition \ref{p:d/dt} as well as \eqref{e:p H^3}, we conclude this section by presenting the following improved version of Proposition \ref{p:d/dt}.

\begin{proposition}\label{p:d/dt 2}
Assume that  \eqref{e:a priori} holds for $T>0$. Then, we have $\frac{d}{dt}\barenergy(t)
\le C\barE(t)$, 
where $C$ depends on $T,\mathcal{N}_T,\mathcal{M}_T,\|v_0\|_{H^6(\Omega_0)},\|H_0\|_{H^6(\Omega_0)}$, and $\|\meancurv_{\parOmega_0}\|_{H^5(\parOmega_0)}$. For $l\ge 4$, assume further that $\sup_{0\le t< T}E_{l-1}(t)\le C$, then we have $\frac{d}{dt}\energy_l(t)
\le CE_l(t)$, where the constant $C$ depends on $T,\mathcal{N}_T,\mathcal{M}_T$, and $\sup_{0\le t< T}E_{l-1}(t)$.
\end{proposition}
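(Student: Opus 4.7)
The plan is to combine the raw time-derivative estimate of Proposition \ref{p:d/dt} with the error-term bounds established in Section \ref{s:est error} and the pressure bound \eqref{e:p H^3}. Since the right-hand side of Proposition \ref{p:d/dt} is already organized as the sum of energy, pressure, and the four canonical error families $R^l_{\Rdiv}$, $R^l_{\Rbulk}$, $R^l_{\nabla H,H}$, and $R^l_p$, the only thing to do is to substitute the correct bound for each error family and then absorb the pressure factor into the constant.

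For the first claim ($l\le 3$), I would proceed as follows. Apply Lemma \ref{l:err est 1} to get $\|R^l_{\Rdiv}\|_{H^{1/2}(\Omega_t)}^2\le C(1+\|\nabla^2p\|_{H^{1/2}(\Omega_t)}^2)\barE(t)$; apply Lemma \ref{l:err est 2} to get the same type of bound for $\|R^l_{\nabla H,H}\|_{H^{1/2}(\Omega_t)}^2$; apply Lemma \ref{l:err est 3} to get $\|R^l_{\Rbulk}\|_{L^2(\Omega_t)}^2\le C(1+\|\nabla p\|_{H^{3/2}(\Omega_t)}^2)\barE(t)$; and apply Lemma \ref{lem-err-est-3} to get $\|R^l_p\|_{H^{1/2}(\parOmega_t)}^2\le C(1+\|\nabla p\|_{H^{2}(\Omega_t)}^2)\barE(t)$. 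Inserting these into the inequality of Proposition \ref{p:d/dt} yields
\begin{equation*}
\frac{d}{dt}\barenergy(t)\le C\bigl(1+\|\nabla p\|_{H^{2}(\Omega_t)}^2\bigr)\barE(t),
\end{equation*}
because $\|\nabla p\|_{H^{2}(\Omega_t)}$ dominates the weaker pressure norms appearing in the other bounds. Now the pressure estimate \eqref{e:p H^3}, proved in Section \ref{s:p est}, gives $\sup_{t\in[0,T]}\|p\|_{H^3(\Omega_t)}\le C$ with the asserted dependence on $T,\mathcal{N}_T,\mathcal{M}_T,\|v_0\|_{H^6(\Omega_0)},\|H_0\|_{H^6(\Omega_0)}$, and $\|\meancurv_{\parOmega_0}\|_{H^5(\parOmega_0)}$, where the bound on $\|\tangrad^2 p\|_{L^2(\parOmega_0)}$ and $\|\nabla p\|_{H^2(\Omega_0)}$ needed in Lemma \ref{l:p est 4} and Lemma \ref{l:p est 5} is furnished by Proposition \ref{p:initial cond}. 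Thus the factor $(1+\|\nabla p\|_{H^2(\Omega_t)}^2)$ becomes a constant with the stated dependencies and the first claim follows.

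For the higher-order case ($l\ge 4$), the plan is identical under the induction hypothesis $\sup_{0\le t<T}E_{l-1}(t)\le C$. The improved error bounds \eqref{e:err est 1-2}, \eqref{e:err est 2-2}, together with the higher-order statements of Lemmas \ref{l:err est 3} and \ref{lem-err-est-3}, provide
\begin{equation*}
\|R^l_{\Rdiv}\|_{H^{1/2}}^2+\|R^l_{\Rbulk}\|_{L^2}^2+\|R^l_{\nabla H,H}\|_{L^2}^2+\|R^l_p\|_{H^{1/2}}^2\le CE_l(t),
\end{equation*}
with constants depending on $\sup_{t<T}E_{l-1}(t)$, and Proposition \ref{p:d/dt} immediately gives $\frac{d}{dt}\energy_l(t)\le CE_l(t)$.

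The only delicate point — which is really the content of the previous sections rather than a new obstacle here — is that in the bound for $R^l_{\Rbulk}$ and $R^l_p$ the pressure factor must be genuinely absorbed into a constant, not merely estimated by the current energy. This is exactly what \eqref{e:p H^3} accomplishes for $l\le 3$, and what the induction hypothesis accomplishes for $l\ge 4$ via $\|\nabla p\|_{H^{3(l-1)/2}(\Omega_t)}\lesssim \|\matder v\|_{H^{3(l-1)/2}}+\|H\cdot\nabla H\|_{H^{3(l-1)/2}}\le C$. Once this is observed, the proposition follows by direct substitution with no further calculation.
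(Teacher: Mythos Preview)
Your proposal is correct and follows exactly the paper's approach: combine Proposition \ref{p:d/dt} with the error-term bounds of Lemmas \ref{l:err est 1}--\ref{lem-err-est-3} and absorb the pressure factor via \eqref{e:p H^3} (with Proposition \ref{p:initial cond} handling the initial pressure data). The paper itself offers no additional argument beyond ``applying the above error estimates and recalling Proposition \ref{p:d/dt} as well as \eqref{e:p H^3}'', so your write-up is in fact more detailed than the original.
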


\section{Closing the energy estimates and proving the main theorems}\label{s:energy est}

In this section, we close the energy estimates and prove Theorem \ref{t:main t}. We  introduce the energy functional
\begin{align*}
\tilde{\energy}(t)\coloneqq {}&\frac{1}{2}\sum_{k=1}^{3}\left( \|\matder^{k+1}v\|_{L^2(\Omega_t)}^2+\|\matder^{k+1}H\|_{L^2(\Omega_t)}^2+ \|\tangrad(\matder^{k}v\cdot \nu)\|_{L^2(\parOmega_t)}^2\right) \\
&+\frac{1}{2}\left( \|\curl v\|_{H^{5}(\Omega_t)}^2+\|\curl H\|_{H^{5}(\Omega_t)}^2\right) +1,\\
\tilde{\energy}_l(t)\coloneqq {}&\frac{1}{2}\left( \|\matder^{l+1}v\|_{L^2(\Omega_t)}^2+\|\matder^{l+1}H\|_{L^2(\Omega_t)}^2+\|\tangrad(\matder^{l}v\cdot \nu)\|_{L^2(\parOmega_t)}^2\right) \\
&+\frac{1}{2}\left(\|\curl v\|_{H^{\lfloor (3l+1)/2\rfloor}(\Omega_t)}^2+\|\curl H\|_{H^{\lfloor(3l+1)/2\rfloor}(\Omega_t)}^2\right) +1,\quad l\ge 4.
\end{align*}
Note that from the a priori assumptions \eqref{e:a priori}, it holds $\|\curl v\|_{L^2(\Omega_t)}^2+\|\curl H\|_{L^2(\Omega_t)}^2\le C$.
By interpolation, we have $\tilde{\energy}(t) \leq C(\barenergy(t)+1)$ and $\tilde{\energy}_l(t) \leq C(\energy_l(t)+1)$ for $l\ge 4$. 

We will apply the following div-curl estimates in \cite[Section 3.1]{Julin2024}.
\begin{lemma}
Let the integer $l \geq 2$ and assume that $\|B_{\parOmega} \|_{H^{3l/2-1}(\parOmega)} \leq C$. Let  $j\in \{ 5/2, 3, 7/2, 4,\cdots, 3l/2 \}$ and $k\in \{ 3/2, 5/2, 3, 7/2, 4 ,\cdots, 3l/2 \}$. 
Then, for all smooth vector fields $F$,  it holds
\begin{align}
&\|F\|_{H^k(\Omega)} \leq C \Big(\|F_n \|_{H^{k-1/2}(\parOmega)}+\|F\|_{L^2(\Omega)}+\|\dive  F\|_{H^{k-1}(\Omega)}+\|\curl F\|_{H^{k-1}(\Omega)} \Big),\label{e:div-curl 1}\\
&\|F\|_{H^j(\Omega)} \leq  C\Big(  \|\laplace_{\parOmega} F_n \|_{H^{j-5/2}(\parOmega)}+\|F\|_{L^2(\Omega)}+\|\dive F\|_{H^{j-1}(\Omega)}+\|\curl F\|_{H^{j-1}(\Omega)} \Big) ,\label{e:div-curl 2}\\
&\|F\|_{H^{\lfloor(3(l+1))/2\rfloor}(\Omega)}
\leq  C\Big(\|\surflaplace F_n \|_{H^{\lfloor(3l-2)/2\rfloor}(\parOmega)}+ (1+\|B\|_{H^{3l/2}(\parOmega)})\|F\|_{L^{\infty}(\Omega)}\nonumber\\
&\qquad\qquad\qquad\qquad\qquad+\|\dive F\|_{H^{\lfloor(3l+1)/2\rfloor-1}(\Omega)}+\|\curl F\|_{H^{\lfloor(3l+1)/2\rfloor-1}(\Omega)}\Big).\label{e:div-curl 3}
\end{align} 
  
\end{lemma} 
\begin{proposition}\label{p:ene est 1}
Assume that $\parOmega_t\in H^{3+\delta}(\Gamma)$ with $\delta>0$ small enough. Assume that  $\|p\|_{H^{3}(\Omega_t)}+\|v\|_{H^{4}(\Omega_t)}+\|H\|_{H^{4}(\Omega_t)} \le C_0$.
Then we have $\barE(t)+\|B\|_{H^{9/2}(\parOmega_t)}^2\le C(1+\barenergy(t))$,
where the constant $C$ depends on $\mathcal{M}_t,\|h(\cdot,t)\|_{H^{3+\delta}(\Gamma)},\|p\|_{H^{3}(\Omega_t)},\|v\|_{H^{4}(\Omega_t)}$, and $\|H\|_{H^{4}(\Omega_t)}$.
\end{proposition} 
\begin{proof}
We shall show that $\barE(t)\le C\tilde{\energy}(t)$. We need to control $\|\matder^{4-k}v\|_{H^{3k/2}(\Omega_t)}^2,\|\matder^{4-k}H\|_{H^{3k/2}(\Omega_t)}^2, k\le 3,\|  v\|_{H^6(\Omega_t)}^2$, and $ \| H\|_{H^6(\Omega_t)}^2$.   Recalling \eqref{e:claim},  
it is sufficient to control $\|\matder^{3}v\|_{H^{3/2}(\Omega_t)}^2$, $\|\matder^{2}v\|_{H^{3}(\Omega_t)}^2$, $\|\matder v\|_{H^{9/2}(\Omega_t)}^2$, $\|v\|_{H^6(\Omega_t)}^2$, and $\| H\|_{H^6(\Omega_t)}^2$. 
We divide the proof into three steps.

\textbf{Step 1.} We control $\|\matder^{3}v\|_{H^{3/2}(\Omega_t)}^2$. Recalling that  $\|\tilde\nu\|_{H^{5/2+\delta}(\Omega_t)}\le C$, we have
\begin{align*}
&\|\matder^3v\cdot \nu\|_{L^2(\parOmega_t)}^2\\ 
\le{}&|\int_{\Omega_t} (\matder^3v\cdot \nu)\dive\matder^3vdx|+|\int_{\Omega_t} \nabla\matder^3v\star\matder^3vdx|+|\int_{\Omega_t} \matder^3v\star\nabla\nu\star\matder^3vdx|\\
\le{}& C(\|\matder^3v\|_{L^{2}(\Omega_t)}^2+\|\dive\matder^3v\|_{L^{2}(\Omega_t)}^2 +\|\nabla\matder^3v\|_{L^{2}(\Omega_t)}\|\matder^3v\|_{L^{2}(\Omega_t)})\\
\le{}& \vare\|\nabla\matder^3v\|_{L^{2}(\Omega_t)}^2+C_\vare\tilde{\energy}(t)+C\|\dive\matder^3v\|_{L^{2}(\Omega_t)}^2.
\end{align*}
This, combined with Lemmas \ref{l:formu 5} and \ref{l:err est 1}, and \eqref{e:div-curl 1}, it follows that $\|\matder^{3}v\|_{H^{3/2}(\Omega_t)}^2\le C(\|\matder^{3}v\cdot\nu\|_{H^{1}(\parOmega_t)}^2+\|\matder^{3}v\|_{L^{2}(\Omega_t)}^2+\|\dive \matder^{3}v\|_{H^{1/2}(\Omega_t)}^2+\|\curl\matder^{3}v\|_{H^{1/2}(\Omega_t)}^2)$, 
and therefore,
\begin{align*}
\|\matder^{3}v\|_{H^{3/2}(\Omega_t)}^2 \le C(&\tilde{\energy}(t)+\|R^2_{\Rdiv}\|_{H^{1/2}(\Omega_t)}^2+\|R^2_{\nabla H,\nabla H}\|_{H^{1/2}(\Omega_t)}^2+\|R^2_{\nabla^2H,H}\|_{H^{1/2}(\Omega_t)}^2).
\end{align*}
To control $\|R^2_{\nabla H,\nabla H}\|_{H^{1/2}(\Omega_t)}^2$, we estimate as follows. Indeed, by the assumption, applying  Young's inequality and Lemma \ref{l:Kato Ponce}, we  obtain $\|\nabla^2\matder v\star\nabla H\star H\|_{H^{1/2}(\Omega_t)}^2+\|\nabla\matder v\star\nabla^2 H\star H\|_{H^{1/2}(\Omega_t)}^2 \le  C\|\matder v\|_{H^{3}(\Omega_t)}^2\|H\|_{H^{3}(\Omega_t)}^4$ and 
\begin{equation*}
\|\matder v\|_{H^{3}(\Omega_t)}^2\|H\|_{H^{3}(\Omega_t)}^4\le \vare \barE(t)+C_\vare\|p\|_{H^{1}(\Omega_t)}^2+C_\vare\|H\cdot\nabla H\|_{L^{2}(\Omega_t)}^2\le \vare \barE(t)+C_\vare.
\end{equation*}  
As for $\|R^2_{\nabla^2H,H}\|_{H^{1/2}(\Omega_t)}^2$, we recall Lemma \ref{l:nab^2H,H}, and we handle the most difficult term, i.e., $\|\nabla^3\curl H\star H\star H\star H\|_{H^{1/2}(\Omega_t)}^2 
\le C\|\curl H\|_{H^{4}(\Omega_t)}^2\le C \tilde{\energy}(t)$. 
Again by the Young's inequality and  Lemma \ref{l:Kato Ponce}, we can control $\|R^2_{\Rdiv}\|_{H^{1/2}(\Omega_t)}^2$. In fact, we have
\begin{align*}
&\|\nabla\matder^2 v\star \nabla v\|_{H^{1/2}(\Omega_t)}^2+\|\nabla\matder v\star \nabla\matder v\|_{H^{1/2}(\Omega_t)}^2\\
\le{}& C\|v\|_{H^{2}(\Omega_t)}^2\|\matder^2 v\|_{H^{2}(\Omega_t)}^2+C\|\nabla\matder v\|_{L^{3}(\Omega_t)}^2\|\nabla\matder v\|_{H^{3/2}(\Omega_t)}^2\\
\le{}
&(\vare\|\matder v\|_{H^{9/2}(\Omega_t)}^2+C_\vare\|\matder v\|_{L^{2}(\Omega_t)}^2)(\|\nabla^2p\|_{L^{3}(\Omega_t)}^2+\|\nabla(H\cdot\nabla H)\|_{L^{3}(\Omega_t)}^2)\\
&+\vare\|\matder^2 v\|_{H^{3}(\Omega_t)}^2+C_\vare\|\matder^2 v\|_{L^{2}(\Omega_t)}^2\le C_\vare \tilde{\energy}(t)+\vare \barE(t).
\end{align*}
Combining the above estimates, we obtain $\|\matder^{3}v\|_{H^{3/2}(\Omega_t)}^2
\le \vare \barE(t)+ C_\vare\tilde{\energy}(t)$.

\textbf{Step 2.} We estimate $\|\matder^{2}v\|_{H^{3}(\Omega_t)}^2$ and $\|\matder v\|_{H^{9/2}(\Omega_t)}^2$. Applying Lemma \ref{l:formu 5} and \eqref{e:div-curl 2}, it holds
\begin{align*}
\|\matder v\|_{H^{9/2}(\Omega_t)}^2 
\le{}& C\tilde{\energy}(t)+C( \|\surflaplace(\matder v\cdot \nu)\|_{H^{2}(\parOmega_t)}^2+\|\nabla v\star\nabla v\|_{H^{7/2}(\Omega_t)}^2\\
&\quad \quad \quad \quad \ \ +\|\nabla H\star \nabla H\|_{H^{7/2}(\Omega_t)}^2+\|H\star \nabla\curl H\|_{H^{7/2}(\Omega_t)}^2)\\
\le{}& C\tilde{\energy}(t)+C \|\surflaplace(\matder v\cdot \nu)\|_{H^{2}(\parOmega_t)}^2,\\
\|\matder^{2}v\|_{H^{3}(\Omega_t)}^2 
\le{}& C( \|\surflaplace(\matder^{2}v\cdot \nu)\|_{H^{{1}/{2}}(\parOmega_t)}^2+\|R^1_{\Rdiv}\|_{H^{2}(\Omega_t)}^2+\|R^1_{\nabla H,\nabla H}\|_{H^{2}(\Omega_t)}^2\\
&\quad +\|R^1_{\nabla^2 H,H}\|_{H^{2}(\Omega_t)}^2)+C\tilde{\energy}(t).
\end{align*}
We control $\|R^1_{\Rdiv}\|_{H^{2}(\Omega_t)}^2$ by the bilinear inequality,  $\|\nabla\matder v\star \nabla v\|_{H^{2}(\Omega_t)}^2\le C \|\matder v\|_{H^{3}(\Omega_t)}^2\|v\|_{H^{3}(\Omega_t)}^2\le \vare \barE(t)+ C_\vare\tilde{\energy}(t)$.
For $\|R^1_{\nabla H,\nabla H}\|_{H^{2}(\Omega_t)}^2$, it holds that $\|\nabla^2 v\star \nabla H\star H\|_{H^{2}(\Omega_t)}^2+\|\nabla v\star \nabla H\star\nabla H\|_{H^{2}(\Omega_t)}^2\le C\|v\|_{H^{4}(\Omega_t)}^2\|H\|_{H^{3}(\Omega_t)}^4$
from the assumption. 
Then, the estimate for $\|R^1_{\nabla^2 H,H}\|_{H^{2}(\Omega_t)}^2$  follows since $\|\nabla^2\curl v\star H\star H\|_{H^{2}(\Omega_t)}^2\le C\tilde{\energy}(t)$. 

We are left with $\|\surflaplace(\matder^{2}v\cdot \nu)\|_{H^{1/2}(\parOmega_t)}^2$ and $\|\surflaplace(\matder v\cdot \nu)\|_{H^{2}(\parOmega_t)}^2$. We focus on the estimate of $\|\surflaplace(\matder^{2}v\cdot \nu)\|_{H^{1/2}(\parOmega_t)}^2$. 
Recalling that from Lemma \ref{l:Dt^lp}, we have $\matder^3p=-\surflaplace(\matder^{2}v\cdot \nu)+R^2_p$. Since $\|R^2_p\|_{H^{1/2}(\parOmega_t)}^2$ is easier to control than $\|\matder^3 p\|_{H^{1/2}(\parOmega_t)}^2$, we only bound $\|\matder^3 p\|_{H^{1/2}(\parOmega_t)}^2$. By the definition of $H^{1/2}(\parOmega)$, it holds $\|\matder^3 p\|_{H^{1/2}(\parOmega_t)}^2\le C\|\matder^3 p\|_{L^{2}(\parOmega_t)}^2+C\|\nabla\matder^3 p\|_{L^{2}(\Omega_t)}^2$.
Applying (3) in Lemma \ref{l:formu 4}, for the first term, we have
\begin{equation*}
\|\matder^3 p\|_{L^{2}(\parOmega_t)}^2 
\le C \|\sum_{1\le m\le 3}\sum_{\substack{|\beta|\le 3-m,|\alpha|\le 1}}a_{\alpha,\beta}(\nu,B)\tangrad^{1+\alpha_1}\matder^{\beta_1}v\star\cdots\star \tangrad^{1+\alpha_m}\matder^{\beta_m}v \|_{L^{2}(\parOmega_t)}^2.
\end{equation*}
For $m=1$, from $\|B\|_{L^{\infty}(\parOmega_t)}\le C$, we control $a(\nu,B)\tangrad^{2}\matder^{2}v$ by the trace theorem and interpolation: $\|a(\nu,B)\tangrad^{2}\matder^{2}v\|_{L^{2}(\parOmega_t)}^2\le C \|\matder^{2}v\|_{H^{5/2}(\Omega_t)}^2\le \vare \barE(t)+C_\vare \tilde{\energy}(t)$.
The other cases are either simpler or similar. As for $\|\nabla\matder^3 p\|_{L^{2}(\Omega_t)}^2$, it follows that $\|\nabla\matder^3 p\|_{L^{2}(\Omega_t)}^2 \le C\tilde{\energy}(t)+C\|\matder^3(H\cdot\nabla H)\|_{L^{2}(\Omega_t)}^2+C\|[\nabla,\matder^3] p\|_{L^{2}(\Omega_t)}^2$. 
To control $\|\matder^3(H\cdot\nabla H)\|_{L^{2}(\Omega_t)}^2$, again by interpolation, we see that $\|\nabla^2\matder^2v\star H\star H\|_{L^{2}(\Omega_t)}^2+\|\nabla^2\matder v\star H\star H\|_{L^{2}(\Omega_t)}^2  
\le \vare \barE(t)+C_\vare \tilde{\energy}(t)$, 
and we estimate $\|[\nabla,\matder^3] p\|_{L^{2}(\Omega_t)}^2$ as follows
\begin{align*}
&\|\nabla\matder^2 v\star\nabla p\|_{L^{2}(\Omega_t)}^2+\|\nabla\matder  v\star\nabla\matder p\|_{L^{2}(\Omega_t)}^2+\|\nabla v\star\nabla\matder^2 p\|_{L^{2}(\Omega_t)}^2\\
\le{}& C(\|\matder^2 v\|_{H^{2}(\Omega_t)}^2\|p\|_{H^{3/2}(\Omega_t)}^2+\|\nabla(H\cdot\nabla H)\star\nabla\matder p\|_{L^{2}(\Omega_t)}^2\\
&\quad +\|\nabla^2p\star\nabla\matder p\|_{L^{2}(\Omega_t)}^2+\|\nabla\matder^2 p\|_{L^{2}(\Omega_t)}^2)\\ 
\le{}& C\|\matder^2 v\|_{H^{2}(\Omega_t)}^2+C\|\nabla\matder p\|_{L^3(\Omega_t)}^2+C\|\nabla\matder^2 p\|_{L^{2}(\Omega_t)}^2.
\end{align*}
Note that   $\|\nabla\matder^2 p\|_{L^{2}(\Omega_t)}^2$ and $\|\nabla\matder p\|_{L^3(\Omega_t)}^2$ have fewer material derivatives than $\|\nabla\matder^3 p\|_{L^{2}(\Omega_t)}^2$. Therefore, it can be estimated as $I_8(t)$ in the same fashion, and we can obtain $\|\matder^3 p\|_{H^{1/2}(\parOmega_t)}^2\le C\tilde{\energy}(t)+\vare \barE(t)$.
Similarly, it holds $\|\matder^2 p\|_{H^{2}(\parOmega_t)}^2\le C\tilde{\energy}(t)+\vare \barE(t)$.
Combining the above estimates, we conclude that
$\|\matder^{2}v\|_{H^{3}(\Omega_t)}^2+\|\matder v\|_{H^{9/2}(\Omega_t)}^2\le C\tilde{\energy}(t)+\vare \barE(t)$.  

\textbf{Step 3.} Finally, we bound $\|  v\|_{H^6(\Omega_t)}^2$ and $\| H\|_{H^6(\Omega_t)}^2$. From  \eqref{e:div-curl 3}, we see that $\|v\|_{H^6(\Omega_t)}^2\le C(\tilde{\energy}(t)+\|\surflaplace  v_n\|_{H^{7/2}(\parOmega_t)}^2+\|B\|_{H^{9/2}(\parOmega_t)}^2)$ and $\|H\|_{H^6(\Omega_t)}^2 \le C(\tilde{\energy}(t)+\|B\|_{H^{9/2}(\parOmega_t)}^2)$. 
Recalling Lemma \ref{l:B by p} and by the trace theorem, it follows that $\|B\|_{H^{9/2}(\parOmega_t)}^2\le  C(1+\|p\|_{H^{9/2}(\parOmega_t)}^2)\le \vare\barE(t)+\|H\|_{H^{5}(\Omega_t)}^2+C_\vare$. 

Again by \eqref{e:div-curl 3}, we can estimate in $H^5(\Omega_t)$ and deduce $\|H\|_{H^5(\Omega_t)}^2\le C\tilde{\energy}(t)+\|B\|_{H^{7/2}(\parOmega_t)}^2$. 
Similarly, it holds $\|B\|_{H^{7/2}(\parOmega_t)}^2 \le \vare \barE(t)+\|H\|_{H^4(\Omega_t)}^2+C_\vare\le \vare \barE(t)+C_\vare$.
Thus, $\|B\|_{H^{9/2}(\parOmega_t)}^2\le \vare \barE(t)+C_\vare,\|p\|_{H^{9/2}(\parOmega_t)}^2\le \vare \barE(t)+C_\vare$,
and $\|H\|_{H^6(\Omega_t)}^2\le \vare \barE(t)+C_\vare$. 

We are left with the term $\|\surflaplace  v_n\|_{H^{7/2}(\parOmega_t)}^2$. From \eqref{e:Dtp} and by the above calculations, it follows that
\begin{align*}
\|\surflaplace  v_n\|_{H^{7/2}(\parOmega_t)}^2&\le C\|\matder p\|_{H^{7/2}(\parOmega_t)}^2+C\||B|^2v_n\|_{H^{7/2}(\parOmega_t)}^2+C\|\tangrad p\cdot v\|_{H^{7/2}(\parOmega_t)}^2\\ 
&\le C\|v\|_{H^{4}(\Omega_t)}^2\|B\|_{L^{\infty}(\parOmega_t)}^2\|B\|_{H^{7/2}(\parOmega_t)}^2+\frac{\vare}{2} \barE(t)+C_\vare\tilde{\energy}(t)\le C_\vare\tilde{\energy}(t)+\vare \barE(t),
\end{align*}
where we have used the fact that
\begin{align*}
\|\matder p\|_{H^{7/2}(\parOmega_t)}^2 
\le{}& C(\|\matder p\|_{L^{2}(\parOmega_t)}^2+\|\nabla\matder p\|_{H^{3}(\Omega_t)}^2)\\
\le{}& C(1+\|\matder^2v\|_{H^{3}(\Omega_t)}^2+\|\matder(H\cdot\nabla H)\|_{H^{3}(\Omega_t)}^2 +\|\nabla v\star (H\cdot\nabla H-\matder v)\|_{H^{3}(\Omega_t)}^2)\\ 
\le{}&  C_\vare\tilde{\energy}(t)+\frac{\vare}{2} \barE(t),
\end{align*}
and interpolation arguments since $\|\matder H\|_{H^{4}(\Omega_t)}^2$ and  $\|\matder^2v\|_{H^{3}(\Omega_t)}^2$ have already been controlled. This completes the proof.
\end{proof}
 
\begin{proposition}\label{p:ene est 2}
Let $l\ge 4$. Assume that \eqref{e:a priori} holds for some $T>0$ and $\sup_{0\le t< T}E_{l-1}(t)\le C$. Then, we have $E_l(t)\le C(1+\energy_l(t))$,
where the constant $C$ depends on $l,T,\mathcal{N}_T,\mathcal{M}_T $ and $\sup_{0\le t< T}E_{l-1}(t)$.
\end{proposition}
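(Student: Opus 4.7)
The plan is to extend the three-step scheme used in Proposition \ref{p:ene est 1} to the higher-order setting, proving the equivalent assertion $E_l(t) \le C(1+\tilde{\energy}_l(t))$ and then invoking the interpolation $\tilde{\energy}_l(t) \le C(1+\energy_l(t))$, which holds because the inductive hypothesis $\sup_{0\le t<T}E_{l-1}(t)\le C$ allows us to bound the intermediate-order terms in $\tilde{\energy}_l$ not already contained in $\energy_l$. From the assumption, all quantities at ``level $l-1$'' are available as constants; the task is to promote each summand of $E_l(t)$ one level higher.

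First, using the identities \eqref{e:Dt^k H} and \eqref{e:nabDt^k H}, I would reduce every magnetic quantity $\|\matder^{l+1-k}H\|_{H^{3k/2}(\Omega_t)}^2$ to a sum of products of spatial derivatives of $H$ and material derivatives of $v$ of one order less, all of which are either controlled by $E_{l-1}(t)$ or by the velocity estimates developed in the next step, exactly as in \eqref{e:claim}. Second, for each $1\le k\le l$, I would apply the div-curl estimate of Lemma \ref{l:3.2 jul} to bound $\|\matder^{l+1-k}v\|_{H^{3k/2}(\Omega_t)}^2$ by
\begin{equation*}
\|\matder^{l+1-k}v\|_{L^2(\Omega_t)}^2+\|\dive \matder^{l+1-k}v\|_{H^{3k/2-1}(\Omega_t)}^2+\|\curl \matder^{l+1-k}v\|_{H^{3k/2-1}(\Omega_t)}^2+\|\surflaplace(\matder^{l-k}v\cdot \nu)\|_{H^{3k/2-2}(\parOmega_t)}^2.
\end{equation*}
By Lemma \ref{l:formu 5}, the divergence and curl expressions equal $R^{l-k}_{\Rdiv}$ and $R^{l-k}_{\Rdiv}+R^{l-k}_{\nabla H,\nabla H}+R^{l-k}_{\nabla^2 H,H}$, respectively, so the error estimates \eqref{e:err est 1-2}--\eqref{e:err est 2-5} of Lemmas \ref{l:err est 1}--\ref{l:err est 2} furnish bounds of the form $\vare E_l(t)+C_\vare$ for $k<l$ and $CE_l(t)$ plus a $\|\curl H\|_{H^{\lfloor(3l+1)/2\rfloor}(\Omega_t)}^2$ contribution (absorbed by $\tilde{\energy}_l(t)$) for $k=l$. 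The boundary term is converted into a pressure term via Lemma \ref{l:Dt^lp}, since $\surflaplace(\matder^{l-k}v\cdot\nu)=-\matder^{l-k+1}p+R^{l-k}_p$; the remainder $R^{l-k}_p$ is controlled by Lemma \ref{lem-err-est-3}, and the pressure is recovered by elliptic estimates applied to the equation for $\matder^{l-k+1}p$ provided by Lemma \ref{l:lapDtp}, using the inductive hypothesis to absorb all subleading contributions.

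Third, I would treat the top-order spatial regularities $\|v\|_{H^{\lfloor(3l+3)/2\rfloor}(\Omega_t)}^2$ and $\|H\|_{H^{\lfloor(3l+3)/2\rfloor}(\Omega_t)}^2$ by the same div-curl strategy used in Step 3 of Proposition \ref{p:ene est 1}. This reduces matters to estimating $\|\surflaplace v_n\|_{H^{\lfloor(3l+3)/2\rfloor-5/2}(\parOmega_t)}^2$ and $\|B\|_{H^{\lfloor(3l+3)/2\rfloor-3/2}(\parOmega_t)}^2$ on the free boundary. The former is expressed through $\matder p$ via \eqref{e:Dtp} and then through $\matder^2v$ and $\matder(H\cdot\nabla H)$ by the equation, which have already been controlled. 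The latter is traded for pressure regularity on $\parOmega_t$ via Lemma \ref{l:B by p}; the needed pressure bound then reduces to $\|\nabla p\|_{H^{\lfloor(3l+1)/2\rfloor}(\Omega_t)}^2$, which by \eqref{e:mhd} is bounded by $\|\matder v\|_{H^{\lfloor(3l+1)/2\rfloor}(\Omega_t)}^2 + \|H\cdot\nabla H\|_{H^{\lfloor(3l+1)/2\rfloor}(\Omega_t)}^2$. The $\matder v$ term is supplied by Step 2 applied with $k = l$ (absorbing an $\vare$-portion of $E_l$ on the left), while the magnetic quadratic is handled by Kato-Ponce (Lemma \ref{l:Kato Ponce}) together with the inductive hypothesis and Lemma \ref{l:B by p}.

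The main obstacle, as in Proposition \ref{p:ene est 1}, is the magnetic error $R^{l-k}_{\nabla^2H,H}$ whose leading contribution $\nabla^{l-k+1}\curl H \star H\star\dots\star H$ (or $\nabla^{l-k+1}\curl v\star H\star\dots\star H$ in the odd parity case, by Lemma \ref{l:nab^2H,H}) has one extra derivative compared with the other pieces; this is precisely where it becomes essential that $\tilde{\energy}_l(t)$ carries $\|\curl v\|_{H^{\lfloor(3l+1)/2\rfloor}(\Omega_t)}^2$ and $\|\curl H\|_{H^{\lfloor(3l+1)/2\rfloor}(\Omega_t)}^2$, since the reduction ``$\matder$ paired with $\curl$ costs only $\frac12$ spatial derivative'' from Lemma \ref{l:curl Dt} allows us to close the top order. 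A second delicate point will be the bookkeeping that ensures all absorbed constants $C_\vare$ depend only on $\sup_{0\le t<T}E_{l-1}(t)$ and not on $E_l$ itself, which requires choosing $\vare$ small enough at each application of interpolation, and using $E_{l-1}(t)\le C$ to dominate any term in the Kato-Ponce splitting whose $L^\infty$ or $W^{1/2,6}$ factor lies below the $E_{l-1}$-threshold.
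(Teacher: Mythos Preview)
Your plan is essentially the paper's own three-step scheme, and the overall architecture (div-curl plus conversion of the boundary Laplace--Beltrami term to pressure via Lemma~\ref{l:Dt^lp}, then back to velocity through the equation) is correct. Two points of precision are worth flagging.

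First, Lemma~\ref{l:3.2 jul} only applies for the exponent $\tfrac{3k}{2}\ge \tfrac52$, so the $k=1$ case ($\|\matder^{l}v\|_{H^{3/2}}$) must be handled separately by Lemma~\ref{l:3.1 jul}, which produces the boundary term $\|\matder^{l}v\cdot\nu\|_{H^{1}(\parOmega_t)}$ rather than a $\surflaplace$-term. The $\tangrad$-part of this $H^1$ norm is already in $\tilde{\energy}_l$, but the $L^2(\parOmega_t)$-part is not and must be extracted by the divergence-theorem trick (as in Step~2 of the paper's proof and in Proposition~\ref{p:ene est 1}). There are also two index slips in your display: the vector field is $\matder^{l+1-k}v$, not $\matder^{l-k}v$, and the boundary exponent is $\tfrac{3k}{2}-\tfrac52$, not $\tfrac{3k}{2}-2$; accordingly the pressure term produced by Lemma~\ref{l:Dt^lp} is $\matder^{l+2-k}p$ with remainder $R^{l+1-k}_p$.

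Second, the pressure contribution $\|\matder^{l+2-k}p\|_{H^{(3k-5)/2}(\parOmega_t)}$ is \emph{not} absorbed by the inductive hypothesis $E_{l-1}(t)\le C$ alone; applying elliptic estimates directly via Lemma~\ref{l:lapDtp} is circular because the Dirichlet data reappears at the same order. The paper instead splits via \eqref{e:har ext 1}--\eqref{e:har ext 2} into a lower-order boundary piece (handled through $\matder^{l+2-k}\meancurv$ and Lemma~\ref{l:formu 4}, giving $\vare E_l + C_\vare$ after interpolation) and the interior gradient $\|\nabla\matder^{l+2-k}p\|_{H^{(3k-6)/2}(\Omega_t)}$, which, by the MHD equation $-\nabla p=\matder v - H\cdot\nabla H$ and the commutator \eqref{e:[Dt^l,nab]p}, yields the \emph{recursive} bound $\|\matder^{l+1-k}v\|_{H^{3k/2}}^2\le C\|\matder^{l+3-k}v\|_{H^{3(k-2)/2}}^2+\vare E_l+C_\vare\tilde{\energy}_l$. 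This recursion descends in steps of two in $k$, terminating at $k=0$ (contained in $\tilde{\energy}_l$) or $k=1$ (Step~2). Once you incorporate these two refinements, your argument coincides with the paper's.
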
 
\begin{proof}
We will show that $E_l(t)\le C \tilde{\energy}_l(t)$ and we divide the proof into three steps.

\textbf{Step 1.} We claim that it is sufficient to bound $\|\matder^{l+1-k}v\|_{H^{3k/2}(\Omega_t)}^2,k\in\{1,2,\cdots,l\},\| v\|_{H^{\lfloor3(l+1)/2\rfloor}(\Omega_t)}^2$ and $\| H\|_{H^{\lfloor3(l+1)/2\rfloor}(\Omega_t)}^2$.  Indeed,  
$\|\matder^{l+1-k}H\|_{H^{3k/2}(\Omega_t)}^2$ can be controlled by these quantities. Starting with the case of $2\le k\le l-1$, from the hypothesis, \eqref{e:Dt^k H} and \eqref{e:nabDt^k H}, we have  
\begin{equation*}
\|\matder^{l+1-k}H\|_{H^{3k/2}(\Omega_t)}^2
\le C\sum_{\substack{1\le m\le l+1-k\\|\beta|\le l+1-k-m}}\|\nabla\matder^{\beta_1}v\|_{H^{3k/2}(\Omega_t)}^2\cdots\| \nabla\matder^{\beta_{m}}v\|_{H^{3k/2}(\Omega_t)}^2\|H\|_{H^{3k/2}(\Omega_t)}^2.
\end{equation*} 
If $m=1$, we see that $\|\matder^{l+1-k}H\|_{H^{3k/2}(\Omega_t)}^2 
\le C(\|\matder^{l+1-(k+1)}v\|_{H^{3 (k+1)/2}(\Omega_t)}^2+1)$,
since $\|H\|_{H^{3k/2}(\Omega_t)}^2\le CE_{l-1}(t)\le C$. For $m\ge 2$, it holds $\|\matder^{l+1-k}H\|_{H^{3k/2}(\Omega_t)}^2\le CE_{l-1}(t)\cdots E_{l-1}(t)\le C$.

Next, we deal with the case of $k=1$, and it follows that
\begin{align*} 
\|\matder^l H\|_{H^{3/2}(\Omega_t)}^2  
\le{}& C\sum_{\beta_1\le l-1}\|\nabla\matder^{\beta_1}v\|_{H^{3/2 }(\Omega_t)}^2\| H\|_{H^{2 }(\Omega_t)}^2\\
&+
C\sum_{\substack{2\le m\le l,|\beta|\le l-m}}\|\nabla\matder^{\beta_1}v\|_{H^{2 }(\Omega_t)}^2\cdots\| \nabla\matder^{\beta_{m}}v\|_{H^{2 }(\Omega_t)}^2\| H\|_{H^{2 }(\Omega_t)}^2\\
\le{}& C(\|\nabla\matder^{l-1}v\|_{H^{3/2 }(\Omega_t)}^2+1)\le C(\|\matder^{l+1-2}v\|_{H^3(\Omega_t)}^2+1).
\end{align*}

Finally, for an even integer $k=l$, one has $\|\matder H\|_{H^{3l/2}(\Omega_t)}^2\le C\|H\|_{H^{\lfloor3l/2\rfloor}(\Omega_t)}^2\|v\|_{H^{\lfloor3l/2+1\rfloor}(\Omega_t)}^2\le C\|v\|_{H^{\lfloor3l/2+1\rfloor}(\Omega_t)}^2$ from $E_{l-1}(t)\le C$, 
and  if $k=l$ is odd, we have by Lemma \ref{l:2.10 jul} that
\begin{align*}
\|\matder H\|_{H^{3l/2}(\Omega_t)}^2&\le C(\|H\|_{L^{\infty}(\Omega_t)}^2\|v\|_{H^{3l/2+1}(\Omega_t)}^2+\|H\|_{H^{3l/2}(\Omega_t)}^2\|v\|_{L^{\infty}(\Omega_t)}^2)\\
&\le C\|v\|_{H^{\lfloor3(l+1)/2\rfloor}(\Omega_t)}^2+C\|H\|_{H^{\lfloor3(l+1)/2\rfloor}(\Omega_t)}^2.
\end{align*}  

\textbf{Step 2.} We claim that $\|\matder^{l}v\|_{H^{3/2}(\Omega_t)}^2\le \vare E_l(t)+C\tilde{\energy}_l(t)$.  Due to the fact that $\|\nu\|_{H^{5/2+\delta}(\Omega_t)}\le C$ and the assumption $E_{l-1}(t)\le C$, we have
\begin{align*}
\|\matder^{l}v\cdot \nu\|_{L^2(\parOmega_t)}^2 
\le{}&|\int_{\Omega_t} (\matder^{l}v\cdot \nu)\dive\matder^{l}vdx|+|\int_{\Omega_t} \nabla\matder^{l}v\star\matder^{l}vdx|+|\int_{\Omega_t} \matder^{l}v\star\nabla\nu\star\matder^{l}vdx|\\
\le{}& C(\|\matder^{l}v\|_{L^{2}(\Omega_t)}^2+\|\dive\matder^{l}v\|_{L^{2}(\Omega_t)}^2+\|\nabla\matder^{l}v\|_{L^{2}(\Omega_t)}\|\matder^{l}v\|_{L^{2}(\Omega_t)})\\ 
\le{}& \vare\|\nabla\matder^{l}v\|_{L^{2}(\Omega_t)}^2+ C(1+\|\dive\matder^{l}v\|_{L^{2}(\Omega_t)}^2).
\end{align*}
This, combined with  \eqref{e:div-curl 1}, we see that  
\begin{align*}
\|\matder^{l}v\|_{H^{3/2}(\Omega_t)}^2\le{}& C(\|\matder^{l}v\cdot\nu\|_{H^{1}(\parOmega_t)}^2+\|\matder^{l}v\|_{L^{2}(\Omega_t)}^2+\|\dive\matder^{l}v\|_{H^{1/2}(\Omega_t)}^2+\|\curl\matder^{l}v\|_{H^{1/2}(\Omega_t)}^2)\\ \le{}&C(\vare \|\matder^{l}v\|_{H^{1}(\Omega_t)}^2+1+  E_{l-1}(t)+\|\tangrad(\matder^{l}v\cdot\nu)\|_{L^{2}(\parOmega_t)}^2\\
&\quad +\|\dive\matder^{l}v\|_{H^{1/2}(\Omega_t)}^2+\|\curl\matder^{l}v\|_{H^{1/2}(\Omega_t)}^2).
\end{align*} 
Then, it follows that $\|\matder^{l}v\|_{H^{3/2}(\Omega_t)}^2\le C(\tilde{e}_l(t)+\|\dive\matder^{l}v\|_{H^{1/2}(\Omega_t)}^2+\|\curl\matder^{l}v\|_{H^{1/2}(\Omega_t)}^2)$.
Applying Lemmas \ref{l:formu 5}, \ref{l:err est 1} and \ref{l:err est 2}, we arrive at
\begin{align*}
&\|\dive\matder^{l}v\|_{H^{1/2}(\Omega_t)}^2+\|\curl\matder^{l}v\|_{H^{1/2}(\Omega_t)}^2\\\le{}& C(\|R^{l-1}_{\Rdiv}\|_{H^{1/2}(\Omega_t)}^2+\|R^{l-1}_{\nabla H,\nabla H}\|_{H^{1/2}(\Omega_t)}^2+\|R^{l-1}_{\nabla^2 H, H}\|_{H^{1/2}(\Omega_t)}^2)\le \vare E_l(t)+C_\vare,
\end{align*}
where $\vare>0$ is sufficiently small. This concludes the claim.

\textbf{Step 3.} We claim that for $2 \le k \leq l$, it holds
\begin{equation}\label{e:ene est 2}
\|\matder^{l+1-k} v\|_{H^{3k/2}(\Omega_t)}^2 \leq C  \| \matder^{l+3 -k} v \|_{H^{3k/2-3}(\Omega_t)}^2+ \vare E_l(t) + C_\vare \tilde{\energy}_l(t).
\end{equation}
Once we have these estimates, it follows that $\|\matder^{l-1} v\|_{H^{3}(\Omega_t)}^2\le  \vare E_l(t)+C_\vare \tilde{\energy}_l$. This, combined with Step 2, will control $  \|\matder^{l+1-k}v\|_{H^{3k/2}(\Omega_t)}^2$ for any $3\le k\le l$. To prove \eqref{e:ene est 2}, from Lemmas  \ref{l:formu 5}, \ref{l:err est 1} and \ref{l:err est 2}, and \eqref{e:div-curl 2},  it holds
\begin{align*}
\|\matder^{l+1-k}v\|_{H^{3k/2}(\Omega_t)}^2 \le{}& C(\|\surflaplace(\matder^{l+1-k}v\cdot \nu)\|_{H^{(3k-5)/2}(\parOmega_t)}^2+\|R^{l-k}_{\Rdiv}\|_{H^{(3k-2)/2}(\Omega_t)}^2\\
&\quad +\|R^{l-k}_{\nabla H,\nabla H}\|_{H^{(3k-2)/2}(\Omega_t)}^2+\|R^{l-k}_{\nabla^2 H,H}\|_{H^{(3k-2)/2}(\Omega_t)}^2+E_{l-1}(t))\\
\le{}& C \|\surflaplace(\matder^{l+1-k}v\cdot \nu)\|_{H^{(3k-2)/2}(\parOmega_t)}^2+\vare E_l(t)+C_\vare.
\end{align*}
Lemmas \ref{l:Dt^lp} and \ref{lem-err-est-3} give $\matder^{l+2-k} p= -\surflaplace (\matder^{l+1-k} v\cdot\nu)+R^{l+1-k}_p$, and  $\|R^{l+1-k}_p\|_{H^{(3k-5)/2}(\parOmega_t)}^2\le\vare E_l(t)+C_\vare$. Then, we obtain $\|\matder^{l+1-k}v\|_{H^{3k/2}(\Omega_t)}^2\le C\|\matder^{l+2-k} p\|_{H^{(3k-5)/2}(\parOmega_t)}^2+\vare E_l(t)+C_\vare$. 
By \eqref{e:har ext 2}, we see that $\|\matder^{l+2-k} p\|_{H^{(3k-5)/2}(\parOmega_t)}^2 \le\|\matder^{l+2-k} p\|_{H^{(3k-6)/2}(\parOmega_t)}^2+\|\nabla\matder^{l+2-k} p\|_{H^{(3k-6)/2}(\Omega_t)}^2$.

The first term can be controlled by Lemma \ref{l:formu 4} as in Proposition \ref{p:ene est 1}, i.e., $\|\matder^{l+2-k} p\|_{H^{(3k-6)/2}(\parOmega_t)}^2\le \vare E_l(t)+C_\vare$. For the second term, by \eqref{e:mhd}, Lemmas \ref{l:err est 2} and \ref{l:err est 3}, it holds
\begin{align*}
\|\nabla\matder^{l+2-k} p\|_{H^{(3k-6)/2}(\Omega_t)}^2 
\le{}&\|\matder^{l+3-k} v\|_{H^{(3k-6)/2}(\Omega_t)}^2+ \|\sum_{\beta\le l+1-k}\nabla\matder^{\beta}v\star\nabla H\star H\|_{H^{(3k-6)/2}(\Omega_t)}^2\\
&+\|R^{l+1-k}_{\Rbulk}
\|_{H^{(3k-6)/2}(\Omega_t)}^2+\|R^{l+2-k}_{\nabla H,H}\|_{H^{(3k-6)/2}(\Omega_t)}^2\\
\le{}&\|\matder^{l+3-k} v\|_{H^{(3k-6)/2}(\Omega_t)}^2+\vare E_l(t)+C_\vare.
\end{align*}
Combining the above estimates,  \eqref{e:ene est 2} follows.

It remains to verify that $\| v\|_{H^{\lfloor3(l+1)/2\rfloor}(\Omega_t)}^2+\| H\|_{H^{\lfloor3(l+1)/2\rfloor}(\Omega_t)}^2\le \vare E_l(t)+C_\vare \tilde{\energy}_l(t)$. Note that from Lemma \ref{l:B by p} with $l\ge 4$, one has $\|B\|_{H^{3l/2-1}(\parOmega_t)}\le C$ and  $\|B\|_{H^k(\parOmega_t)}\le  C (1+\|p\|_{H^k(\parOmega_t)})$ for $k\in\N/2,k \leq 3l/2$. Then, we can apply the argument as in Proposition \ref{p:ene est 1}. This completes the proof.
\end{proof}

We are ready to prove the main results.
\begin{proof}[Proof of Theorem \ref{t:main t}]
We divide the proof into three parts.

\textbf{Step 1.} We prove the first two statements in Theorem \ref{t:main t}. Assume that the a priori assumptions \eqref{e:a priori} hold for some $T>0$.  

Recalling the estimates in Section \ref{s:p est} that $\barE(0)+\sup_{0\le t< T}\|p\|_{H^3(\Omega_t)}^2\le C$, 
where $C$ depends on $T,\mathcal{N}_T,\mathcal{M}_T, \|v_0\|_{H^6(\Omega_0)},\|H_0\|_{H^6(\Omega_0)}$, and $\|\meancurv_{\parOmega_0}\|_{H^5(\parOmega_0)}$. Then, the assumptions of Proposition \ref{p:ene est 1} hold for any $0\le t<T$, and   Propositions \ref{p:d/dt 2} and \ref{p:ene est 1} allow us to obtain
\begin{equation}\label{e:main th 2}
\frac{d}{dt}\barenergy(t)\le C\barE(t)\le C(1+\barenergy(t)),\quad 0\le t<T. 
\end{equation} 
Integrating over $(0,t)$, we have $\sup_{0\le t< T}\barenergy(t)\le C(1+\barenergy(0))e^{CT}$.
Again by Proposition \ref{p:ene est 1}, we see that 
\begin{equation}\label{e:main th 3}
\sup_{0\le t< T}\barE(t)\le C+C(1+\barenergy(0))e^{CT}\le C+ C\barE(0)e^{CT}\le\bar{C}_0,
\end{equation}
where $\bar{C}_0=\bar{C}_0\left( T, \mathcal{N}_T,\mathcal{M}_T,\|v_0\|_{H^6(\Omega_0)},\|H_0\|_{H^6(\Omega_0)},\|\meancurv_{\parOmega_0}\|_{H^5(\parOmega_0)}\right) $. 

With $\sup_{0\le t< T}(\barE(t)+\|p\|_{H^3(\Omega_t)}^2)\le \bar{C}_0$, applying Lemma
\ref{l:B by p} and  the trace theorem, it follows that $\|B\|_{H^{9/2}(\parOmega_t)}^2 
\le C(1+\|H\cdot\nabla H-\matder v\|_{H^{4}(\Omega_t)}^2)
\le C(\bar{C}_0)$,
giving $\|B\|_{H^{9/2}(\parOmega_t)}^2+\|p\|_{H^{5}(\Omega_t)}^2\le C(\bar{C}_0)$. We proceed to find that $$\|p\|_{H^{11/2}(\Omega_t)}^2\le C(1+\|\nabla p\|_{H^{9/2}(\Omega_t)}^2) \le C(1+\|H\cdot\nabla H-\matder v\|_{H^{9/2}(\Omega_t)}^2)\le C(\bar{C}_0),$$
and we utilize Lemma
\ref{l:2.12 jul} to obtain $\|B\|_{H^{5}(\parOmega_t)}^2\le C(1+\|p\|_{H^{5}(\parOmega_t)}^2) 
\le C(\bar{C}_0)$.
In particular, it follows that $\|\meancurv\|_{H^{5}(\parOmega_t)}^2\le C(\bar{C}_0)$, and Proposition \ref{p:initial cond} yields $\sum_{k=0}^{3}\|\matder^{3-k}p\|_{H^{3k/2+1}(\Omega_t)}^2\le C$,
where $C=C\left( \radi-\|h(\cdot,t)\|_{L^\infty(\Gamma)},\|v\|_{H^6(\Omega_t)},\|H\|_{H^6(\Omega_t)},\|\meancurv\|_{H^5(\parOmega_t)}\right)$. Combining the above estimates,  \eqref{e_t1_1} follows. Then, from the definitions of the material derivative and $\barE(t)$, we can also verify \eqref{e_t1_2}. 

To prove the second result, for $l\ge 4$, we apply Propositions \ref{p:d/dt 2} and \ref{p:ene est 2} by induction to find that: if $\sup_{0\le t< T}E_{l-1}(t)\le C$, then it follows that $\frac{d}{dt}\energy_l(t)\le CE_l(t)\le C(1+\energy_l(t))$.
Similarly, we integrate over $(0,t)$ and use Proposition \ref{p:ene est 2} again to obtain $\sup_{0\le t< T}\energy_l(t)\le C(1+\energy_l(0))e^{CT}$,	and
\begin{equation}\label{e:main th 4}
\sup_{0\le t< T}E_l(t)\le C+C(1+\energy_l(0))e^{CT}\le C_l\left(l, T,  \mathcal{N}_T,\mathcal{M}_T,\sup_{0\le t< T}E_{l-1}(t),e_l(0)\right).
\end{equation}
However, the induction argument implies that \eqref{e:main th 4} holds for all $l$ and the constant $C_l$ which depends on $l,T, \mathcal{N}_T, \mathcal{M}_T,e_l(0)$ and $\barenergy(0)$ from \eqref{e:main th 3}. Note that $\barenergy(0)+\energy_l(0)\le CE_l(0)$, and the constant $C_l$ in fact depends on $l,T, \mathcal{N}_T, \mathcal{M}_T$, and $E_l(0)$. This completes the proof of our claim. Again by the definition of the material derivative, \eqref{e_t1_4} follows. 

\textbf{Step 2.} We prove the last statement in Theorem \ref{t:main t}, i.e., the a priori assumptions \eqref{e:a priori} hold for some time $T_0\ge c_0>0$, where $c_0$ depends on $\mathcal{M}_0,\|v_0\|_{H^6(\Omega_0)},\|H_0\|_{H^6(\Omega_0)}$ and $\|\meancurv_{\parOmega_0}\|_{H^5(\parOmega_0)}$.  	To this aim, we define 
$$I(t) \coloneqq \|B\|_{H^3(\parOmega_t)}^2+\|p\|_{H^3(\Omega_t)}^2+\|v\|_{H^4(\Omega_t)}^2+\|H\|_{H^4(\Omega_t)}^2+1,\quad t\ge 0.$$
Suppose that it holds $I(t)\leq 2I(0)$ and $\mathcal{M}_t \geq \mathcal{M}_0/2$ for some $t>0$, where $\mathcal{M}_0=\radi-\|h_0\|_{L^\infty(\Gamma)}$. Then we have  $\|\meancurv_{\parOmega_t}\|_{H^3(\parOmega_t)}^2\le  C(I(0))$. Thus, applying Lemma \ref{l:bou reg}, one has $\|h(\cdot,t)\|_{H^{3+\delta}(\Gamma)}\le C$, for $\delta>0$ small enough, where the constant $C$ depends on $\|\meancurv_{\parOmega_t}\|_{H^{1+\delta}(\parOmega_t)}$, and hence on $I(0)$. An application of Proposition \ref{p:ene est 1} allows us to obtain that there exists a constant $C$, depending on $I(0)$ and $\mathcal{M}_0$ such that 
\begin{equation}\label{e:main th 5}
\barE(t)\le C(1+\barenergy(t)).
\end{equation}
From the above argument, we define $T_0 \in (0, 1]$ to be the largest number such that
\begin{equation}\label{e:time T0}
[0,T_0]\subset\left\lbrace t\in[0,1]: I(t)\ge I(0)/2,\mathcal{M}_t \ge \mathcal{M}_0/2 ,\text{ and } \barenergy(t)\le 1+\barenergy(0)\right\rbrace .
\end{equation}
Here, we assume that $T_0<1$, since the claim would be trivial otherwise. We note that the last condition together with  \eqref{e:main th 5} implies that 
\begin{equation}\label{e:main th 6}
\sup_{0\le t\le T_0}\barE(t)\le C(1+\barenergy(t))\le C(2+\barenergy(0))\le C\barE(0).
\end{equation}
Also, we observe that 
satisfies $\mathcal{N}_{T_0}^2 \leq C \sup_{0\le t <T_0} \barE(t)$,
thanks to the curvature bound $\|B\|_{H^3(\parOmega_t)}\le 2I(0)$. Indeed, from $\tangrad v_n=\tangrad v\cdot \nu-v\star B$, we can bound $\|v_n\|_{H^4(\parOmega_t)}$ by using $\|v\|_{H^4(\parOmega_t)}$ and $\|B\|_{H^3(\parOmega_t)}$. 

The estimate \eqref{e:main th 6} ensures that the a priori assumptions \eqref{e:a priori} hold for time $T=T_0$, and the claim follows once we show that $T_0$ specified in \eqref{e:time T0} has a lower bound $c_0>0$.   
From the definition of $T_0$, at least one of the three conditions has equality. Assume that $I(T_0) = 2I(0)$. Then, it holds $\barE(t) \leq  C \barE(0)$, for all $t \leq T_0$ by \eqref{e:main th 6}. We will show that
\begin{equation}\label{e:main th 7}
\frac{d}{dt}I(t)\le C\barE(t) I(t) \le C\barE(0) I(t).
\end{equation}  
We focus on the computation of the highest-order terms. In fact, Lemma \ref{l:formu 3} yields
\begin{align*}
\frac{d}{dt}\left(  \|\nabla^4v\|_{L^2(\Omega_t)}^2+\|\nabla^4H\|_{L^2(\Omega_t)}^2\right)  
\le{}& \int_{\Omega_t}\nabla^4\matder v\star\nabla^4v+\sum_{|\alpha|\le 3}\nabla^{1+\alpha_1}v\star\nabla^{1+\alpha_2}v \star\nabla^4vdx\\
&+\int_{\Omega_t}\nabla^4\matder H\star\nabla^4H+\sum_{|\alpha|\le 3}\nabla^{1+\alpha_1}v\star\nabla^{1+\alpha_2}H \star\nabla^4Hdx\\ 
\le{}& C\barE(t) I(t).
\end{align*}
Applying Lemmas \ref{l:formu 3} and \ref{l:nab H, nab H},  it is easy to deduce $\frac{d}{dt}\|\nabla^3p\|_{L^2(\Omega_t)}^2\le C\barE(t) I(t)$.
Similarly, we can obtain by Lemma \ref{l:formu 1} that $\frac{d}{dt} \|\tangrad^3B\|_{L^2(\parOmega_t)}^2\le C\barE(t) I(t)$. 
By integrating \eqref{e:main th 7} over $(0,T_0)$ and using $I(T_0) = 2 I(0)$, we obtain $\ln 2=\ln I(T_0)-\ln I(0)\le CT_0\barE(0)$.
Then we have $T_0\ge C/\barE(0)=c_0,$
where the constant $c_0$  depends on $I(0),\mathcal{M}_0$, and $\barE(0)$. Moreover, by Lemma \ref{l:B by p} and Proposition \ref{p:initial cond}, the constant $c_0$ depends only on $\mathcal{M}_0,\|v_0\|_{H^6(\Omega_0)},  \|H_0\|_{H^6(\Omega_0)}$ and $\|\meancurv_{\parOmega_0}\|_{H^5(\parOmega_0)}$. 

A similar argument applies if we have an equality in the third condition, i.e., $\barenergy(T_0)= 1+\barenergy(0)$.  In fact, it follows that $\frac{d}{dt}\barenergy(t)\le C\barE(t)\le C\barE(0)$
by \eqref{e:main th 2} and \eqref{e:main th 6}, and we integrate over $(0,T_0)$ to obtain $1=\barenergy(T_0)-\barenergy(0)\le C\barE(0)T_0$.
This results in $T_0 \ge c_0>0$ again. 

Finally, we assume that  $\mathcal{M}_{T_0} = \mathcal{M}_0/2$. Recalling that $\mathcal{M}_T = \radi - \sup_{0\le t<T}\|h(\cdot, t)\|_{L^\infty(\Gamma)}$,
and $\mathcal{M}_0>0$, we define $0<T_1 \le T_0$ by $\mathcal{M}_{T_0} =  \radi- \|h(\cdot,T_1)\|_{L^\infty(\Gamma)}$.
It is clear that $\|v_n\|_{L^\infty(\Omega_t)}^2\le C\barE(t) \le C \barE(0)$ by using \eqref{e:main th 6}.  Recalling the fact that $\partial_t h=v_n$,  we have by the fundamental Theorem of calculus that 
 \begin{equation*}
 \mathcal{M}_{T_0}=\radi-\|h(\cdot,T_1)\|_{L^\infty(\parOmega)}\ge  \radi-\|h_0\|_{L^\infty(\parOmega)}-\int_0^{T_1} \|v_n\|_{L^\infty(\Omega_t)} dt\ge \mathcal{M}_{0}- C\barE(0)^{\frac 12} T_1,
 \end{equation*} 
which means $T_0\ge T_1\ge C\mathcal{M}_{0}/\barE(0)^{1/2}>0$. This concludes the claim.  

\textbf{Step 3.} Finally, we prove that the smooth solution does not develop singularities at time $T$. According to the a priori assumptions, the estimates \eqref{e_t1_1} and \eqref{e_t1_3} hold. In particular, we conclude by Lemmas \ref{l:B by p} and \ref{l:bou reg} that the regularity of the curvature implies the regularity of the free boundary, i.e.,   $\parOmega_T\in C^\infty$. Additionally, the quantitative regularity estimates show that the time derivatives of arbitrary order of the velocity and magnetic field are smooth, i.e., belong to $\in C^\infty(\Omega_T)$.  
This completes the proof of the theorem. 
\end{proof}
Finally, we prove the blow-up classification in Theorem \ref{t:main t2}.
\begin{proof}[Proof of Theorem \ref{t:main t2}]
We prove this by contradiction. Assume that $T_*<\infty$, i.e., $v(\cdot,T_*),H(\cdot,T_*)\notin H^6(\Omega_{T_*})$ or $\parOmega_{T_*}\notin H^7$. Assume further that none of (1)–(4) hold. That is, $\inf_{0\le t<T_*}\radi(\Omega_t)>0,\parOmega_t\in H^{3+\delta},0\le t\le T_*$, and $\sup_{0\le t< T_*}
(\|\nabla v\|_{H^{3}(\Omega_t)}+\|\nabla H\|_{H^{3}(\Omega_t)}+\|v_n\|_{H^{4}(\parOmega_t)})<\infty$, where we have applied Lemma \ref{l:bou reg} and the fact that $v_n=V_{\parOmega_t}$. In particular, $\radi(\Omega_{T_*})>0$ and we choose $\parOmega_{T_*}=\partial\Omega_{T_*}$ as the reference surface to represent the free boundary over a short time interval before $T_*$. More precisely, the height function $h(\cdot,t)$ is well-defined on $\left[T_*-\vare,T_*\right)$ for sufficiently small $\vare>0$ and one has $\sup_{\left[T_*-\vare,T_*\right)}\|h\|_{H^{3+\delta}(\parOmega_{T_*})}<\infty$. Therefore, it holds that 
\begin{equation*}
\sup_{T_*-\vare\le t< T_*}(\|h\|_{H^{3+\delta}(\parOmega_{T_*})}+
\|\nabla v\|_{H^{3}(\Omega_t)}+\|\nabla H\|_{H^{3}(\Omega_t)}+\|v_n\|_{H^{4}(\parOmega_t)})<\infty.
\end{equation*}
Applying the low-order estimates in Theorem \ref{t:main t}, it follows that $v(\cdot,T_*),H(\cdot,T_*)\in H^6(\Omega_{T_*})$ and $ \parOmega_{T_*}\in H^7$ and the solution can be extended for some time. This leads to a contradiction and the proof is complete. 
\end{proof}
\section{Further discussions of Theorem \ref{t:main t2}}\label{s:discuss}
In the blow-up classification given in Theorem \ref{t:main t2}, the first two scenarios concern the geometric behavior of the free boundary. In the final section of this manuscript, we explore the connection between the self-intersection singularity in case (1) and the curvature blow-up in case (2). 

To quantitatively characterize how close the free boundary is to self-intersection, we adopt the concept of the injectivity radius $\iota_0$ of the normal exponential map, as introduced in \cite{Christodoulou2000}. Specifically, $\iota_0(t)$ is defined as the largest positive number such that the map
\begin{equation*}
\parOmega_t\times(-\iota_0(t),\iota_0(t))\rightarrow\{ y\in\mathbb{R}^3: \operatorname{dist}(y,\parOmega_t)<\iota_0(t) \}\ \text{given by}\ (x, \iota) \mapsto x + \iota \nu(x),
\end{equation*}
is an injection.

By combining a lower bound on the injectivity radius $\iota_0(t)$ with an upper bound on the second fundamental form $B_{\parOmega_t}$, which measures the curvature, one can derive a positive lower bound for the uniform interior and exterior ball radius via \cite[Lemma 1]{Ginsberg2021}. Specifically, if there exists a constant $K>0$ such that
\begin{equation}\label{inj-B bound}
\frac{1}{\iota_0(t)}+\|B_{\parOmega_t}\|_{L^\infty(\parOmega_t)}\le K, 
\end{equation}
then there exists $r=r(K)>0$ such that $\radi(\Omega_t)\ge r$. Consequently, if condition \eqref{inj-B bound} holds uniformly for all $t\in \left[0,T_*\right)$, i.e.,
\begin{equation*} 
\sup_{t\in \left[0,T_*\right)}\left(\frac{1}{\iota_0(t)}+\|B_{\parOmega_t}\|_{L^\infty(\parOmega_t)}\right)\le K,
\end{equation*}
then the self-intersection singularity will be excluded. 

However, a uniform upper bound on the second fundamental form alone does not, in general, guarantee a uniform positive lower bound for the injectivity radius $\inf_{t \in \left[0,T_*\right)} \iota_0(t)$ or for the uniform interior and exterior ball radius $\inf_{t \in \left[0,T_*\right)} \radi(\Omega_t)$.

In fact, there exist surfaces whose curvature remains uniformly bounded while their injectivity radius tends to zero. Such configurations were employed by Coutand and Shkoller \cite{Coutand2019} to construct initial domains for the viscous water wave equations that lie sufficiently close to self-intersection (see Fig.\;2 and Fig.\;3 in \cite{Coutand2019}), together with divergence-free initial velocity fields that drive the boundary toward self-intersection. 
Notably, the curvature either remains unchanged or undergoes only minimal variation during the deformation that leads to the self-intersection in finite time. Similar constructions were later developed by Hong, Luo, and Zhao \cite{Hong2025} in the context of the viscous and non-resistive incompressible MHD equations.

Moreover, there exist surfaces for which the curvature becomes unbounded while the injectivity radius simultaneously tends to zero. To illustrate this, consider a dumbbell-shaped surface whose connecting neck is gradually squeezed and thinned. As this constriction intensifies, the curvature tends to infinity, and the interior ball radius approaches zero. A natural and interesting question is whether one can construct a class of regular solutions to system \eqref{e:mhd} based on such special geometric configurations, where the curvature of the free boundary blows up and the boundary simultaneously approaches self-intersection within a finite time.
 
\appendix 

\section{Some estimates and formulas}\label{app:1}
\begin{lemma}\label{l:formu 1} 
For a smooth function $f$, it holds
\begin{enumerate}[label={\rm (\arabic*)}]
\item $[\matder,\partial_i]f=- \partial_i v^k \partial_k f,
[\matder,\tangrad]f=-(\tangrad v)^\top\tangrad f,[\matder,\tangrad^2]f=\tangrad^2 v\star \tangrad f+\tangrad v\star \tangrad^2 f,[\tangrad,\nabla]f=\nabla f\star \nabla \nu\star \nu,[\matder,\surflaplace]f=\tangrad^2f\star\nabla v-\tangrad f\cdot \surflaplace v+B\star \nabla v\star \tangrad f,[\partial_\nu,\partial_k] u=-\nabla u\cdot\partial_k \nu$.
\item $\matder \nu=-(\tangrad v)^\top \nu=-\tangrad v_n+B v_\sigma,\tangrad v_n=\tangrad v^\top \nu+B_\parOmega v_\sigma, \matder B=-\tangrad^2v\star\nu-\tangrad v\star B$. 
\end{enumerate} 
\end{lemma}	
\begin{proof}
Most formulas can be found in \cite[Section 3.1]{Shatah2008a} and the others follow from direct calculations.
\end{proof}
\begin{lemma}[{\cite[Proposition A.2]{Shatah2008a}}]\label{l:bou reg}
Let $\Omega\subset\mathbb{R}^3$ be a domain such that $\partial\Omega\in H^{s_0},s_0>2$. Suppose $\|\meancurv\|_{H^{s-2}(\parOmega_t)}\le C$ with $s\ge s_0$, then $\partial\Omega\in H^s$.
\end{lemma}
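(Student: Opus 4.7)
The plan is to reduce the statement to a quasilinear elliptic estimate for a local graph representation of the boundary and then run a finite bootstrap. Since $\partial\Omega\in H^{s_0}$ with $s_0>2$, Sobolev embedding on the two-dimensional boundary gives $\partial\Omega\in C^{1,\alpha}$ for some $\alpha\in(0,1)$. Consequently, around each point $p\in\partial\Omega$ one may rotate coordinates so that in a small ball $B=B_r(p)$ the boundary is a graph
\[
\partial\Omega\cap B=\{(x',h(x'))\,:\,x'\in D\subset\mathbb{R}^{2}\},
\]
for some $h\in H^{s_{0}}(D)$ with $\|\nabla h\|_{L^{\infty}(D)}\le 1/2$. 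Covering $\partial\Omega$ by finitely many such coordinate patches and using a partition of unity, the conclusion $\partial\Omega\in H^{s}$ will follow once we prove $h\in H^{s}(D')$ for each such chart, on a slightly smaller domain $D'\Subset D$.

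In these graph coordinates the mean curvature is
\[
\meancurv(h)=\partial_{i}\!\left(\frac{\partial_{i}h}{\sqrt{1+|\nabla h|^{2}}}\right)=\frac{1}{\sqrt{1+|\nabla h|^{2}}}a^{ij}(\nabla h)\partial_{ij}h,
\]
where $a^{ij}(\xi)=\delta_{ij}-\xi_{i}\xi_{j}/(1+|\xi|^{2})$ is smooth in $\xi$ and defines a uniformly elliptic matrix whenever $|\xi|\le 1/2$. Hence $h$ solves a quasilinear uniformly elliptic equation
\[
a^{ij}(\nabla h)\partial_{ij}h=F(\nabla h)\,\meancurv,\qquad F(\xi)=\sqrt{1+|\xi|^{2}},
\]
with coefficients depending on $\nabla h$. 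The right-hand side is bounded in $H^{s-2}$ by hypothesis, and the bootstrap problem is to transfer this regularity to $h$ itself despite the coefficients containing $\nabla h$.

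The bootstrap proceeds in finitely many steps. Suppose we have already established $h\in H^{\sigma}(D_{k})$ for some $\sigma\in[s_{0},s]$ on a nested chain $D\supset D_{1}\supset D_{2}\supset\cdots$. Because $\sigma-1>1>n/2=1$ (with $n=2$), $H^{\sigma-1}$ is a Banach algebra and smooth composition preserves it, so
\[
\|a^{ij}(\nabla h)\|_{H^{\sigma-1}}+\|F(\nabla h)\|_{H^{\sigma-1}}\le C\bigl(1+\|\nabla h\|_{H^{\sigma-1}}\bigr).
\]
By standard interior elliptic regularity for quasilinear operators with $H^{\min(\sigma-1,s-2)}$ coefficients and $H^{\min(\sigma-1,s-2)}$ right-hand side, together with the Kato--Ponce type product estimate, we upgrade
\[
\|h\|_{H^{\sigma'}(D_{k+1})}\le C\bigl(1+\|\meancurv\|_{H^{s-2}},\|h\|_{H^{\sigma}(D_{k})}\bigr),\qquad \sigma'=\min(\sigma+1,s).
\]
Iterating at most $\lceil s-s_{0}\rceil$ times yields $h\in H^{s}(D')$, and summing over the finite cover completes the proof.

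The main technical obstacle is the quasilinear character of the equation in fractional Sobolev scales: one must guarantee that the composition $\xi\mapsto a^{ij}(\xi)$ maps $\nabla h\in H^{\sigma-1}$ into the same space at each stage (which requires $\sigma-1>1$ throughout, hence the assumption $s_{0}>2$), and that the elliptic constant in the regularity estimate depends only on $\|\nabla h\|_{L^{\infty}}$, which is controlled uniformly by the baseline $H^{s_{0}}$ bound via Sobolev embedding. Once these product/composition and elliptic ingredients are in place, the iterative gain of one derivative per step is mechanical.
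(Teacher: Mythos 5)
The paper does not prove this lemma itself -- it cites it as Proposition A.2 of Shatah--Zeng -- so there is no in-paper argument to compare. Your proof is the natural elliptic bootstrap and it is correct. The points you identify are exactly the ones that need to hold: (i) since the boundary is two-dimensional, $s_0>2$ ensures $\nabla h\in H^{\sigma-1}\hookrightarrow C^0$ with $\sigma-1>n/2=1$ at every stage of the iteration, so $H^{\sigma-1}$ is a multiplier algebra, smooth composition $\xi\mapsto a^{ij}(\xi)$ preserves it, and the product of the coefficient factor with $\meancurv$ lands in $H^{\min(\sigma-1,s-2)}$; (ii) shrinking the chart so $\|\nabla h\|_{L^\infty}\le 1/2$ gives ellipticity of $a^{ij}(\nabla h)$ with a constant independent of the step; (iii) elliptic regularity then lifts $h$ by one derivative per step, and the gain is capped at $H^s$ once $\sigma-1\ge s-2$, so the iteration terminates after $\lceil s-s_0\rceil$ steps. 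One minor stylistic remark: carrying out the differentiated-equation bootstrap is marginally cleaner in the divergence form $\partial_i\bigl(\partial_ih/\sqrt{1+|\nabla h|^2}\bigr)=\meancurv$, since the linearization is itself a divergence-form operator with coefficients of the same Sobolev class $H^{\sigma-1}$ and no commutator bookkeeping; but the non-divergence form you wrote works equally well once the commutator terms $\partial a^{ij}\,\partial_{ij}h$ are tracked via the same product estimates. Either way the argument closes.
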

 
Let $u\in L^2(\parOmega)$. We define the space $H^{1/2}(\parOmega)$ via the harmonic extension: 
\begin{equation*}
\|u\|_{H^{1/2}(\parOmega)}\coloneqq\|u\|_{L^2(\parOmega)}+\inf \{ \|\nabla w\|_{L^2(\Omega)}: w\in H^1(\Omega) \text{ and } w=u \text{ on } \parOmega \}=\|u\|_{L^2(\parOmega)}+ \|\nabla v\|_{L^2(\Omega)},
\end{equation*}
where $v\in H^1(\Omega)$ such that $v|_{\parOmega}=u$ in the trace sense and $\laplace v=0$ in the weak sense. We note that for $u\in H^1(\Omega)$, it holds $\|u\|_{H^{1/2}(\parOmega)}\le \|u\|_{L^2(\parOmega)}+\|\nabla u\|_{L^2(\Omega)}$.
Moreover, for $u\in H^2(\Omega)$ and $v\in H^1(\Omega)$ such that $u|_{\parOmega}$ is the trace of $v$ on $\parOmega$, we have $\|\nabla u\|_{L^2(\Omega)}^2\le \|\nabla(u-v)\|_{L^2(\Omega)}^2+\|\nabla v\|_{L^2(\Omega)}^2
\le \|(u-v)\laplace u\|_{L^1(\Omega)}+\|\nabla v\|_{L^2(\Omega)}^2\le \vare\|u-v\|_{L^2(\Omega)}^2+C_\vare\|\laplace u\|_{L^2(\Omega)}^2+\|\nabla v\|_{L^2(\Omega)}^2$, and therefore
\begin{align*}
\|\nabla u\|_{L^2(\Omega)}^2 
&\le \vare\|\nabla(u-v)\|_{L^2(\Omega)}^2+C_\vare\|\laplace u\|_{L^2(\Omega)}^2+\|\nabla v\|_{L^2(\Omega)}^2\\
&\le \vare\|\nabla u\|_{L^2(\Omega)}^2+C( \|\laplace u\|_{L^2(\Omega)}^2+\| u\|_{H^{1/2}(\parOmega)}^2),
\end{align*}
where we have used the fact that $v-u\in H^1_0(\Omega) $ and  Poincar\'{e}'s inequality.
Therefore, we obtain
\begin{equation}\label{e:har ext 2}
\|\nabla u\|_{L^2(\Omega)}\le C( \|\laplace u\|_{L^2(\Omega)}+\|  u\|_{H^{1/2}(\parOmega)}). 
\end{equation}
Moreover, if $v$ is the harmonic extension of $u|_{\parOmega}$, it holds $\|v\|_{H^1(\Omega)}\le C\|u\|_{H^{1/2}(\parOmega)}$, and we have
\begin{equation}\label{e:har ext 3}
\|u\|_{H^1(\Omega)}\le C( \|\laplace u\|_{L^2(\Omega)}+\|  u\|_{H^{1/2}(\parOmega)}). 
\end{equation}
\begin{lemma}[{\cite[Corollary 2.9]{Julin2024}}]\label{l:2.9 jul}
Let $m \in \Nz $ and $\parOmega \subset \mathbb{R}^3$ be a compact $2$-dimensional hypersurface which is $C^{1, \alpha}$-regular such that $\parOmega=\partial \Omega$ and satisfies the condition $(\operatorname{H}_m)$, i.e.,
\begin{equation}\label{e:2.9 H_m}
\|B\|_{L^4(\parOmega)}\le C\text{, if } m=2,\quad \|B\|_{L^\infty(\parOmega)}+\|B\|_{H^{m-2}(\parOmega)}\le C\text{, if } m>2.
\end{equation} 
Then for all  $k, l\in \N/2$ with $k<l \leq m$ and for $q \in[1, \infty]$, it holds $\|u\|_{H^k(\parOmega)} \leq C\|u\|_{H^l(\parOmega)}^\theta\|u\|_{L^q(\parOmega)}^{1-\theta}$,
where $\theta \in[0,1]$ is given by $1=k-\theta(l-1)+(2-2\theta)/q$, and $\|u\|_{H^k(\Omega)} \leq C\|u\|_{H^l(\Omega)}^\theta\|u\|_{L^q(\Omega)}^{1-\theta}$,
where $\theta \in[0,1]$ is given by $1/2=k/3+\theta(1/2-l/3)+(1-\theta)/q$. Moreover,  for $k, l\in\Nz $  with $k<l \leq m,p\in\left[1,\infty\right),q \in[1, \infty]$, it holds $\|\nabla^ku\|_{L^p(\Omega)}\leq C\|u\|^\theta_{H^l(\Omega)}\|u\|^{1-\theta}_{L^q(\Omega)}$,
where $\theta \in[0,1]$ is given by $1/p=k/3+\theta(1/2-l/3)+(1-\theta)/q$.
\end{lemma}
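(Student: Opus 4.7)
The plan is to reduce Lemma~\ref{l:2.9 jul} to integer-order Gagliardo--Nirenberg inequalities, namely Lemma~\ref{l:2.8 jul} on the hypersurface and \eqref{e:G-N ineq} in Euclidean space, and then to recover the half-integer cases by real interpolation. The condition $(\operatorname{H}_m)$ from \eqref{e:2.9 H_m}, combined with Lemma~\ref{l:bou reg}, implies that $\parOmega$ is $H^m$-regular and in particular $C^{1,\alpha}$, which guarantees that Lemma~\ref{l:2.8 jul} applies with constants depending only on the quantities bounded by $(\operatorname{H}_m)$. Under the same regularity, a tubular neighborhood of $\parOmega$ can be parameterized by the normal flow, producing a bounded extension operator $E : H^s(\Omega) \to H^s(\mathbb{R}^3)$ for $0 \le s \le m$, with operator norm controlled by the same quantities.

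For the first (boundary) inequality in the case where both $k$ and $l$ are integers, I would apply Lemma~\ref{l:2.8 jul} with $T = u$, $p = r = 2$ and the given $q$. A direct computation of the exponent $\theta$ from $1/p = k/2 + \theta(1/r - l/2) + (1-\theta)/q$ yields exactly $1 = k - \theta(l-1) + (2-2\theta)/q$, which is the claimed scaling relation. The half-integer case (either $k \in \frac{1}{2} + \mathbb{Z}$ or $l \in \frac{1}{2} + \mathbb{Z}$) then follows by the real interpolation identity $(H^{s_0}(\parOmega), H^{s_1}(\parOmega))_{\theta_0, 2} = H^s(\parOmega)$ for $s = (1-\theta_0)s_0 + \theta_0 s_1$, which holds on a compact $2$-manifold of sufficient regularity. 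Applying the interpolation functor to the two endpoint integer inequalities and using the interpolation of the $L^q$ target (by the reiteration theorem, trivially since $L^q$ is one of the endpoints) reproduces the Gagliardo--Nirenberg exponent relation for non-integer $k, l$ with the same form of the constraint.

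For the second (bulk) inequality, I would extend $u$ to $Eu \in H^l(\mathbb{R}^3)$ and then apply \eqref{e:G-N ineq} in $\mathbb{R}^3$ with parameters $(s_1, p_1) = (l, 2)$, $(s_2, p_2) = (0, q)$, and target $(s, p) = (k, 2)$; the ambient dimension $n = 3$ produces the scaling relation $1/2 - k/3 = \theta(1/2 - l/3) + (1-\theta)/q$, which rearranges to the stated identity. The failure condition \eqref{e:G-N cond} is easily checked not to apply in the present parameter regime (e.g.\ for $p_2 = q \ge 1$ in the relevant endpoints). Restricting back to $\Omega$ and using the $H^l$-boundedness of $E$ yields the estimate. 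The third inequality is the same argument with $L^p$ in place of $H^0$ at the target, again producing the right scaling via \eqref{e:G-N ineq} in $\mathbb{R}^3$.

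The main obstacle will be the uniform dependence of the interpolation identity and the extension operator $E$ on the geometric quantities controlled by $(\operatorname{H}_m)$: the integer-order inequalities are cheap, but to carry the constants through real interpolation one must show that a coordinate atlas of $\parOmega$ with metric coefficients bounded in $H^{m-1}$ can be constructed from $\|B\|_{L^\infty} + \|B\|_{H^{m-2}}$. This reduces to a standard localization argument (splitting $\parOmega$ into finitely many graph patches over tangent planes at scale controlled by the second fundamental form and pulling back the Bessel-potential norms), which is purely geometric and independent of the evolution problem. Once this uniformity is established, the three claims are immediate consequences of the corresponding Euclidean Gagliardo--Nirenberg estimates by matching exponents.
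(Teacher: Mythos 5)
This lemma is imported verbatim from \cite[Corollary~2.9]{Julin2021} and the present paper gives no proof of it, so there is no internal argument to compare against; what you have produced is a reconstruction of how one might derive it, and the question is only whether that reconstruction is sound.

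The overall plan (integer case from Lemma~\ref{l:2.8 jul} plus extension and a Euclidean Gagliardo--Nirenberg estimate, half-integer case by interpolation) is the right shape, and your exponent bookkeeping in the boundary case with $p=r=2$ is correct: the relation $1/2 = k/2 + \theta(1/2-l/2)+(1-\theta)/q$ does rearrange to $1 = k - \theta(l-1)+(2-2\theta)/q$. However, there is a genuine gap in the bulk case. You apply \eqref{e:G-N ineq} with $(s_1,p_1)=(l,2)$, $(s_2,p_2)=(0,q)$, $(s,p)=(k,2)$ and claim this ``produces'' the scaling $1/2 = k/3 + \theta(1/2-l/3)+(1-\theta)/q$. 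It does not. The Brezis--Mironescu result \eqref{e:G-N ineq} is a pure \emph{interpolation} inequality: its exponent constraints are $s=\theta s_1+(1-\theta)s_2$ and $1/p=\theta/p_1+(1-\theta)/p_2$, neither of which sees the ambient dimension. With your choices both constraints determine $\theta$, forcing $\theta=k/l$ and then $q=2$; in particular \eqref{e:G-N ineq} alone cannot deliver the statement for arbitrary $q\in[1,\infty]$. The dimension-3 scaling in Lemma~\ref{l:2.9 jul} is the genuine Gagliardo--Nirenberg relation, which one obtains either by invoking the classical Gagliardo--Nirenberg inequality in $\mathbb{R}^3$ directly, or by interpolating to an intermediate space $W^{\theta l,p'}(\mathbb{R}^3)$ via \eqref{e:G-N ineq} and then using the (critical) Sobolev embedding $W^{\theta l,p'}(\mathbb{R}^3)\hookrightarrow H^k(\mathbb{R}^3)$; the embedding step is precisely where the $k/3$, $l/3$ terms enter and you have omitted it.

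A second, smaller issue is your real-interpolation passage to half-integers. For half-integer $k$ and integer $l$ the argument works: interpolate $H^k$ between $H^{k-1/2}$ and $H^{k+1/2}$, apply the two integer estimates, and verify that the resulting exponent $(\theta_0+\theta_1)/2$ satisfies the stated linear relation (it does, since the constraint is affine in $(k,\theta)$). For half-integer $l$, however, you cannot simply ``interpolate the target'': the integer estimate with $\lfloor l\rfloor$ gives a larger exponent $\theta''$ and hence a weaker bound, so it does not imply the claimed inequality with the GN exponent $\theta$. You would instead need to run the whole argument (extension plus GN in $\mathbb{R}^3$ plus Sobolev embedding) with fractional $l$ directly, which is available from Brezis--Mironescu for real $s_1$, but this is a different route than interpolating the two integer endpoints. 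This should be stated explicitly; as written, the interpolation step is only justified for half-integer $k$.
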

\begin{lemma}[\cite{Cheng_2016, Kato1988}]\label{l:Kato Ponce} For $f,g\in C_0^\infty(\mathbb{R}^n)$ and $2\le p_1,q_2<\infty,2\le p_2,q_1\le \infty$ with $1/p_1+1/q_1=1/p_2+1/q_2=1/2$, we have for all $k\in \N/2$, $\|fg\|_{H^k}\le C(\|f\|_{W^{k,p_1}}\|g\|_{L^{q_1}}+	\|g\|_{W^{k,q_2}}\|f\|_{L^{p_2}})$.
\end{lemma} 
\begin{lemma}[{\cite[Proposition 2.10]{Julin2024}}]\label{l:2.10 jul}
Assume $\partial\Omega$ is $C^{1, \alpha}$-regular and satisfies $(\operatorname{H}_m)$ defined in \eqref{e:2.9 H_m}. Then for all $k\in \N/2, k \leq m$, it holds $\|f g\|_{H^k(\partial\Omega)} \leq C(\|f\|_{H^k(\partial\Omega)}\|g\|_{L^{\infty}(\partial\Omega)}+\|f\|_{L^{\infty}(\partial\Omega)}\|g\|_{H^k(\partial\Omega)})$,
and $\|f g\|_{H^k(\Omega)} \leq C(\|f\|_{H^k(\Omega)}\|g\|_{L^{\infty}(\Omega)}+\|f\|_{L^{\infty}(\Omega)}\|g\|_{H^k(\Omega)})$. Moreover, assume that $\|B\|_{L^4} \leq C$ and $k \in \Nz $. Then for $p_1, p_2, q_1, q_2 \in[2, \infty]$ with $p_1, q_2<\infty,1/p_1+1/q_1=1/p_2+1/q_2=1/2$,
we have $\|f g\|_{H^k(\parOmega)} \leq C(\|f\|_{W^{k, p_1}(\parOmega)}\|g\|_{L^{q_1}(\parOmega)}+\|f\|_{L^{p_2}(\parOmega)}\|g\|_{W^{k, q_2}(\parOmega)})$.
\end{lemma}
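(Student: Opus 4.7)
The plan is to reduce every claim to the Euclidean Kato–Ponce inequality (Lemma \ref{l:Kato Ponce}) together with the Gagliardo–Nirenberg interpolations of Lemmas \ref{l:2.8 jul} and \ref{l:2.9 jul}. I first treat integer $k\le m$. Applying the Leibniz rule,
\[
\tangrad^k(fg)=\sum_{i=0}^k\binom{k}{i}\tangrad^i f\star\tangrad^{k-i}g,
\]
and then Hölder's inequality with the paired exponents $p_i=2k/i$ and $q_i=2k/(k-i)$ (so that $1/p_i+1/q_i=1/2$), the endpoints $i=0$ and $i=k$ already deliver the target terms. For $0<i<k$, Lemma \ref{l:2.8 jul} yields $\|\tangrad^i f\|_{L^{p_i}(\parOmega)}\le C\|f\|_{H^k(\parOmega)}^{i/k}\|f\|_{L^\infty(\parOmega)}^{1-i/k}$, and symmetrically for $g$; Young's inequality with conjugate exponents $k/i$ and $k/(k-i)$ then bundles the cross terms into $\|f\|_{H^k}\|g\|_{L^\infty}+\|f\|_{L^\infty}\|g\|_{H^k}$. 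The estimate on $\Omega$ is obtained identically, using instead the interior Gagliardo–Nirenberg inequality in Lemma \ref{l:2.9 jul}. The third claim is proved in the same vein: in the Leibniz expansion, interpolate $\|\tangrad^i f\|_{L^{r_i}}$ between $W^{k,p_1}$ and $L^{p_2}$, and $\|\tangrad^{k-i}g\|_{L^{s_i}}$ between $W^{k,q_2}$ and $L^{q_1}$, with $r_i,s_i$ chosen so that $1/r_i+1/s_i=1/2$, and then apply Hölder followed by Young.

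For half-integer $k\in\frac{\N}{2}\setminus\N$ with $k\le m$, the hypothesis $(\operatorname{H}_m)$ combined with Lemma \ref{l:bou reg} upgrades $\parOmega$ to a uniformly $H^m$-regular hypersurface. This lets me choose a finite atlas of local charts—most conveniently, graphs over the smooth reference surface $\Gamma$ of Section~\ref{s:int} via the height function—whose transition maps and metric coefficients are controlled purely by the curvature bound. Pulling $f$ and $g$ back through these charts preserves the norms $\|\cdot\|_{H^k(\parOmega)}$ and $\|\cdot\|_{L^\infty(\parOmega)}$ up to geometric constants, and Lemma \ref{l:Kato Ponce} applied on $\mathbb{R}^2$ with $(p_1,q_1,p_2,q_2)=(2,\infty,\infty,2)$ supplies the chartwise estimate, which is summed against a subordinate partition of unity. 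For the $\Omega$-version at half-integer $k$, I would instead apply a Stein-type total extension operator—available because the boundary curvature is bounded in $L^\infty$ and in $H^{m-2}$—to produce $\tilde f,\tilde g\in H^k(\mathbb{R}^3)\cap L^\infty(\mathbb{R}^3)$ with $\|\tilde f\|_{H^k(\mathbb{R}^3)}+\|\tilde f\|_{L^\infty(\mathbb{R}^3)}\le C(\|f\|_{H^k(\Omega)}+\|f\|_{L^\infty(\Omega)})$, then invoke Lemma \ref{l:Kato Ponce} on $\mathbb{R}^3$ and restrict back to $\Omega$.

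The main obstacle will be tracking the quantitative dependence of constants on the geometric data in the half-integer case: one must verify that the charts, partition of unity, and extension operator depend only on $\|B\|_{L^\infty(\parOmega)}+\|B\|_{H^{m-2}(\parOmega)}$, not on any higher-order regularity of $\parOmega$. The cleanest way to arrange this is to fix $\Gamma$ once and for all and to quantify $\parOmega-\Gamma$ entirely through the height function $h$, whose $H^m(\Gamma)$-norm is controlled via Lemma \ref{l:bou reg}; commutators between the intrinsic $\tangrad$ on $\parOmega$ and $\tangrad$ on $\Gamma$ are then polynomials in $B$ and $h$ with uniformly bounded coefficients. All remaining steps—the Hölder pairings, Young's inequalities, and the integer-case Leibniz expansion—are routine once this geometric reduction is in place.
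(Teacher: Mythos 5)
The paper does not prove this lemma: it is imported verbatim from \cite[Proposition 2.10]{Julin2021}, so there is no internal proof to compare against. Your reconstruction of the integer case is the classical Moser argument (Leibniz rule, Gagliardo--Nirenberg from Lemma~\ref{l:2.8 jul}/\ref{l:2.9 jul} with $\theta=i/k$, Hölder with conjugate pair $p_i=2k/i$, $q_i=2k/(k-i)$, then Young), and the same scheme does carry the third claim with the conjugate pairs $(p_1,q_1)$ and $(p_2,q_2)$ in place of $(2,\infty)$ and $(\infty,2)$; I checked that the chosen intermediate exponents $r_i,s_i$ do satisfy $1/r_i+1/s_i=1/2$, so the Hölder step closes. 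That part is correct and standard.

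The half-integer case is where the proposal drifts from the route the paper's framework points toward, and where the remaining gap lies. You propose a chart-based pullback for $\parOmega$ and a Stein extension for $\Omega$, but the assumption $(\operatorname{H}_m)$ controls only $\|B\|_{L^\infty}+\|B\|_{H^{m-2}(\parOmega)}$ (and for $m=2$ merely $\|B\|_{L^4}$), which gives chart maps no better than roughly $H^{m}\hookrightarrow C^{m-1-\epsilon}$ on a $2$-dimensional surface. For the top half-integer $k=m-1/2$, the pullback of $H^k(\parOmega)$ through such charts is not obviously bounded, and for $m=2$ the charts are only $C^{1,\alpha}$ while you need an $H^{3/2}$ product estimate --- this is exactly the borderline regime where naive chart reductions fail. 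Note that the paper defines $H^{1/2}(\parOmega)$ \emph{via the harmonic extension}, and this is the hint for the intended argument: one takes the harmonic extensions $\tilde f,\tilde g$ to $\Omega$, observes $\|\tilde f\|_{L^\infty(\Omega)}\le\|f\|_{L^\infty(\parOmega)}$ by the maximum principle, applies the already-established $\Omega$-version (or directly $\|\nabla(\tilde f\tilde g)\|_{L^2(\Omega)}\le\|\tilde f\|_{L^\infty}\|\nabla\tilde g\|_{L^2}+\|\nabla\tilde f\|_{L^2}\|\tilde g\|_{L^\infty}$), and bootstraps to $H^{k+1/2}(\parOmega)$ by one application of $\tangrad$ plus the integer estimate. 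This route needs no chart regularity beyond $C^{1,\alpha}$ and delivers constants depending only on $(\operatorname{H}_m)$, which is the whole point of the quantitative hypothesis. For the $\Omega$-version at half-integers, your extension idea is fine, but you must additionally record that the operator of Lemma~\ref{l:2.1 jul} (or the Stein operator behind it) is bounded $L^\infty(\Omega)\to L^\infty(\mathbb{R}^3)$; without that, the $L^\infty$ factors on the right-hand side do not survive the extension. So: integer part and the general-exponent claim are solid; the half-integer surface estimate as written is not established for small $m$, and the cleaner route is the harmonic-extension one already built into the paper's choice of $H^{1/2}(\parOmega)$.
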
  
\begin{lemma}[{\cite[Proposition 2.12]{Julin2024}}]\label{l:2.12 jul}
Assume that $\parOmega$ is $C^{1, \alpha}$-regular. For every $p \in (1,\infty)$, it holds $\|B_\parOmega\|_{L^p(\parOmega)} \le C(1+ \|\meancurv_\parOmega\|_{L^p(\parOmega)})$.
If in addition $\|B_\parOmega\|_{L^4(\parOmega)} \le C$, then for $k =1/2, 1, 2$, it holds $\|B_\parOmega\|_{H^{k}(\parOmega)} \leq C( 1+ \|\meancurv_\parOmega\|_{H^{k}(\parOmega)})$.
Finally, let $m\in\N/2, m\ge 3$, and assume additionally $\|B\|_{L^\infty(\parOmega)}+\|B\|_{H^{m-2}(\parOmega)}\le C$. Then the above estimate holds for all $k\in\N /2$ with $k\le m$.
\end{lemma}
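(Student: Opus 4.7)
\textbf{Proof proposal for Lemma \ref{l:2.12 jul}.} The plan is to reduce to a local graph representation of $\parOmega$ and then combine Calder\'on--Zygmund for the first claim with Simon's identity (together with the Kato--Ponce product estimates already recorded in Lemma \ref{l:2.10 jul}) for the Sobolev claims. First, since $\parOmega$ is $C^{1,\alpha}$-regular and compact, I would cover it by finitely many coordinate charts in which $\parOmega$ is the graph $x_3=h(x')$ of a $C^{1,\alpha}$ function $h$ over a small ball, with $\|\nabla h\|_{L^\infty}$ as small as we like. In such a chart, a direct computation gives $B_{ij}=a_1(\nabla h)\,\partial_{ij}h$ and $\meancurv=\dive\bigl(\nabla h/\sqrt{1+|\nabla h|^2}\bigr)=a_2(\nabla h)\,\partial_{ij}h\,\delta^{ij}+\text{l.o.t.}$ with smooth $a_1,a_2$ bounded uniformly, so the relation between $B$ and $\meancurv$ is, modulo lower-order nonlinear terms in $\nabla h$, a linear algebraic one between $\nabla^2h$ and its Laplacian-like trace.

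For the first claim, I would combine the local graph representation with the usual $L^p$ Calder\'on--Zygmund estimate $\|\nabla^2h\|_{L^p}\leq C\|\Delta h\|_{L^p}+C\|h\|_{L^p}$ on each chart, absorbing the small $\nabla h$-dependent coefficient into the left-hand side thanks to the smallness of the charts, and then summing over the finite cover. This gives $\|B\|_{L^p(\parOmega)}\leq C(1+\|\meancurv\|_{L^p(\parOmega)})$ for every $p\in(1,\infty)$, with the constant depending only on the $C^{1,\alpha}$ bound.

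For the second claim (the cases $k=\tfrac12,1,2$), the core tool is Simon's identity
\[
\surflaplace B_{ij}=\tangrad_i\tangrad_j\meancurv+\meancurv\,B_{ik}B^k{}_j-|B|^2B_{ij},
\]
together with Codazzi's equation $\tandive B=\tangrad\meancurv$ which encodes the structural constraint that makes the system for $B$ elliptic. For $k=2$, I would apply the standard second-order elliptic estimate $\|B\|_{H^2}\leq C(\|\surflaplace B\|_{L^2}+\|B\|_{L^2})$, bound the Simon right-hand side by $\|\tangrad^2\meancurv\|_{L^2}+C\|B\|_{L^4}^2\|B\|_{L^\infty}$-type terms, and iterate: first establish $\|B\|_{H^{1/2}}\leq C(1+\|\meancurv\|_{H^{1/2}})$ by interpolation between $L^p$ (which we already have) and $H^1$, obtain $\|B\|_{H^1}$ by differentiating the Codazzi equation once, and finally obtain $\|B\|_{H^2}$ through Simon's identity, controlling the cubic term $|B|^2B$ via Lemma \ref{l:2.10 jul} and the Sobolev embedding $H^1(\parOmega)\hookrightarrow L^p(\parOmega)$ on the $2$-dimensional surface. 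The assumption $\|B\|_{L^4}\leq C$ is exactly what makes these product estimates close at the $H^2$ level without invoking the a priori $L^\infty$ bound on $B$.

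For the final statement (half-integer $k\leq m$ assuming $\|B\|_{L^\infty}+\|B\|_{H^{m-2}}\leq C$), I would proceed by induction on $k$: once we have the Sobolev estimate up to order $k-\tfrac12$, differentiate Simon's identity $k-2$ times tangentially and apply Lemma \ref{l:2.10 jul} with one factor in $L^\infty$ (the extra $L^\infty$ hypothesis) to distribute derivatives in the cubic error, noting that by the induction hypothesis all lower-order factors are controlled by $\|B\|_{H^{m-2}}$ and the desired bound on $\meancurv$. The main obstacle is book-keeping the multilinear terms at the half-integer endpoint: the application of Kato--Ponce from Lemma \ref{l:2.10 jul} needs the condition $(\mathrm{H}_m)$ in \eqref{e:2.9 H_m} precisely so that the product estimates are valid at order $k$, and one must verify that the $L^\infty$ factor in each cubic term $|B|^2B$ can always be chosen to be $B$ itself so that no derivative is lost. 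Once this bookkeeping is handled, the elliptic Simon estimate closes the induction for all half-integers $k\in\tfrac{\mathbb{N}}{2}$ with $k\leq m$.
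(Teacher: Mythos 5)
This lemma is cited in the paper as \cite[Proposition 2.12]{Julin2021} without proof, so there is no internal argument to compare against. Your reconstruction is nonetheless a sensible one: the local graph reduction with Calder\'on--Zygmund absorption for the $L^p$ claim, Simon's identity together with the contracted Codazzi equation $\tandive B=\tangrad\meancurv$ for the elliptic system behind the $H^k$ estimates, and induction with Kato--Ponce for the higher half-integer orders are the standard ingredients and almost certainly mirror what is done in the cited reference.

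One genuine flaw, though: your proposed order for establishing the low-order Sobolev estimates is circular. You say you will first obtain $\|B\|_{H^{1/2}}\le C(1+\|\meancurv\|_{H^{1/2}})$ by interpolating between $L^p$ and $H^1$, and only afterwards establish the $H^1$ bound by differentiating Codazzi. Interpolation requires the $H^1$ endpoint to already be available. Moreover, even once you have $\|B\|_{H^1}\le C(1+\|\meancurv\|_{H^1})$, the Gagliardo--Nirenberg interpolation $\|B\|_{H^{1/2}}\lesssim\|B\|_{L^2}^{1/2}\|B\|_{H^1}^{1/2}$ yields a right-hand side involving $\|\meancurv\|_{H^1}^{1/2}$, not the claimed $\|\meancurv\|_{H^{1/2}}$. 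The $H^{1/2}$ case genuinely requires a fractional-order argument, either fractional Calder\'on--Zygmund in the local graph coordinates (which your first paragraph already sets up and which extends painlessly to half-integer scales) or a half-integer elliptic estimate derived directly from Simon's identity in the weak form. In the same vein, the absorption of the cubic error $|B|^2B$ in the $H^2$ estimate using only $\|B\|_{L^4}\le C$ (and without an a priori $L^\infty$ bound, which is only assumed for $k\ge \frac52$) is possible but requires a Gagliardo--Nirenberg step with a sublinear power of $\|B\|_{H^2}$ before Young; your sketch gestures at this via Lemma~\ref{l:2.10 jul} but does not verify that the exponent closes. These are repairable gaps, but they are precisely the places where the proof is not merely bookkeeping.
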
 
\begin{lemma}[{\cite[Lemma 3.5]{Julin2024}}]\label{l:3.5 jul}
Let $\Omega$ be a bounded domain with $\partial\Omega\in C^1$ and $\|B\|_{L^4} \leq C$. Then $\|u\|_{H^2(\Omega)} \leq C( \|\partial_\nu u \|_{H^{1/2}(\partial\Omega)}+\|\nabla u\|_{L^2
(\Omega)}+\|\laplace u\|_{L^2(\Omega)})$. Moreover, $\|\nabla u\|_{L^2
(\Omega)}$ can be replaced by $\|u\|_{L^2(\Omega)}$. 
\end{lemma}  

\section*{Acknowledgments}
Both authors were partially supported by the National Natural Science Foundation of China (NSFC) under Grant No. 12171460. Hao was also partially supported by the CAS Project for Young Scientists in Basic Research under Grant No. YSBR-031 and the National Key R\&D Program of China under Grant No. 2021YFA1000800. The authors also expressed their sincere appreciation to the referees for their insightful suggestions, which significantly improved the quality of this manuscript.

\end{document}